\newcommand{\version}{\today}
\theoremstyle{plain}
\newtheorem{theorem}{Theorem}[section]
\newtheorem{lemma}[theorem]{Lemma}
\newtheorem{proposition}[theorem]{Proposition}
\newtheorem{definition}[theorem]{Definition}
\newtheorem{example}[theorem]{Example}
\theoremstyle{remark}
\newtheorem{remark}[theorem]{Remark}
\def\N{{\mathbb N}}
\def\R{{\mathbb R}}
\def\C{{\mathbb C}}
\renewcommand{\a}{\alpha}
\newcommand{\Ga}{\Gamma}
\newcommand{\eps}{\varepsilon}
\newcommand{\VV}{\mathbf{V}}
\DeclareMathOperator{\tr}{Tr}
\DeclareMathOperator{\dive}{div}
\DeclareMathOperator{\arctanh}{arctanh}
\DeclareMathOperator{\Hess}{Hess}
\newcommand{\beq}{\begin{equation}}
\newcommand{\eeq}{\end{equation}}
\newcommand{\bal}{\begin{aligned}}
\newcommand{\eal}{\end{aligned}}
\newcommand{\ben}{\begin{enumerate}}
\newcommand{\beni} {\begin{enumerate}[(i)]}
\newcommand{\een}{\end{enumerate}}
\newcommand{\bit}{\begin{itemize}}
\newcommand{\eit}{\end{itemize}}
\newcommand{\beqw}{\begin{equation*}}
\newcommand{\eeqw}{\end{equation*}}
\newcommand{\bthm}{\begin{theorem}}
\newcommand{\ethm}{\end{theorem}}
\newcommand{\bpr}{\begin{proposition}}
\newcommand{\epr}{\end{proposition}}
\newcommand{\ble}{\begin{lemma}}
\newcommand{\ele}{\end{lemma}}
\newcommand{\blem}{\begin{lemma}}
\newcommand{\elem}{\end{lemma}}
\newcommand{\bpf}{\begin{proof}}
\newcommand{\epf}{\end{proof}}
\newcommand{\bex}{\begin{example}}
\newcommand{\eex}{\end{example}}
\newcommand{\bre}{\begin{example}}
\newcommand{\ere}{\end{example}}
\newcommand{\bma}{\begin{bmatrix}}
\newcommand{\ema}{\end{bmatrix}}
\newcommand{\dd}{{\,\rm d}}
\newcommand{\ddt}{\frac{\mathrm{d}}{\mathrm{d}t}}
\newcommand{\ddtt}{\frac{\mathrm{d^2}}{\mathrm{d}t^2}}
\renewcommand{\aa}{{\boldsymbol\alpha}}
\renewcommand{\hat}{\widehat}
\newcommand{\ot}{\otimes}
\newcommand{\hN}{\widehat\Num}
\newcommand{\bip}[1]{\big\langle {#1}\big\rangle_{L^2(\tau)}}
\renewcommand{\phi}{\varphi}
\newcommand{\cA}{\mathcal{A}}
\newcommand{\cB}{\mathcal{B}}
\newcommand{\rr}{\mathbf{r}}
\newcommand{\nn}{\mathbf{n}}
\newcommand{\mm}{\mathbf{m}}
 \newcommand{\diff}[2]{\frac{\mathrm{d}{#1}}{\mathrm{d}{#2}}}
 \renewcommand{\AA}{\mathbf{A}}     
 \newcommand{\BB}{\mathbf{B}}   
 \newcommand{\Cl}{\mathfrak{C}}     
 \newcommand{\Cln}{\Cl^n}           
 \newcommand{\Clo}{\Cl_0}
 \newcommand{\contr}{*}            
 \newcommand{\Ent}{S}               
 \newcommand{\Dens}{{\mathfrak P}}
 \newcommand{\Gr}{\Gamma}           
 \renewcommand{\H}{\mathcal{H}}     
 \newcommand{\ip}[1]{\langle {#1}\rangle_{L^2(\tau)}}   
 \newcommand{\ncd}{\partial}        
 \newcommand{\Num}{\mathcal{N}}     
 \newcommand{\trace}{\tau}          
 \newcommand{\hrho}{\widehat{\rho}}
 \newcommand{\Alg}{\mathcal{A}}
 \newcommand{\uu}{\mathbf{u}} 
 \newcommand{\ipo}[1]{\langle {#1}\rangle}   
 \newcommand{\ipr}[1]{\langle {#1}\rangle_{\rho}}   
\begin{document}

\markboth{\scriptsize{CM \version}}{\scriptsize{CM \version}}

\title
{AN ANALOG OF THE $2$-WASSERSTEIN METRIC IN  
NON-COMMUTATIVE PROBABILITY UNDER WHICH THE
FERMIONIC FOKKER-PLANCK EQUATION IS GRADIENT FLOW
FOR THE ENTROPY}

\author{
\vspace{5pt}  Eric A. Carlen$^{1}$  and Jan Maas$^{2}$ \\
\vspace{5pt}\small{$^{1}$ Department of Mathematics, Hill Center,}\\[-6pt]
\small{Rutgers University,
110 Frelinghuysen Road
Piscataway NJ 08854-8019 USA}\\
\vspace{5pt}\small{$^{2}$ University of Bonn, Institute for Applied Mathematics, }\\[-6pt]
\small{Endenicher Allee 60, 53115 Bonn, Germany.}\\[-6pt]
 }
 
\date{\today}
\maketitle

\footnotetext [1]{Work partially supported by U.S.
National Science Foundation grant  DMS 0901632.}

\footnotetext [2]{Work partially supported by Rubicon subsidy 680-50-0901 of the Netherlands Organisation for Scientific Research (NWO). \hfill\break
\copyright\, 2012 by the authors. This paper may be reproduced, in its
entirety, for non-commercial purposes.
}



 \begin{abstract}
Let $\Cl$ denote the Clifford algebra over $\R^n$, which is the von Neumann algebra generated by $n$ self-adjoint operators $Q_j$,  $j=1,\dots,n$ satisfying the
canonical anticommutation relations, $Q_iQ_j+Q_jQ_i = 2\delta_{ij}I$, and let $\tau$ denote the normalized trace on $\Cl$. This  algebra arises in
quantum mechanics as the algebra of observables generated by $n$ Fermionic degrees of freedom.  Let $\Dens$ denote the set of all positive operators 
$\rho\in\Cl$ such that $\tau(\rho) =1$; these are the non-commutative analogs of probability densities in the non-commutative probability space
$(\Cl,\tau)$. The Fermionic Fokker-Planck equation is a quantum-mechanical analog of the classical Fokker-Planck equation with which it has much in
common, such as the same optimal hypercontractivity properties. In this paper we construct a Riemannian metric on $\Dens$ that we show to be a natural analog
of the classical $2$-Wasserstein metric, and we show that, in analogy with the classical case, the Fermionic Fokker-Planck equation is gradient flow in this metric for 
the relative entropy with respect to the ground state. We derive a number of consequences of this, such as a sharp Talagrand inequality for this metric, and we prove a number of results pertaining to this metric. Several open problems are raised. 
 \end{abstract}


\tableofcontents

\maketitle

\section{Introduction}

Many partial differential equations for the evolution of classical probability densities $\rho(x,t)$ on $\R^n$  can be viewed as 
describing gradient flow with respect to the $2$-Wasserstein metric. This point 
of view is due to Felix Otto, and he and others have shown it to be remarkably effective for gaining quantitative control over the behavior
of such evolution equations. We recall that for two probability densities $\rho_0$ and $\rho_1$ on $\R^n$, both with finite second moments,
the set of {\em couplings} ${\mathcal C}(\rho_0,\rho_1)$ is the set of all probability measures $\kappa$ on $\R^{2n}$ such that for all test functions
$\varphi$ on $\R^n$,
$$\int_{\R^{2n}}\varphi(x){\rm d}\kappa(x,y) = \int_{\R^n} \varphi(x)\rho_0(x){\rm d}x $$
and
$$\int_{\R^{2n}}\varphi(y){\rm d}\kappa(x,y) = \int_{\R^n} \varphi(y)\rho_1(y){\rm d}x\ .$$
That is, a probability measure ${\rm d}\kappa$ on the product space $\R^{2n}$ is in ${\mathcal C}(\rho_0,\rho_1)$ if and only if the first and second 
marginals of ${\rm d}\kappa$ are $\rho_0(x){\rm d}x$ and   $\rho_1(y){\rm d}y$ respectively.
Then the $2$-Wasserstein distance between $\rho_0$ and $\rho_1$, ${\rm W}(\rho_0,\rho_1)$,  is defined by
\begin{equation}\label{Kant}
{\rm W}^2(\rho_0,\rho_1) = \inf_{\kappa\in  {\mathcal C}(\rho_0,\rho_1)}\int_{\R^{2n}}\frac12 |x-y|^2 {\rm d}\kappa(x,y)\ .
\end{equation}
One  may  view the conditional distribution of $y$ under $\kappa$ given $x$,  which is
$\rho_0(x)^{-1}\kappa(x,y){\rm d}y$  if $\kappa$ has a density $\kappa(x,y)$, as a ``transportation plan''  specifying  to
 where the mass at $x$ gets transported, and in what proportions, in a transportation process transforming the mass distribution $\rho_0(x){\rm d}x$ into 
 $\rho_1(y){\rm d}y$.
 The function $|x-y|^2/2$ is interpreted as giving the  cost of moving a unit of mass from $x$ to $y$, and then the  minimum total cost, considering all possible ``transportation plans'',
is the square of the Wasserstein distance. For details and background, see \cite{Vil03}.

In quantum mechanics, classical probability densities are replaced by quantum mechanical density matrices; i.e., 
positive trace class operators $\rho$ on some Hilbert space such that $\tr(\rho) =1$.  These are the analogs of probability densities within the context of
non-commutative probability theory originally due to Irving Segal \cite{S53,S56,S65}. The starting point of his generalization of classical probability theory is the fact that the set of
all complex bounded functions that are measurable with respect to some $\sigma$-algebra, equipped with the complex conjugation as the involution $\ast$, 
form a commutative von Neumann algebra, and any probability measure on this measurable space induces a positive linear functional; i.e., a {\em state}
on the algebra.  In Segal's generalization, one drops the requirement that the von Neumann algebra be commutative. The resulting non-commutative probability
 spaces -- von Neumann algebras with a specified state -- turn out to have many uses, particularly in quantum mechanics, where the $L^2$ spaces built on
 them give a convenient representation of the operators  relevant to the analysis of many physical systems. We shall 
 discuss one example of this in detail below.

If the von Neumann algebra in question is ${\mathcal B}({\mathcal H})$,
the set of all abounded operators on the Hilbert space ${\mathcal H}$,  there is  no obvious non-commutative analog of the $2$-Wasserstein metric. 
One can generalize the notion of a coupling of two density matrices
$\rho_0,\rho_1$ on a Hilbert space ${\mathcal H}$ to be a  density matrix $\kappa$ on ${\mathcal H}\otimes {\mathcal H}$ whose
{\em partial traces} over the second and first factor are $\rho_0$ and $\rho_1$ respectively. Based on this idea, an analog of the Wasserstein metric has been defined by Biane and Voiculescu in the setting of free probability \cite{BV01}. However, in general there is no natural analog of the conditioning operation so that in the general quantum case, there is no natural way to decompose a coupling, via conditioning, into a transportation plan. Moreover, since there is no underlying metric space, there is no obvious analog of the cost function $|x-y|^2/2$. 

However, there are physically interesting evolution equations for density matrices that are close quantum mechanical relatives of classical equations
for which the Wasserstein metric point of view has proven effective. This fact suggests that at least in certain particular non-commutative probability spaces of relevance to
quantum mechanics, there should be a meaningful analog of the $2$-Wasserstein metric.  As we shall demonstrate here, this is indeed the case.

The prime example of such an evolution equation is the {\em Fermionic Fokker-Planck Equation} introduced by Gross \cite{G72,G75}. 
As we explain below, this equation describes the evolution of density matrices belonging to the  {\em operator algebra
generated by $n$ Fermionic degrees of freedom} which  turns out to be a {\em Clifford algebra}.  
In this operator algebra, there is also a differential calculus, and Gross showed that using the operators pertaining to this differential
calculus, one can write the  Fermionic Fokker-Planck Equation in a form that displays it as an almost ``identical twin'' of the classical Fokker-Planck equation.  

As an example of the close parallel between the classical and Fermionic Fokker-Planck equations, consider one of the most significant properties of the evolution described
by the classical equation is its hypercontractive property, expressed in Nelson's sharp hypercontractivity inequality \cite{N73}.  
The exact analog of Nelson's sharp hypercontractivity inequality for the classical Fokker-Planck evolution has been shown to hold for the Fermionic Fokker-Planck 
evolution \cite{CL1},  where it involves non-commutative analogs of the $L^p$ norms in the (non-commutative) operator algebra generated by $n$ Fermionic degree of freedom.

Other significant features of the classical Fokker-Planck evolution have lacked a quantum counterpart. For instance,
as shown by Jordan, Kinderlehrer and Otto \cite{JKO98}, the classical Fokker-Planck Equation for $\rho(x,t)$  is gradient flow in the $2$-Wasserstein metric of  the relative relative entropy of $\rho(x,t)$ with respect to the equilibrium Gaussian measure. Moreover, crucial properties of this evolution, such as its hypercontractive properties, can be deduced from the convexity properties of the relative entropy functional in the $2$-Wasserstein metric. A similar gradient flow structure in the space of probability measures has meanwhile been developed and exploited in many different settings \cite{AGS08,AGS11a,CG04,CMV06,CHLZ11,Erb10,FSS10,GKO10,Ma11,Mie11a,Mie11b,OhSt09,O01}.

The purpose of our paper is to   construct  a non-commutative analog of the $2$-Wasserstein metric,
and to prove  a number of results concerning this metric that further the parallel between the quantum and classical cases.
The first step will be to  construct  the metric, and here, a judicious choice of the point of departure is crucial.
Among the many equivalent ways to define the Wasserstein metric, the one that seems most useful in the non-commutative setting is the dynamical approach of Benamou and Brenier \cite{BB00}. 
In their approach, couplings are defined not in terms of joint probability measures, but in terms of smooth paths $t\mapsto \rho(x,t)$ in the space of probability densities.  Any such path satisfies the continuity equation
\begin{equation}\label{cont}
\frac{\partial}{\partial t} \rho(x,t) + {\rm div}[{\bf v}(x,t)\rho(x,t)] = 0
\end{equation}
for some time dependent vector field ${\bf v}(x,t)$.  A pair $\{\rho(\cdot,\cdot),{\bf v}(\cdot,\cdot)\}$ is said to {\em couple}  $\rho_0$ and $\rho_1$ provided that the pair satisfies (\ref{cont}), $\rho(x,0) = \rho_0(x)$ and $\rho(x,1) = \rho_1(x)$.  
Using the same symbol  ${\mathcal C}(\rho_0,\rho_1)$ to denote the set of couplings between $\rho_0$ and $\rho_1$ in this new sense, 
Benamou and Brenier show that ${\rm W}(\rho_0,\rho_1)$ is given by
\begin{equation}\label{BEBR}
{\rm W}^2(\rho_0,\rho_1) = 
 \inf_{\{\rho,{\bf v}\}\in  {\mathcal C}(\rho_0,\rho_1)}  \frac12\int_0^1 \int_{\R^n}  |{\bf v}(x,t)|^2\rho(x,t)\dd x\dd t\  .
\end{equation}
Moreover, they showed how one can characterize the {\em geodesic paths} for the $2$-Wasserstein metric in terms of solutions of a Hamilton-Jacobi equation, and how this 
characterization of the geodesic paths provides an effective means of investigating the convexity properties of functionals on the space of probability densities with respect to the $2$-Wasserstein metric. 

We may now roughly describe our main results:  Working in an operator algebra setting in which there exists a differential calculus, and hence a divergence, we develop a non-commutative analog of the continuity equation (\ref{cont}) and show how this leads to a non-commutative analog of the Benamou-Brenier formula for the $2$-Wasserstein difference.  Actually, since there are many ways one might try to generalize (\ref{cont}) to the non-commutative setting, we start out by computing a formula for the dissipation of the relative entropy along the Fokker-Planck evolution, and use this to guide us to a suitable generalization of (\ref{cont}).  

With a suitable continuity equation in hand, we proceed to the definition of our Riemannian metric, and prove that the Fermionic Fokker-Planck evolution is gradient flow for the relative entropy
with respect to the ground state in this metric. 
The rest of the paper is then devoted to an investigation of the properties of this new metric. We note that the operator algebra we consider is finite dimensional, and so the metric we investigate is a {\em bona-fide} Riemannian metric. Among our other results, using the known sharp logarithmic Sobolev inequality for the  Fermionic Fokker-Planck equation \cite{CL1}, 
we deduce a sharp Talagrand-type inequality for our metric. 

We begin by recalling some useful background material on the classical and Fermionic Fokker-Planck equations.

\section{The classical and Fermionic  Fokker-Planck equations}

\subsection{The classical Fokker-Planck equation}

The classical  Fokker-Planck equation is 
\begin{equation}\label{FP1}
\frac{\partial}{\partial t} f(t,x) = \nabla\cdot(\nabla + x) f(t,x)\;,
\end{equation}
where $f(x,t)$ is a time dependent probability density on $\R^n$.  Note that the standard Gaussian probability density
\begin{equation}\label{FP2}
 \gamma_n(x) := (2\pi)^{-n/2}e^{-|x|^2/2}
 \end{equation}
 is a steady-state solution. 
 
 Let $f(x,t)$ be a solution of (\ref{FP1}), and define a function $\rho(x,t)$ by
\begin{equation}\label{FP3} 
f(x,t) = \rho(x,t)   \gamma_n(x)\ .
\end{equation}
Then $\rho(x,t)$ satisfies  
\begin{equation}\label{FP4}
\frac{\partial}{\partial t} \rho(t,x) = (\nabla -x)\cdot \nabla \rho(x,t)\;.
\end{equation}
The solution of the Cauchy problem for (\ref{FP3}) with initial data $\rho_0(x)$ is given by {\em Mehler's formula}
\begin{equation}\label{FP5}
\rho(x,t) = \int_{\R^n} \rho_0\left(e^{-t}x + (1- e^{-2t})^{1/2}y\right)\gamma_n(y)\dd y\ .
\end{equation}
(A simple computation shows that (\ref{FP5}) does indeed define the solution of (\ref{FP4}) with the right initial data.)

The Mehler semigroup is the  semigroup on $L^2(\R^n,\gamma_n(x){\rm d}x)$ consisting of the operators 
$$P_t\varphi(x) = \int_{\R^n} \varphi\left(e^{-t}x + (1- e^{-2t})^{1/2}y\right)\gamma_n(y)\dd y\ .$$
Each of these operator is {\em Markovian}; i.e., positivity preserving with $P_t1=1$. Hence the Mehler semigroup is a Markovian semigroup and 
the associated {\em Dirichlet form} is the non-negative quadratic form
\begin{equation}\label{BOS}
{\mathcal B}(\varphi,\varphi) := \lim_{t\to0}\frac{1}{t} \int_{\R^n} \varphi(x) [\varphi(x) - P_t\varphi(x)] \gamma_n(x){\rm }\dd x =  \int_{\R^n}|\nabla \varphi(x)|^2 \gamma_n(x)\dd x\ .
\end{equation}
The positive operator
$$N := -(\nabla - x) \cdot\nabla $$
satisfies
$${\mathcal B}(\psi,\varphi) = \langle  \psi,N\varphi\rangle_{L^2(\gamma_n\dd x)} $$
for all smooth, bounded $\psi$ and $\varphi$, and then the domain of self-adjointness is given by the Friedrich's extension. 
The spectrum of $N$ consists of the non-negative integers;
 its eigenfunctions are the Hermite polynomials. Since the corresponding eigenvalue is the degree of the  Hermite polynomial,
the operator $N$ is sometimes referred to as the {\em number operator}. By what we have said above, $N$ is the generator of the Mehler semigroup; i.e., 
   $P_t := e^{-tN}$, $t \geq 0$.

There is a close connection between the  Fokker-Planck equation and entropy. 
Given a probability density $f(x)$ with respect to Lebesgue measure on $\R^n$, the {\em relative entropy of $f$ with respect to $\gamma_n$}
is the quantity $H(f|\gamma_n)$ defined by
\begin{align*}
H(f|\gamma_n) &= \int_{\R^n}\left(\frac{f}{\gamma_n}\right)\log  \left(\frac{f}{\gamma_n}\right) \gamma_n(x)\dd x\\
&=  \int_{\R^n} f \log f (x) \dd x + \frac12 \int_{\R^n} |x|^2 f(x) \dd x  + \frac{n}{2} \log(2\pi)\ .
\end{align*}
Notice that if $f(x) =  \rho(x)\gamma_n(x)$, then
$$
H(f|\gamma_n)  = \int_{\R^n} \rho(x)\log \rho(x) \gamma_n(x)\dd x\ .$$

As we have mentioned above,  it has been shown relatively recently 
by  Jordan, Kinderlehrer and Otto \cite{JKO98} that the Fokker-Planck equation may be viewed as the gradient flow of
the relative entropy  with respect to the reference measure $\gamma_n(x)\dd x$ when the space of probability measures on $\R^n$
is equipped with a Riemannian structure induced by the $2$-Wasserstein metric, and further work has shown that many properties of the classical Fokker-Planck evolution can be deduced from the strict uniform convexity  of the relative entropy function along the geodesics  for the $2$-Wasserstein metric (see, e.g., \cite{AGS08, Vil09}). 

To explain the close connection between the classical Fokker-Planck equation, entropy, and the $2$-Wasserstein metric, we first write the Fokker-Flanck equation (\ref{FP1})
as a continuity equation. Note that   (\ref{FP1}) can be written as
\begin{equation}\label{FP1A}
\frac{\partial}{\partial t} f(t,x) + {\rm div}[ f(t,x){\bf v}(x,t)] = 0
\end{equation}
where
\begin{equation}\label{FP1B}
{\bf v}(x,t) = -\nabla \log (f(x,t)) - x
\end{equation}
To see that this choice of ${\bf v}(x,t)$ is consistent with (\ref{FP1A}),  write the time derivative of $f(x,t)$ as the divergence of a vector field, and then  divide this vector field by $f(x,t)$ to obtain the vector field ${\bf v}(x,t)$.

Given a solution $f(x,t)$ of (\ref{FP1}), there are many vector fields $\widetilde {\bf v}(x,t)$ such that  
\begin{equation}\label{FP1AA}
\frac{\partial}{\partial t} f(t,x) + {\rm div}[ f(t,x)\widetilde{\bf v}(x,t)] = 0\ ,
\end{equation}
but the choice made in (\ref{FP1B}) is special since
$$ \int_{\R^n} |{\bf v}(x,t)|^2 f(x,t){\rm d}x <  \int_{\R^n} |\widetilde {\bf v}(x,t)|^2 f(x,t){\rm d}x$$
for any other vector field  $\widetilde {\bf v}(x,t)$ satisfying (\ref{FP1AA}) for our given solution $f(x,t)$. 
Indeed, the set ${\mathcal K}$ of vector fields $\widetilde {\bf v}$ such that (\ref{FP1AA}) is satisfied is a closed convex set in the obvious Hilbertian
norm, and thus there is a unique norm-minimizing element ${\bf v}_0$. Considering perturbations of  ${\bf v}_0$ of the form
${\bf v}_0 + \epsilon f^{-1}{\bf w}$ where ${\bf w}(x,t)$ is, for each $t$, a smooth compactly supported divergence free vector field, one sees that ${\bf v}_0$ must 
satisfy
$$\int_{\R^n} {\bf v}_0(x,t){\bf w}(x,t){\rm d}x = 0$$
for each $t$, and thus, that ${\bf v}_0(x,t)$ is, for each $t$, a gradient. One then shows that there is only one gradient vector field in ${\mathcal K}$, and hence,
since the vector field ${\bf v}(x,t)$ given in (\ref{FP1B}) is a gradient, it is the minimizer. We only sketch this argument here since we will give all of the details of the analogous
argument in the non-commutative setting shortly.  For further discussion in the classical case, see \cite{CG03}.

Now, from the Benamou-Brenier formula for the Wasserstein distance, and the minimizing property of the vector field ${\bf v}(x,t)$ given in (\ref{FP1B}), we see that
$${\rm W}^2(f(\cdot,t),f(\cdot,t+h)) = \left(\frac12 \int_{\R^n} |{\bf v}(x,t)|^2f(x,t){\rm d}x\right)h^2 +  o(h^2)\ .$$

Next, we compute, using the continuity equation form of the Fokker-Planck equation,
\begin{eqnarray}\label{FP1D}
\frac{{\rm d}}{{\rm d}t}  H(f|\gamma_n) &=& -\int_{\R^n} \left[\log f(x,t) + \frac12 |x|^2 \right]  {\rm div}[ f(t,x){\bf v}(x,t)] {\rm d}x \nonumber\\
&=&\phantom{-} \int_{\R^n} \left[\nabla \log f(x,t) + x \right]  [ f(t,x){\bf v}(x,t)] {\rm d}x \nonumber\\
&=& -  \int_{\R^n} |{\bf v}(x,t)|^2 f(x,t){\rm d}x\ .
\end{eqnarray}

In summary, for solutions $f(x,t)$ of the classical Fokker-Planck equation, one has
\begin{equation}\label{EW}
\frac{{\rm d}}{{\rm d}t}  H(f|\gamma_n) = -\left(\lim_{h\to 0}  \frac{  {\rm W}(f(\cdot,t),f(\cdot,t+h))}{h} \right)^2 \ .
\end{equation}
When we come to the non-commutative case, it will not be so evident how to rewrite the Fermionic Fokker-Planck  equation in continuity equation form. The logarithmic gradient of $f(x,t)$ 
enters in (\ref{FP1B})  because we divided by $f(x,t)$ in the course of deducing the formula (\ref{FP1B}) for ${\bf v}(x,t)$.  In the non-commutative case this division must be done in a rather 
indirect way to achieve the desired result, and we shall arrive at the appropriate division formula by working backwards from a calculation of entropy dissipation.

First, we  introduce the Fermionic Fokker-Planck equation, beginning with a brief  introduction to 
Clifford algebras as non-commutative probability spaces. 

\subsection{The Clifford algebra as a non-commutative probability space}

Let $\H$ be a complex Hilbert space and let $Q_1, \ldots, Q_n$ be bounded operators on $\H$ satisfying the canonical anticommutation relations (CAR)
 \begin{equation}\label{CAR}
  Q_i Q_j + Q_j Q_i = 2 \delta_{ij}I\;.
 \end{equation}
The Clifford algebra $\Cl$ is the operator algebra generated by $Q_1, \ldots, Q_n.$ We say ``the'' Clifford algebra because any two 
realizations are unitarily equivalent. We give a brief introduction to $\Cl$ here. Though fairly self-contained for our purposes, we refer
to \cite{CL1} for more detail and further references.

One  realization of $\Cl$ as an operator algebra may be achieved on the Hilbert space $\H$ that is the 
$n$-fold tensor product of $\C^2$ with itself.  Let
$$Q := \left[\begin{array}{cc} 0 & 1\\ 1 & 0\end{array}\right] \quad{\rm and}\qquad U :=   \left[\begin{array}{cc} 1 & \phantom{-}0\\ 0 & -1\end{array}\right] \ .$$
Then let $Q_j$ the be tensor product of the form
$$X_1\otimes X_2 \cdots \otimes X_n\;,$$
where $X_j = Q$, where $X_{i} = U$ for all $i < j$, and where $X_k = I$, the $2\times 2$ identity matrix, for all $k> j$. Then one readily verifies that the 
canonical anti-commutation relations are satisfied.

There is a natural injection of $\R^n$ into $\Cl$ given by 
\begin{equation}\label{CAN}
x \mapsto J(x) :=  \sum_{j=1}^n x_jQ_j\ .
\end{equation}
One then sees, as a consequence of (\ref{CAR}) that 
$J(x)^2 = |x|^2 I$, which is often taken as the  relation defining $\Cl$.

Let $\tau$ denote the 
normalized trace  on $\Cl$.  That is, if $A$ is any operator on $\H$ belonging to $\Cl$, 
$$\tau(A) = 2^{-n}\tr(A)\ .$$
Evidently if $A$ is positive in $\Cl$, meaning that $A$ has positive spectrum, or what is the same, $A = B^*B$ with $B$ in $\Cl$, then
$\tau(A)\geq 0$. Also evidently $\tau(I) = 1$ where $I$ is the identity in $\Cl$. Thus, $\tau$ is a {\em state} on $\Cl$. It may appear that $\tau$ depends on the particular representation
of the CAR that we are employing but this is not the case:  

An $n$-tuple $\aa = (\a_1, \ldots, \a_n) \in \{0,1\}^n$ is called a Fermionic multi-index. We set $|\aa| := \sum_{j=1}^n \a_j$ and
 \begin{align*}
  Q^\aa := Q_1^{\a_1} \cdots Q_n^{\a_n}\;.
 \end{align*}
One readily verifies that
 \begin{equation}\label{tauun}
   \trace (Q^\aa) = \delta_{0,|\aa|}\;.
 \end{equation} 
Since the $\{Q^\aa\}$ are a basis for $\Cl$, there is at most one state, namely $\tau$,  that satisfies (\ref{tauun}).

As emphasized by Segal \cite{S53,S56,S65}, $(\Cl,\tau)$
is an example of a non-commutative probability space that is a close analog of the standard Gaussian probability space
$(\R^n,\gamma_n(x)\dd x)$ where 
$$ \gamma_n(x) := (2\pi)^{-n/2}e^{-|x|^2/2}\ .$$
For instance, a characteristic property of isotropic Gaussian probability measures on $\R^n$ is that if $V$ and $W$ are two orthogonal subspaces of $\R^n$, and $f$ and $g$ are two functions
on $\R^n$ such that $f(x)$ depends only on the component of $x$ in $V$ and  $g(x)$ depends only on the component of $x$ in $W$, then
\begin{equation}\label{IND}
\int_{\R^n} f(x)g(x)\gamma_n(x){\rm d}x = \left(\int_{\R^n} f(x)\gamma_n(x){\rm d}x\right)\left(\int_{\R^n} g(x)\gamma_n(x){\rm d}x\right)\ .
\end{equation}
That is, under an isotropic Gaussian probability law on $\R^n$, random variables generated by orthogonal subspaces of $\R^n$ are statistically independent, and as is well known, this property 
is characteristic of isotropic Gaussian laws. 

In the case of the Clifford algebra, let  $V$ and $W$ be orthogonal subspaces of $\R^n$, and let $\Cl_V$ and $\Cl_W$, respectively, be the subalgebras of $\Cl$ generated by
$J(V)$ and $J(W)$. Then it is easy to see that if $A\in \Cl_V$ and $B\in \Cl_W$, then
$$\tau(AB) = \tau(A)\tau(B)\ , 
$$the analog of (\ref{IND}).

 \subsection{Differential calculus on the Clifford algebra}

The Clifford algebra becomes a Hilbert space endowed with the inner product
 \begin{align*}
  \ip{A, B} := \trace(A^*B)\;, \qquad A, B \in \Cl\;.
 \end{align*} 
The $2^n$ operators $(Q^\aa)_{\aa \in  \{0,1\}^n}$ form an orthonormal basis for $\Cl.$

For $i = 1, \ldots, n,$ we define the partial derivative  by
 \begin{align*}
  \nabla_i(Q^\aa) := \left\{ \begin{array}{ll}
   Q_i Q^\aa, \quad & \a_i = 1\;, \\ 
   0,        \quad & \a_i = 0\;, 
\end{array} \right.
 \end{align*}
and linear extension. We will also consider the gradient
 \begin{align*}
  \nabla : \Cl \to \Cln\;,
  \qquad A \mapsto \big(\nabla_1(A), \ldots, \nabla_n(A)\big)\;,
 \end{align*}
It is easy to check that
 \begin{align*}
  \nabla_i(A) = \frac12 (Q_i A - \Gr(A)Q_i)\;, \qquad A \in \Cl\;,
 \end{align*}
where $\Gr$ denotes the grading operator defined by
 \begin{align*}
  \Gr(Q^\aa) := (-1)^{|\aa|} Q^\aa\;.
 \end{align*}
For $A, B \in \Cl$ the product rule
\begin{align}\label{eq:prod-rule}
 \nabla_i(AB) = \Gr(A)\nabla_i(B) + \nabla_i(A) B
\end{align}
holds, and the following identities are readily checked:
 \begin{align}
  \Gr(A B) & = \Gr(A) \Gr(B)\;, \label{eq:gr-alghom}       \\
  \Gr(A^*) & = \Gr(A)^*\;, \label{eq:gr-adj} \\
  \trace(\Gr(A)B)  & = \trace(A\Gr(B))\;, \label{eq:gr-selfadj} \\
  (\nabla(A^*))^*  & = \Gr(\nabla A)
     = - \nabla(\Gr(A)) \label{eq:grad-adj}\;.
 \end{align}
By (\ref{eq:gr-alghom}) and (\ref{eq:gr-adj}), $A\mapsto \Gamma(A)$ is a $*$-automorphism, and it is often called the {\em principle automorphism} in $\Cl$. 
 
Here, and throughout the rest of this work, we use the convention that for $\AA = (A_1, \ldots, A_n)\in \Cln$ and $B \in \Cl,$
 \begin{align*}
  \AA B := (A_1 B, \ldots, A_n B)\;, \qquad
  B\AA  := (B A_1, \ldots, B A_n )\;.
 \end{align*}
Similarly, we will also extend an operator $T$ acting on $\Cl$ to an operator on $\Cln$ in the obvious way, by defining
 \begin{align*}
  T \AA := (T (A_1), \ldots, T(A_n))\;.
 \end{align*}
 
The adjoint of $\nabla_i$ with respect to the $L^2(\tau)$-inner product is given by
\begin{align*}
 \nabla_i^*(A) = \frac12 (Q_i A + \Gr(A)Q_i)\;, \qquad A \in \Cl\;,
\end{align*}
It follows that
\begin{align*}
  \nabla_i^*(Q^\aa) := \left\{ \begin{array}{ll}
 0,  \quad & \a_i = 1\;, \\ 
   Q_i Q^\aa\;,        \quad & \a_i = 0\;, 
\end{array} \right.
\end{align*}
and the identities
\begin{align} \label{eq:div-adj}
   (\nabla_i^*(A^*))^*
   & = -\Gr(\nabla_i^*A)
     = \nabla_i^*(\Gr(A))\;,
\end{align}
hold.
As usual, the divergence operator is defined by 
\begin{align*}
 \dive (\AA) := - \sum_{i=1}^n \nabla_i^*(A_i)\;.
\end{align*}
 
 \subsection{The Fermionic Fokker-Planck equation}

As noted above, an element $A$ of $\Cl$ is {\em non-negative} if for some $B\in \Cl$, $A = B^*B$. 
An element $A$ of $\Cl$ is {\em strictly positive} if for some $B\in \Cl$ and some $\lambda>0$, $A = B^*B + \lambda I$.
Let $\Dens$ denote the set of (non-commutative) probability densities, i.e., all non-negative elements $\rho \in \Cl$ satisfying $\trace(\rho) = 1.$
Let $\Dens_+$ denote  the set of strictly positive probability densities. 
The Fermionic Fokker-Planck Equation is an evolution equation for probability densities in $\Cl$ that we now define, starting from an analog
of the Dirichlet form (\ref{BOS}) associated to the classical Fokker-Planck equation.
 
Gross's Fermionic Dirichlet form ${\mathcal F}(A,A)$ on $\Cl$ is defined by
$${\mathcal F}(A,A) = \tau\left( (\nabla A)^*\cdot \nabla A\right) = \sum_{j=1}^n \tau \left( (\nabla_j A)^*\cdot \nabla_j A\right)\ .$$
In so far as $\tau$ is an analog of integration against $\gamma_n(x){\rm d}x$, this is a direct analog of (\ref{BOS}).

The {\em Fermionic number operator} ${\mathcal N}$ is defined by
$${\mathcal F}(B,A) = \ip{B, {\mathcal N}A}\ ,$$
and the {\em Fermionic Mehler semigroup} is given by 
$${\mathcal P}_t = e^{-t{\mathcal N}}\ ,$$
for $t\geq 0$. 
On the basis of the connection between the Mehler semigroup and the classical Fokker-Planck equation, we refer to 
\begin{equation}\label{FFPE}
\frac{\partial}{\partial t} \rho(t) =  - {\mathcal N}\rho(t)\ .
\end{equation}
More precisely, this is a direct analog of (\ref{FP4}), the classical Fokker-Planck equation for the evolution of a density with respect to the Gaussian reference measure
$\gamma_n(x){\rm d}x$, instead of with respect to Lebesgue measure, as there is no analog of Lebesgue measure in the quantum non-commutative setting.

At this point it is not obvious that  ${\mathcal P}_t \rho \in \Dens$ whenever $\rho\in \Dens$. Since ${\mathcal N}I = 0$, it is easy to see that $\tau({\mathcal P}_t \rho) = \tau(\rho)$
for all $t$, but the positivity is less evident. One way to see this is through an analog of Mehler's formula that is valid for the  Fermionic Mehler semigroup; see \cite{CL1}.

 \section{The continuity equation in the Clifford algebra and the Riemannian metric}
 
 We are finally finished with preliminaries and ready to 
begin our investigation. If we are to show that
 the Fermionic Fokker-Planck evolution is gradient flow for the relative entropy,
  it must at least be the case that relative entropy is dissipated along this evolution. We start by deducing a formula for the rate of 
  dissipation, and proceed from there to a study of the continuity equation in $\Cl$.

\subsection{Entropy dissipation along the Fermionic Fokker-Planck evolution}

For $\rho \in \Dens$, we define the relative entropy of $\rho$
with respect to $\tau$ to be 
$$S(\rho) = \tau[\rho \log \rho]\ .$$

Given $\rho_0 \in \Dens$, define $\rho_t := {\mathcal P}_t \rho_0$. Then
\begin{eqnarray}\label{basic}
\frac{{\rm d}}{{\rm d}t}S(\rho_t) &=& -\tau\left[ \log \rho_t  {\mathcal N}\rho_t\right]\nonumber\\
&=& - \tau\left[ (\nabla \log \rho_t)^* \cdot \nabla \rho_t\right]\ .
\end{eqnarray}

Our first goal is to rewrite this as the negative of a complete square analogous to (\ref{FP1D}), with the hope of identifying, through this computation, the
form of the ``minimal'' vector field in a continuity equation representation of the Fermionic Fokker-Planck equation. 
We use the following lemma:

\begin{lemma}\label{bas}
For any $\rho\in \Dens_+$, and any index $i$, 
\begin{equation}\label{basic2}
\nabla_i \rho = \int_0^1 \Gamma(\rho)^{1-s}\left[ \nabla_i \log \rho\right]  \rho^s \dd s\ .
\end{equation}
\end{lemma}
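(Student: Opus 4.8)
The plan is to reduce \eqref{basic2} to a Duhamel (variation-of-constants) identity for the exponential map, keeping careful track of the twist by the grading operator in the Leibniz rule \eqref{eq:prod-rule}. Write $H := \log\rho$. Since $\rho\in\Dens_+$ is strictly positive and $\Cl$ is finite-dimensional, $H$ is a well-defined self-adjoint element of $\Cl$, $\rho^s = e^{sH}$ for all $s\in\R$ by the functional calculus, and all the series and integrals below converge absolutely, so no domain or interchange issues arise.

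First I would fix $i$ and consider the $\Cln$-valued (in fact $\Cl$-valued, in the $i$-th slot) function $\phi(t) := \nabla_i(e^{tH})$ for $t\in[0,1]$. Differentiating, using $\ddt e^{tH} = H e^{tH}$ and the product rule \eqref{eq:prod-rule} with $A = H$, $B = e^{tH}$, gives the linear ODE
\[
  \phi'(t) = \Gr(H)\,\phi(t) + \nabla_i(H)\,e^{tH}, \qquad \phi(0) = \nabla_i(I) = 0 .
\]
Multiplying by the integrating factor $e^{-t\Gr(H)}$, which commutes with $\Gr(H)$, yields $\ddt\bigl(e^{-t\Gr(H)}\phi(t)\bigr) = e^{-t\Gr(H)}\,\nabla_i(H)\,e^{tH}$; integrating from $0$ to $1$ and using $\phi(0)=0$ gives
\[
  \nabla_i\rho = \phi(1) = \int_0^1 e^{(1-t)\Gr(H)}\,\nabla_i(H)\,e^{tH}\,\dd t .
\]
It then remains to identify the three factors in the integrand. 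Since $\Gr$ is a $*$-automorphism, $\Gr(H)$ is self-adjoint with $e^{\Gr(H)} = \Gr(e^{H}) = \Gr(\rho)$, so $e^{(1-t)\Gr(H)} = \Gr(\rho)^{1-t}$ by the functional calculus; likewise $e^{tH} = \rho^{t}$, and of course $\nabla_i(H) = \nabla_i(\log\rho)$. Substituting and renaming $t$ as $s$ gives exactly \eqref{basic2}.

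The only genuinely load-bearing point is the asymmetry of the Leibniz rule \eqref{eq:prod-rule}: the grading $\Gr$ acts on the \emph{left} factor, which is precisely what forces $\Gr(\rho)$ rather than $\rho$ to appear in the left slot of \eqref{basic2}; everything else is a routine Duhamel computation. As an alternative to the ODE one can argue purely algebraically: expand $e^{H} = \sum_{k\ge 0} H^k/k!$, iterate \eqref{eq:prod-rule} together with $\Gr(H^j) = \Gr(H)^j$ (from \eqref{eq:gr-alghom}) to obtain $\nabla_i(H^k) = \sum_{j=0}^{k-1} \Gr(H)^j\,\nabla_i(H)\,H^{k-1-j}$, collect the monomials $\Gr(H)^m\,\nabla_i(H)\,H^{\ell}$, and recognize the resulting coefficient $\tfrac{1}{(m+\ell+1)!}$ as the Beta integral $\tfrac{1}{m!\,\ell!}\int_0^1 (1-s)^m s^{\ell}\,\dd s$, which reassembles the sum into $\int_0^1 \Gr(\rho)^{1-s}\,\nabla_i(\log\rho)\,\rho^{s}\,\dd s$.
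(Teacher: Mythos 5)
Your argument is correct: the twisted Leibniz rule \eqref{eq:prod-rule} applied to $A=\log\rho$, $B=e^{t\log\rho}$ does give the ODE $\phi'(t)=\Gr(\log\rho)\,\phi(t)+\nabla_i(\log\rho)\,e^{t\log\rho}$ with $\phi(0)=\nabla_i(I)=0$, the integrating factor $e^{-t\Gr(\log\rho)}$ commutes with $\Gr(\log\rho)$, and since $\Gr$ is a $*$-automorphism one indeed has $e^{(1-t)\Gr(\log\rho)}=\Gr(\rho)^{1-t}$, so Duhamel yields \eqref{basic2}; in the finite-dimensional algebra $\Cl$ there are no analytic issues. This is, however, a different execution from the paper's proof, which writes $\rho=\lim_{k\to\infty}\bigl(I+\tfrac1k\log\rho\bigr)^k$, iterates \eqref{eq:prod-rule} on the $k$-th power to get the sum $\sum_{\ell=0}^{k-1}\tfrac1k\,\Gr\bigl(I+\tfrac1k\log\rho\bigr)^{\ell}\bigl[\nabla_i\log\rho\bigr]\bigl(I+\tfrac1k\log\rho\bigr)^{k-\ell-1}$, and recognizes this as a Riemann sum for the integral ``upon taking limits.'' Both proofs rest on exactly the same mechanism (repeated application of the graded product rule to an exponential-type object, with $\Gr$ landing on the left factors), but your continuous Duhamel/variation-of-constants route makes the limiting step automatic, whereas the paper's one-line limit conceals the justification that the discrete products converge to $\Gr(\rho)^{1-s}$ and $\rho^{s}$ uniformly enough to pass to the integral; your alternative series-plus-Beta-integral computation is essentially a third, purely algebraic, rendering of the same identity and is closest in spirit to the paper's discrete sum. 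Either of your two arguments would serve as a complete substitute for the paper's proof.
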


\begin{proof} Since $\rho\in \Dens_+$, 
$$\rho= \lim_{k\to\infty}\left(I + \frac1k \log \rho\right)^k\ ,$$
and by the product rule \eqref{eq:prod-rule},
$$\nabla_i \left(I + \frac1k \log \rho\right)^k = \sum_{\ell = 0}^{k-1} \frac1k \Gamma \left(I + \frac1k \log \rho\right)^\ell \left[  \nabla_i \log \rho\right] 
 \left(I + \frac1k \log \rho\right)^{k-\ell-1} \ .$$
The result follows upon taking limits. 
\end{proof}

\begin{remark} It is possible to develop a systematic chain rule for $\nabla_i$, but this simple example is all we need at present.
\end{remark}

Combining (\ref{basic})  and (\ref{basic2}), we obtain
\begin{equation}\label{basic3}
\frac{{\rm d}}{{\rm d}t}S(\rho_t)   = -\tau \left[ (\nabla \log \rho_t)^* \cdot  \int_0^1 (\Gamma \rho_t)^{1-s}\left[ \nabla \log \rho_t\right]  \rho_t^s \dd s\right]\ .
\end{equation}

The formula \eqref{basic2} is the analog of the classical formula $\nabla f(x) = f(x)\nabla \log f(x)$. It suggests that the meaningful analog of dividing by $\rho$ in $\Cl$ will involve
inversion of the operation 
$$C \mapsto  \int_0^1 \Gamma(\rho)^{1-s} C  \rho^s \dd s$$
in $\Cl$. This brings us to the following definition:

\begin{definition} Given strictly positive $m\times m$ matrices $A$ and $B$, define the linear transformation $(A,B)\#$ from the space of $m\times m$ matrices into itself by
\begin{equation}\label{trans1}
(A,B)\#C = \int_0^1 A^{1-s} C B^s \dd s\ .
\end{equation}
\end{definition}

The next theorem is not original, but as we lack a ready reference, we provide the short proof. We note that the $A=B$ case is used in \cite{L}.

\begin{theorem}\label{inversion} Let $A$ and $B$ be strictly positive definite $m\times m$ matrices. Then the linear transformation
$(A,B)\#$ from the space of $m\times m$ matrices into itself is invertible, and if  $(A,B)\#C = D$, then
\begin{equation}\label{trans2}
C = \int_0^\infty (A+xI)^{-1} D   (B+xI)^{-1} \dd x \ .
\end{equation}
\end{theorem}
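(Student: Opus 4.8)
The plan is to reduce the statement to a problem about commuting operators by simultaneously diagonalizing the relevant ``multiplication'' superoperators. Introduce the linear maps $L_A C = AC$ (left multiplication by $A$) and $R_B C = CB$ (right multiplication by $B$) on the space of $m\times m$ matrices. These two maps commute, since $(L_A R_B)C = ACB = (R_B L_A)C$, and each is diagonalizable with positive spectrum: if $A = \sum_j a_j P_j$ and $B = \sum_k b_k P'_k$ are the spectral decompositions (with $a_j, b_k > 0$), then the superoperators $E_{jk} C := P_j C P'_k$ form a complete family of commuting orthogonal projections, $L_A = \sum_{j,k} a_j E_{jk}$ and $R_B = \sum_{j,k} b_k E_{jk}$. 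In this simultaneous eigenbasis, $(A,B)\#$ acts as the scalar $\int_0^1 a_j^{1-s} b_k^s\,\dd s$ on the $E_{jk}$-component, and the proposed inverse acts as $\int_0^\infty (a_j + x)^{-1}(b_k + x)^{-1}\,\dd x$.

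So the whole theorem collapses to the scalar identity: for $a, b > 0$,
\begin{equation*}
\left(\int_0^1 a^{1-s} b^s\,\dd s\right)\left(\int_0^\infty \frac{\dd x}{(a+x)(b+x)}\right) = 1\ .
\end{equation*}
Both factors are elementary. The first integral is $a\int_0^1 (b/a)^s\,\dd s = a\cdot\frac{(b/a) - 1}{\log(b/a)} = \frac{b - a}{\log b - \log a}$ (the logarithmic mean of $a$ and $b$), with the understanding that it equals $a$ when $a = b$. The second integral, by partial fractions when $a \neq b$, is $\frac{1}{b-a}\int_0^\infty\left(\frac{1}{a+x} - \frac{1}{b+x}\right)\dd x = \frac{1}{b-a}\log\frac{b}{a} = \frac{\log b - \log a}{b - a}$, and it equals $1/a$ when $a = b$. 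The product is visibly $1$ in both cases.

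Concretely I would write the proof as follows: first observe $L_A$ and $R_B$ commute and are positive and diagonalizable, hence so are all the maps built from them by functional calculus, in particular $(A,B)\#$ (which is $f(L_A, R_B)$ for $f(a,b) = \int_0^1 a^{1-s}b^s\,\dd s$) and the map $M: D \mapsto \int_0^\infty (A+xI)^{-1}D(B+xI)^{-1}\,\dd x$ (which is $g(L_A,R_B)$ for $g(a,b) = \int_0^\infty (a+x)^{-1}(b+x)^{-1}\,\dd x$). Then $(A,B)\#\circ M = (fg)(L_A, R_B)$, and the scalar computation above shows $f(a,b)g(a,b) \equiv 1$ on $(0,\infty)^2$, so $(fg)(L_A,R_B)$ is the identity superoperator. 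This gives both invertibility of $(A,B)\#$ and the formula $C = MD$ for the inverse. One should also note that the integral defining $M$ converges, which it does since the integrand is $O(x^{-2})$ at infinity and bounded near $0$.

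The only mild subtlety — the ``main obstacle,'' such as it is — is making the functional-calculus-for-two-commuting-operators step precise, i.e.\ justifying that $(A,B)\#$ and $M$ really are given by applying the scalar functions $f$ and $g$ to the commuting pair $(L_A, R_B)$. This is immediate once one expands in the joint eigenprojections $E_{jk}$: one may interchange the $s$- or $x$-integral with the finite sum $\sum_{j,k}$ freely (everything is a finite-dimensional, absolutely convergent expression), obtaining $(A,B)\#= \sum_{j,k} f(a_j,b_k)E_{jk}$ and $M = \sum_{j,k} g(a_j,b_k)E_{jk}$, whence $(A,B)\#\circ M = \sum_{j,k} f(a_j,b_k)g(a_j,b_k)E_{jk} = \sum_{j,k} E_{jk} = \mathrm{Id}$. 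Everything else is the two one-line integral evaluations.
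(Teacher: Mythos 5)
Your proof is correct and is essentially the same argument as the paper's: the paper also diagonalizes in the eigenbases of $A$ and $B$ (computing matrix elements $u_i\cdot(\,\cdot\,)v_j$ rather than phrasing things via the superoperators $L_A$, $R_B$ and their joint spectral projections) and reduces the claim to the same scalar identity $\bigl(\int_0^1 a^{1-s}b^s\,\dd s\bigr)\bigl(\int_0^\infty (a+x)^{-1}(b+x)^{-1}\,\dd x\bigr)=1$, evaluated exactly as you do. The only difference is cosmetic: your joint-eigenprojection formulation makes it explicit that the two maps are simultaneously diagonalized and hence mutually inverse in both orders.
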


\begin{proof}
Let $\{u_1,\dots, u_m\}$ be an orthonormal basis of $\C^m$ consisting of eigenvectors of $A$, and
let   $\{v_1,\dots, v_m\}$ be an orthonormal  basis of $\C^m$ consisting of eigenvectors  of $B$.  Let $Au_i = a_iu_i$ and 
$B v_j = b_jv_j$ for each $i$ and $j$. Then
$$
u_i \cdot \left( \int_0^\infty  (A+xI)^{-1}   \int_0^1 A^{1-s} C B^s \dd s   (B+xI)^{-1} \dd x\right) v_j = 
(u_i\cdot Cv_j) \int_0^1\left[\int_0^\infty \frac{1}{a_i+x}\frac{1}{b_j+x}\dd x \right] a_i^{1-s}b_j^s \dd s \ .\nonumber
$$
Thus it suffices to show that
$$\int_0^1\left[\int_0^\infty \frac{1}{a+x}\frac{1}{b+x}\dd x \right] a^{1-s}b^s \dd s = 1 $$
for all strictly positive numbers $a$ and $b$.  If $a= b$, this is immediately clear. Otherwise, one computes
$$
\int_0^\infty \frac{1}{a+x}\frac{1}{b+x}\dd x =  \frac{1}{a-b}\log(a/b)\ ,
$$
form which the desired result follows directly. 
\end{proof}

The inverse operation will be used frequently in what follows since it provides our ``division by $\rho$'' operation, and so we make a definition:

\begin{definition} Given strictly positive $m\times m$ matrices $A$ and $B$, define the linear transformation $(A,B)\widehat \#$ from the space of $m\times m$ matrices into itself by
\begin{equation}\label{trans3}
(A,B)\widehat \#C = \int_0^\infty  (A+xI)^{-1} C   (B+xI)^{-1} \dd x \ .
\end{equation}
\end{definition}

The following inequalities will be useful:

\begin{lemma}\label{lem:eps-bound}
Let $A$ and $B$ be $m \times m$ matrices satisfying $A, B \geq \eps I$ for some $\eps > 0$. Then, for all $m \times m$ matrices $C$,
\begin{align*}
 {\eps}\tr\big[C^* (A, B) \widehat \# C \big] &\leq  \tr\left[C^* C \right]
 \leq \frac{1}{\eps}\tr\big[C^* (A, B) \# C \big]\;.
\end{align*} 
\end{lemma}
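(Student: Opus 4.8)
The plan is to use the spectral representations of both operators $(A,B)\#$ and $(A,B)\widehat\#$ in a joint eigenbasis, reducing the matrix inequalities to scalar inequalities about the kernels that appear, exactly as in the proof of Theorem~\ref{inversion}. First I would diagonalize: pick an orthonormal basis $\{u_i\}$ of eigenvectors of $A$ with $Au_i=a_iu_i$ and an orthonormal basis $\{v_j\}$ of eigenvectors of $B$ with $Bv_j=b_jv_j$, and expand $C$ in the rank-one "matrix units" $u_iv_j^*$, writing $c_{ij}=u_i\cdot Cv_j$. A direct computation gives
\begin{align*}
 \tr\big[C^*(A,B)\# C\big] &= \sum_{i,j} |c_{ij}|^2 \,\phi(a_i,b_j)\;, \qquad \phi(a,b):=\int_0^1 a^{1-s}b^s\dd s\;, \\
 \tr\big[C^*(A,B)\widehat\# C\big] &= \sum_{i,j} |c_{ij}|^2 \,\psi(a_i,b_j)\;, \qquad \psi(a,b):=\int_0^\infty \frac{1}{(a+x)(b+x)}\dd x\;,
\end{align*}
while $\tr[C^*C]=\sum_{i,j}|c_{ij}|^2$. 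So the lemma reduces to the two scalar claims: for all $a,b\geq\eps$,
\begin{align*}
 \eps\,\psi(a,b)\leq 1 \leq \frac{1}{\eps}\,\phi(a,b)\;.
\end{align*}

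The right-hand inequality $\phi(a,b)\geq\eps$ is immediate from monotonicity: since $a,b\geq\eps$ we have $a^{1-s}b^s\geq\eps^{1-s}\eps^s=\eps$ for every $s\in[0,1]$, hence $\phi(a,b)=\int_0^1 a^{1-s}b^s\dd s\geq\eps$. For the left-hand inequality I would use the closed form already computed in the proof of Theorem~\ref{inversion}, namely $\psi(a,b)=\frac{\log(a/b)}{a-b}$ for $a\neq b$ and $\psi(a,a)=1/a$. One checks that $\psi$ is decreasing in each argument (e.g.\ the logarithmic mean $(a-b)/\log(a/b)$ lies between $a$ and $b$, so $\psi(a,b)=1/\text{LogMean}(a,b)\leq 1/\min(a,b)\leq 1/\eps$), giving $\eps\,\psi(a,b)\leq 1$. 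Alternatively, and perhaps more cleanly, one can avoid the case split: $(A,B)\widehat\#$ is the inverse of $(A,B)\#$ by Theorem~\ref{inversion}, and $(A,B)\#$ as an operator on matrices (with the Hilbert–Schmidt inner product) has spectrum $\{\phi(a_i,b_j)\}$, which all lie in $[\eps,\infty)$; hence $(A,B)\#\geq\eps\,\mathrm{Id}$ as a positive operator, which gives simultaneously $\tr[C^*(A,B)\#C]\geq\eps\tr[C^*C]$ and, by inverting, $(A,B)\widehat\#=((A,B)\#)^{-1}\leq\eps^{-1}\mathrm{Id}$, i.e.\ $\tr[C^*(A,B)\widehat\#C]\leq\eps^{-1}\tr[C^*C]$.

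I would also note the one subtlety that makes the operator-theoretic phrasing legitimate: $(A,B)\#$ is self-adjoint and positive with respect to the Hilbert–Schmidt inner product $\langle X,Y\rangle=\tr[X^*Y]$ — self-adjointness follows because $\tr[X^*(A^{1-s}YB^s)]=\tr[(A^{1-s}XB^s)^*Y]$ for each $s$ (using that $A,B$ are self-adjoint), and the $u_iv_j^*$ are precisely its eigenvectors with the positive eigenvalues $\phi(a_i,b_j)$; this is exactly what the computation in Theorem~\ref{inversion} already exhibits. There is no real obstacle here — the content is entirely the scalar estimate $\eps\leq\phi(a,b)$ and $\psi(a,b)\leq\eps^{-1}$ — the only thing to be careful about is invoking the joint eigenbasis correctly (the $\{u_i\}$ and $\{v_j\}$ are different bases, and $C$ need not be self-adjoint or square-relevant beyond being an arbitrary matrix), but the bilinear expansion in $u_iv_j^*$ handles this cleanly.
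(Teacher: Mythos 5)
Your proposal is correct and follows essentially the same route as the paper: expand $C$ with respect to the spectral decompositions of $A$ and $B$, diagonalize $(A,B)\#$ and $(A,B)\widehat\#$ on the resulting matrix units, and reduce both inequalities to the scalar bounds $\int_0^1 a^{1-s}b^s\dd s\geq\eps$ and $\int_0^\infty (a+x)^{-1}(b+x)^{-1}\dd x\leq\eps^{-1}$ for $a,b\geq\eps$. The operator-norm rephrasing via Theorem~\ref{inversion} is a valid (and clean) packaging of the same computation.
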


\begin{proof}
Consider the spectral decompositions $A = \sum_i a_i \widetilde u_i$ and $B = \sum_j a_j \widetilde v_j$, where $\widetilde u$ denotes the spectral projection corresponding to the eigenvector $u$. Then we can write $C = \sum_{i,j} c_{ij} \widetilde u_i \widetilde v_j$ for some uniquely determined $c_{ij} \in \C$. Since $a_i, b_j \geq \eps$ by assumption, it follows that
\begin{align*}
 \tr \left[C^* (A, B) \widehat \# C \right]
 & = \sum_{i,j} |c_{ij}|^2 \tr(\widetilde u_i \widetilde v_j)\int_0^\infty
 \frac{1}{a_i+x}\frac{1}{b_j+x}\dd x 
 \\& \leq \frac{1}{\eps} \sum_{i,j} |c_{ij}|^2   \tr(\widetilde u_i \widetilde v_j)
 \\&    =  \frac{1}{\eps} \tr \left[C^*  C \right]\;,
\end{align*}
which proves the first inequality. The second inequality follows from the same argument.
\end{proof}

We also observe:

\begin{lemma}\label{pos}   Given strictly positive $m\times m$ matrices $A$ and $B$,  for all  $m\times m$ matrices $C$,
$$\tr\left[C^* (A,B)\# C\right] \geq 0\ ,$$
and there is equality if and only if $C = 0$. Moreover, for all $m\times m$ matrices $C$ and $D$, 
\begin{align*}
\tr\left[C^* (A,B)\# D\right] = \left({\tr\left[D^* (A,B)\# C\right]}\right)^*\;.
\end{align*}
\end{lemma}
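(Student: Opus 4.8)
The plan is to reduce everything to the elementary fact that $\tr[X^*X]\geq 0$ for every matrix $X$. First I would use linearity of the trace to write
\[
\tr\bigl[C^* (A,B)\# C\bigr] = \int_0^1 \tr\bigl[C^* A^{1-s} C B^s\bigr]\dd s\ ,
\]
and similarly with $D$ in place of the second $C$. Since $A$ and $B$ are strictly positive, for every $s\in[0,1]$ the fractional powers $A^{(1-s)/2}$ and $B^{s/2}$ are well-defined self-adjoint (indeed positive definite) matrices, so cyclicity of the trace gives
\[
\tr\bigl[C^* A^{1-s} C B^s\bigr] = \tr\bigl[(A^{(1-s)/2} C B^{s/2})^* (A^{(1-s)/2} C B^{s/2})\bigr] = \bigl\|A^{(1-s)/2} C B^{s/2}\bigr\|_{\mathrm{HS}}^2 \geq 0\ .
\]
Integrating over $s\in[0,1]$ yields $\tr[C^*(A,B)\# C]\geq 0$.

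For the equality statement, the integrand $s\mapsto \tr[C^* A^{1-s} C B^s]$ is continuous and nonnegative on $[0,1]$, so if its integral vanishes then it vanishes identically; evaluating at $s=\tfrac12$ gives $\|A^{1/4} C B^{1/4}\|_{\mathrm{HS}}=0$, hence $A^{1/4} C B^{1/4}=0$, and since $A^{1/4}$ and $B^{1/4}$ are invertible (as $A,B$ are strictly positive) we conclude $C=0$. The converse implication $C=0\Rightarrow$ equality is trivial. (Alternatively, once the first and third parts are in hand one has a positive semidefinite Hermitian form $(C,D)\mapsto \tr[C^*(A,B)\# D]$, so $\tr[C^*(A,B)\# C]=0$ forces, via Cauchy--Schwarz, $(A,B)\# C$ to be orthogonal in the Hilbert--Schmidt inner product to every matrix, hence $(A,B)\# C=0$, and then $C=0$ by the invertibility in Theorem~\ref{inversion}.)

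Finally, for the Hermitian symmetry I would take complex conjugates and move the adjoint inside the trace: for each $s$, using that $A^{1-s}$ and $B^s$ are self-adjoint together with cyclicity,
\[
\overline{\tr\bigl[C^* A^{1-s} D B^s\bigr]} = \tr\bigl[B^s D^* A^{1-s} C\bigr] = \tr\bigl[D^* A^{1-s} C B^s\bigr]\ .
\]
Integrating over $s\in[0,1]$ gives $\overline{\tr[C^*(A,B)\# D]} = \tr[D^*(A,B)\# C]$, which is the claimed identity.

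There is no substantive obstacle here; the only point requiring a moment's care is the equality case, where one must note that vanishing of the integral of the continuous nonnegative integrand forces its vanishing at a single value of $s$ — say $s=\tfrac12$ — and that it is precisely the strict positivity of $A$ and $B$ that makes the fractional powers invertible so that $C=0$ follows. Everything else is a routine trace manipulation.
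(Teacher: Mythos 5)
Your proof is correct, and it diverges from the paper's on the positivity half. The paper disposes of both the non-negativity and the equality case in one line by invoking the second inequality of Lemma \ref{lem:eps-bound}: since $A,B\geq\eps I$ for some $\eps>0$, one has $\tr\left[C^*(A,B)\#C\right]\geq \eps\,\tr\left[C^*C\right]$, so the quantity is non-negative and vanishes only when $C=0$, with no continuity argument needed. You instead give a self-contained argument, factoring the integrand as the Hilbert--Schmidt norm $\bigl\|A^{(1-s)/2}CB^{s/2}\bigr\|_{\mathrm{HS}}^2$ via cyclicity, which avoids the spectral decompositions underlying Lemma \ref{lem:eps-bound}; the price is that the equality case becomes slightly more delicate (continuity of the non-negative integrand, evaluation at $s=\tfrac12$, invertibility of $A^{1/4}$ and $B^{1/4}$), all of which you handle correctly, and your alternative Cauchy--Schwarz argument also works. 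The paper's route buys brevity and a quantitative lower bound already on record (which is reused elsewhere, e.g.\ in the diameter and extension estimates); yours buys independence from that auxiliary lemma and is arguably more elementary. Your verification of the Hermitian symmetry is the same trace manipulation as in the paper.
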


\begin{proof}
It follows from the second inequality in Lemma \ref{lem:eps-bound} that the quantity
$\tr\left[C^* (A,B)\# C\right]$ is non-negative and vanishes if and
only if $C= 0$.

Next, using the fact that for all $m \times m$ matrices $C$, $\tr(C) = \left[\tr(C^*)\right]^*$, and then cyclicity of the trace, 
\begin{align*}
\tr\left[C^* (A,B)\# D\right]
  & =  \int_0^1\tr\left[C^*A^{1-s}D B^s\right]\dd s
\\&= \left(\int_0^1\tr\left[B^s D^* A^{1-s} C  \right]\dd s\right)^*
\\&= \left(\int_0^1\tr\left[ D^* A^{1-s} C B^s \right]\dd s\right)^*
\\& =\left(\tr\left[D^* (A,B)\# C\right]\right)^*\;.
\end{align*}
\end{proof}

Using our new notation, we may rewrite (\ref{basic3}) as
\begin{equation}\label{basic4}
\frac{{\rm d}}{{\rm d}t}S(\rho_t)   = -\tau \left[(\nabla \log \rho_t)^* \cdot  \left(\Gamma(\rho_t),\rho_t \right)\# \nabla \log \rho_t\right]\ .
\end{equation} 
Note that by Lemma~\ref{pos}, the right hand side is strictly negative unless $\rho_t= I$.  We have now achieved a meaningful analog of \eqref{FP1D}
that will lead  us to a meaningful definition of the continuity equation in $\Cl$. Before coming to this, we continue by proving several formulas pertaining to the
inner product implicit in (\ref{basic4}) that will be useful later when we define our 
Riemannian metric on $\Dens$.

\begin{definition}\label{rhonorm}
Let $\rho\in \Dens_+$. For any $A,B\in \Cl$ define the sesquilinear form
$$\langle A,B\rangle_\rho := \tau\left[ A^* \left(\Gamma(\rho),\rho\right)\# B\right]\ ,$$
which by Lemma~\ref{pos} is an inner product on $\Cl$. We define
$$\|A\|_\rho =  \sqrt{\langle A,A\rangle_\rho}$$
to be the corresponding norm. Similarly, for $\AA, \BB \in \Cln$ we define the inner product
\begin{align*}
 \ipo{\AA, \BB}_\rho = \sum_{i=1}^n \ipo{A_i, B_i}_\rho
\end{align*}
and the corresponding norm
\begin{align*}
 \|\AA\|_{\rho} = \sqrt{\sum_{i=1}^n \|A_i\|_\rho^2 }\;.
\end{align*}
\end{definition}

\begin{lemma}[Properties of the inner product $\langle \cdot,\cdot\rangle_\rho$]
\label{ipp}
For any $A,B\in \Cl$,
\begin{eqnarray}
 \langle A,B\rangle_\rho &=& \langle \Gamma(B^*),\Gamma(A^*)\rangle_\rho \ .\label{ipp2}
\end{eqnarray}
Moreover, if $U, V \in \Cln$ are self-adjoint, then
$\ipo{\nabla U, \nabla V}_\rho \in \R$.
\end{lemma}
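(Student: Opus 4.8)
The plan is to prove the two assertions in turn, using the explicit integral formula defining $(\Gamma(\rho),\rho)\#$ together with the adjointness identities \eqref{eq:grad-adj} and the symmetry property recorded in Lemma~\ref{pos}. For \eqref{ipp2}, the idea is to unfold the definition,
\[
\langle A,B\rangle_\rho = \tau\!\left[A^*\,(\Gamma(\rho),\rho)\#B\right]
= \int_0^1 \tau\!\left[A^*\,\Gamma(\rho)^{1-s}\,B\,\rho^s\right]\dd s,
\]
and then perform the following moves inside the integrand: apply $\tau(C) = (\tau(C^*))^*$, use cyclicity of $\tau$, and insert $\tau(\Gamma(X)Y)=\tau(X\Gamma(Y))$ from \eqref{eq:gr-selfadj} together with the fact that $\Gamma$ is a $*$-automorphism fixing each $\rho^s$ (so $\Gamma(\rho^s)=\Gamma(\rho)^s$, since $\rho\in\Dens_+$ and $\Gamma$ commutes with the continuous functional calculus of self-adjoint elements). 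The target $\langle \Gamma(B^*),\Gamma(A^*)\rangle_\rho$ expands to $\int_0^1\tau[\Gamma(B)\,\Gamma(\rho)^{1-s}\,\Gamma(A^*)\,\rho^s]\dd s$; I would reconcile the two expressions by the change of variable $s\mapsto 1-s$ in the integral, which swaps the roles of $\Gamma(\rho)^{1-s}$ and $\rho^s$, after moving $\Gamma$ across the trace. The main bookkeeping point — and the only place a sign or an adjoint could go wrong — is tracking how $\Gamma$, the star, and cyclicity interact; this is routine but must be done carefully.

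For the second assertion, let $U=(U_1,\dots,U_n)$ and $V=(V_1,\dots,V_n)$ be self-adjoint, so $U_i^*=U_i$ and $V_i^*=V_i$. By definition $\langle \nabla U,\nabla V\rangle_\rho = \sum_{i,j}\langle \nabla_j U_i,\nabla_j V_i\rangle_\rho$ (or the appropriate single sum, depending on the convention in force), and to show this lies in $\R$ it suffices to show it equals its own complex conjugate. Here I would invoke the conjugate-symmetry half of Lemma~\ref{pos}, which gives $\langle A,B\rangle_\rho = \overline{\langle B,A\rangle_\rho}$, reducing the problem to showing $\langle \nabla_j U_i,\nabla_j V_i\rangle_\rho = \langle \nabla_j V_i,\nabla_j U_i\rangle_\rho$. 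Combining this with \eqref{ipp2} — just proved — one has $\langle \nabla_j V_i,\nabla_j U_i\rangle_\rho = \langle \Gamma((\nabla_j U_i)^*),\Gamma((\nabla_j V_i)^*)\rangle_\rho$, so it remains to relate $\Gamma((\nabla_j U_i)^*)$ back to $\nabla_j U_i$. This is exactly the content of \eqref{eq:grad-adj}: $(\nabla(A^*))^* = \Gamma(\nabla A)$, i.e. $\Gamma((\nabla_j A)^*) = \nabla_j(A^*)$ componentwise after taking adjoints; since $U_i$ is self-adjoint, $\Gamma((\nabla_j U_i)^*) = \nabla_j U_i$. Feeding this in collapses the right-hand side back to $\langle \nabla_j U_i,\nabla_j V_i\rangle_\rho$, closing the loop.

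The only genuine obstacle I anticipate is getting the interplay in the first part exactly right — in particular, verifying that $\Gamma(\rho)^{1-s}$ and $\rho^s$ really do exchange under the combination of $s\mapsto 1-s$, cyclicity, and the $\Gamma$-selfadjointness identity, rather than producing a spurious $\Gamma$ on one factor only. I would handle this by writing everything out in the spectral basis of $\rho$ (equivalently, noting $\Gamma(\rho)$ and $\rho$ need not commute, so one should manipulate symbolically via \eqref{eq:gr-alghom}--\eqref{eq:grad-adj} rather than diagonalizing simultaneously). Everything else — the reduction of the second claim to \eqref{ipp2}, Lemma~\ref{pos}, and \eqref{eq:grad-adj} — is a short formal computation once the identities are lined up.
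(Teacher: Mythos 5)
Your proposal is correct and follows essentially the same route as the paper: the identity \eqref{ipp2} comes from exactly the moves the paper uses (the fact that $\Gamma$ is a trace-preserving $*$-automorphism, so it commutes with the functional calculus and with $\tau$, plus cyclicity and the $s\mapsto 1-s$ substitution), and the extra step $\tau(C)=(\tau(C^*))^*$ you list is not actually needed. Your treatment of the second assertion is a slight streamlining rather than a different method: instead of redoing the computation as the paper does, you deduce the symmetry $\langle\nabla U,\nabla V\rangle_\rho=\langle\nabla V,\nabla U\rangle_\rho$ from \eqref{ipp2} together with $\Gamma((\nabla U)^*)=\nabla(U^*)=\nabla U$ from \eqref{eq:grad-adj}, and then conclude reality via the conjugate symmetry in Lemma~\ref{pos}, exactly as the paper does.
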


\begin{proof}
Using cyclicity of the trace and \eqref{eq:gr-alghom}--\eqref{eq:gr-selfadj},
\begin{eqnarray}
\langle A,B\rangle_\rho 
&=& \int_0^1\tau\left[A^*\Gr(\rho)^{s}B\rho^{1-s}\right]\dd s\nonumber\\
&=& \int_0^1 \tau\left[B \rho^{1-s} A^* \Gr(\rho)^s \right]\dd s\nonumber\\
&=& \int_0^1 \tau\left[\Gr(B^*)^* \Gr(\rho)^{1-s} \Gr(A^*) \rho^s \right]\dd s  \nonumber\\
&=&\langle \Gr(B^*),\Gr(A^*)\rangle_\rho\ .\nonumber
\end{eqnarray}
Next, if $U = U^*$ and $V = V^*$ we obtain using \eqref{eq:grad-adj},\eqref{eq:gr-selfadj}, and cyclicity of the trace,
\begin{align*}
 \ipr{\nabla U, \nabla V}
&  = \int_0^1 \tau( (\nabla U)^* \cdot \Gr(\rho)^{1- s} \cdot
  			\nabla V \cdot \rho^{s} ) \dd s
\\&  = \int_0^1 \tau( \Gr(\nabla U) \cdot \Gr(\rho)^{1- s} \cdot
  			\Gr(\nabla V)^* \cdot \rho^{s} ) \dd s
\\&  = \int_0^1 \tau( \nabla U \cdot \rho^{1- s} \cdot
  			(\nabla V)^* \cdot \Gr(\rho)^{s} ) \dd s
\\&  = \int_0^1 \tau(
  			(\nabla V)^* \cdot \Gr(\rho)^{s} \cdot
			 \nabla U \cdot \rho^{1- s}  ) \dd s
\\& = \ipr{\nabla V, \nabla U}\;.
\end{align*}
Since $ \ipr{\nabla U, \nabla V} =  \ipr{\nabla V, \nabla U}^*$ by Lemma \ref{pos}, the claim follows.
\end{proof} 

\subsection{The continuity equation in the Clifford algebra}

Let $\rho(t)$ denote a continuously differentiable curve in $\Dens_+$.  Let us use the notation
$$\dot \rho(t) = \frac{{\rm d}}{{\rm d}t}\rho(t)\ .$$
Then evidently,
$$0 = \tr[\dot \rho(t)] = \ip{I,\dot\rho(t)}\ ,$$
so that $\dot \rho(t)$ is orthogonal to the null space of ${\mathcal N}$. Hence  
$$\dot \rho(t)  =  {\mathcal N}({\mathcal N}^{-1}  \dot \rho(t) )\ .$$
Thus, defining
\begin{equation*}
{\bf A} (t) :=  \nabla({\mathcal N}^{-1}  \dot \rho(t) )\ ,
\end{equation*}
we have
\begin{equation*}
\dot \rho(t) + {\rm div}({\bf A}(t))= 0\ .
\end{equation*}
To write this in the form of  a continuity equation, we use the versions of ``division by $\rho$'' and ``multiplication by $\rho$'' defined in the previous section to define
\begin{equation*}
{\bf V}(t) :=  \left(\Gamma(\rho(t)),\rho(t)\right)\widehat\# {\bf A}(t)\ ,
\end{equation*}
Then by Theorem~\ref{inversion}, we have that
\begin{equation}\label{contequa}
\dot \rho(t) + \dive\left( \left(\Gr(\rho(t)),\rho(t)\right)\#{\bf V}(t)\right)= 0\ .
\end{equation}

\begin{definition}[The continuity equation in the Clifford algebra]Given a  vector field ${\bf V}(t)$ on $\Cl$ depending continuously on $t\in \R$, a continuously differentiable
curve $\rho(t)$ in $\Dens_+$ satisfies the {\em continuity equation for} ${\bf V}(t)$ in case (\ref{contequa}) is satisfied.
\end{definition}

If $\rho(t)$ is a continuously differentiable curve in $\Cl$, then $\dot \rho(t)$ is self-adjoint for each $t$. Considering the definition of the continuity 
equation in the Clifford algebra that we have given, this raises the following question:  {\em For which ${\bf V} \in \Cl^n$ is 
$ {\rm div}\left( \left(\Gamma(\rho(t)),\rho(t)\right)\#{\bf V}\right)$ self-adjoint?}  The following theorem provides an answer that serves our purposes here:

\begin{theorem}\label{selfad}
For $C \in \Cl$ and $\rho\in \Dens_+$ one has 
$$ {\rm div}\left( \left[\Gamma(\rho),\rho\right]\# \nabla (C^*)\right)
  = \Big[{\rm div}\left( \left[\Gamma(\rho),\rho\right]\# \nabla C\right)\Big]^*\;.$$
Consequently, if $C$ is self-adjoint, then
$$ {\rm div}\left( \left[\Gamma(\rho),\rho\right]\# \nabla C\right)$$
is self-adjoint as well.
\end{theorem}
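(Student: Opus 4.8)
The plan is to verify the displayed identity by applying the adjoint $*$ to the right-hand side and pushing it through the three building blocks of the expression — the divergence $\dive=-\sum_i\nabla_i^*$, the bracket $\left[\Gamma(\rho),\rho\right]\#$, and the gradient $\nabla$ — using only the adjointness identities \eqref{eq:gr-adj}, \eqref{eq:grad-adj}, \eqref{eq:div-adj}, together with the two structural facts that $\Gamma$ is a $*$-automorphism of $\Cl$ and that $\Gamma^2=\mathrm{id}$. Once the first identity is in place, the second assertion is immediate: taking $C=C^*$ gives $\dive\left(\left[\Gamma(\rho),\rho\right]\#\nabla C\right)=\left[\dive\left(\left[\Gamma(\rho),\rho\right]\#\nabla C\right)\right]^*$.

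First I would record two elementary pointwise identities. Writing \eqref{eq:grad-adj} in the form $(\nabla_i(C^*))^*=\Gamma(\nabla_i C)$, taking the adjoint of both sides, and using \eqref{eq:gr-adj}, I obtain $\nabla_i(C^*)=\Gamma\big((\nabla_i C)^*\big)$; this is the ``gradient half''. For the ``divergence half'', substituting $A=X^*$ in \eqref{eq:div-adj} and using \eqref{eq:gr-adj} yields $(\nabla_i^*(X))^*=\nabla_i^*\big(\Gamma(X)^*\big)$ for every $X\in\Cl$.

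Next, fix an index $i$ and set $X_i:=\left[\Gamma(\rho),\rho\right]\#\nabla_i C=\int_0^1\Gamma(\rho)^{1-s}(\nabla_i C)\rho^s\dd s$. Since $\Gamma$ is a $*$-automorphism of the finite-dimensional algebra $\Cl$, it passes through the integral, through products \eqref{eq:gr-alghom}, and through the continuous functional calculus of the positive elements $\rho$ and $\Gamma(\rho)$; combined with $\Gamma^2=\mathrm{id}$ this gives $\Gamma(X_i)=\int_0^1\rho^{1-s}\,\Gamma(\nabla_i C)\,\Gamma(\rho)^s\dd s$. Taking the adjoint, using $\Gamma(\nabla_i C)^*=\Gamma\big((\nabla_i C)^*\big)=\nabla_i(C^*)$ from the previous paragraph, and then changing variables $s\mapsto 1-s$, I get $\Gamma(X_i)^*=\int_0^1\Gamma(\rho)^{1-s}\,\nabla_i(C^*)\,\rho^s\dd s=\left[\Gamma(\rho),\rho\right]\#\nabla_i(C^*)$. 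Finally I assemble everything, using $\dive(\AA)=-\sum_i\nabla_i^*(A_i)$ and the divergence-half identity:
\begin{align*}
\Big[\dive\big(\left[\Gamma(\rho),\rho\right]\#\nabla C\big)\Big]^*
&= -\sum_{i=1}^n\big(\nabla_i^*(X_i)\big)^*
 = -\sum_{i=1}^n\nabla_i^*\big(\Gamma(X_i)^*\big) \\
&= -\sum_{i=1}^n\nabla_i^*\big(\left[\Gamma(\rho),\rho\right]\#\nabla_i(C^*)\big)
 = \dive\big(\left[\Gamma(\rho),\rho\right]\#\nabla(C^*)\big)\;,
\end{align*}
which is the asserted identity.

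I do not anticipate a genuine obstacle: the content is entirely bookkeeping. The one point requiring care is the order in which the three operations in the bracket computation — applying $\Gamma$, taking $*$, and the substitution $s\mapsto 1-s$ — interact: each of $\Gamma$ and $*$ individually reverses the roles of $\Gamma(\rho)$ and $\rho$ and reverses the order of the triple product, so one must check that their composition followed by the change of variable returns the bracket to its original form $\left[\Gamma(\rho),\rho\right]\#(\cdot)$ and not to $\left[\rho,\Gamma(\rho)\right]\#(\cdot)$. It is also worth a line to justify that $\Gamma$ commutes with the non-integer powers $\rho^s$ and $\Gamma(\rho)^s$, which holds because $\Gamma$ is a positivity-preserving $*$-isomorphism and hence commutes with the continuous functional calculus.
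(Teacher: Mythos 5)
Your proof is correct and is essentially the paper's own argument: your ``gradient half'' and ``divergence half'' are exactly the identities \eqref{eq:grad-adj} and \eqref{eq:div-adj}, and your computation showing $\Gamma(X_i)^*=\left[\Gamma(\rho),\rho\right]\#\nabla_i(C^*)$ is precisely the paper's Lemma~\ref{gamstab}, which states that the antilinear map $\Gamma_*(C):=\Gamma(C^*)$ commutes with $\left[\Gamma(\rho),\rho\right]\#$. The only difference is presentational: the paper packages these steps through the named operator $\Gamma_*$, while you carry them out componentwise.
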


We preface the proof with the following definition and lemma:

\begin{definition}
 We define the antilinear operator $\Gamma_*$ on $\Cl$ by 
 \begin{equation}\label{gamsta0} 
 \Gamma_*(C) := \Gamma(C^*)\ 
 \end{equation}
  for all $C\in \Cl$.
\end{definition}

\begin{lemma}\label{gamstab} 
 For all $\rho\in \Dens_+$,
the operators $[\Gamma(\rho),\rho]\#$ and $\Gamma_*$ commute.
\end{lemma}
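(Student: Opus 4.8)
The plan is to verify the commutation relation pointwise, i.e.\ to show
\[
\Gamma_*\big([\Gamma(\rho),\rho]\# C\big) \;=\; [\Gamma(\rho),\rho]\#\big(\Gamma_*(C)\big)
\qquad\text{for all } C\in\Cl,
\]
by unwinding both definitions and using that $\Gamma$ is a $*$-automorphism with $\Gamma^2=\mathrm{id}$. So I would begin by collecting the few facts needed. Since $\rho\in\Dens_+$ is self-adjoint, each power $\rho^s$ is self-adjoint; by \eqref{eq:gr-adj} the element $\Gamma(\rho)$ is self-adjoint, and it is strictly positive (writing $\rho=B^*B+\lambda I$ gives $\Gamma(\rho)=\Gamma(B)^*\Gamma(B)+\lambda I$), so $\Gamma(\rho)^{1-s}$ is self-adjoint as well. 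Next, $\Gamma^2=\mathrm{id}$ directly on the basis, since $\Gamma^2(Q^\aa)=(-1)^{2|\aa|}Q^\aa=Q^\aa$. Finally $\Gamma$ intertwines the continuous functional calculus on positive elements, so $\Gamma(\rho^s)=\Gamma(\rho)^s$ and, applied to $\Gamma(\rho)$ in place of $\rho$, $\Gamma\big(\Gamma(\rho)^{1-s}\big)=\Gamma^2(\rho)^{1-s}=\rho^{1-s}$.

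With these in hand the computation is short. Using the self-adjointness of $\rho^s$ and $\Gamma(\rho)^{1-s}$ to take the adjoint inside the integral, and then applying $\Gamma$ termwise using \eqref{eq:gr-alghom} together with the two functional-calculus identities above,
\[
\Gamma_*\!\Big(\int_0^1 \Gamma(\rho)^{1-s} C\,\rho^s\,\dd s\Big)
=\Gamma\!\Big(\int_0^1 \rho^s\, C^*\, \Gamma(\rho)^{1-s}\,\dd s\Big)
=\int_0^1 \Gamma(\rho)^s\,\Gamma(C^*)\,\rho^{1-s}\,\dd s .
\]
The substitution $s\mapsto 1-s$ converts the last integral into $\int_0^1\Gamma(\rho)^{1-s}\,\Gamma(C^*)\,\rho^s\,\dd s=[\Gamma(\rho),\rho]\#\big(\Gamma(C^*)\big)=[\Gamma(\rho),\rho]\#\big(\Gamma_*(C)\big)$, which is exactly the asserted identity.

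I do not expect a real obstacle; the only point deserving a sentence of justification is $\Gamma(\rho^s)=\Gamma(\rho)^s$ (equivalently $\Gamma(\log\rho)=\log\Gamma(\rho)$). This can be seen by diagonalizing $\rho=\sum_i\lambda_i P_i$ with $P_i$ orthogonal projections summing to $I$: then $\Gamma(P_i)$ are again orthogonal projections summing to $I$ (from \eqref{eq:gr-alghom}, \eqref{eq:gr-adj} and $\Gamma(I)=I$), so $\Gamma(\rho)=\sum_i\lambda_i\Gamma(P_i)$ is its spectral decomposition and $\Gamma(\rho^s)=\sum_i\lambda_i^s\Gamma(P_i)=\Gamma(\rho)^s$; alternatively one invokes the polynomial-approximation trick already used in the proof of Lemma~\ref{bas}. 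Everything else — the definition of $(A,B)\#$, the definition of $\Gamma_*$, multiplicativity of $\Gamma$, and the change of variables — is routine.
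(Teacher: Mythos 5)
Your proposal is correct and follows essentially the same route as the paper: apply $\Gamma_*$ to the defining integral, reverse the order via the adjoint using self-adjointness of $\rho^s$ and $\Gamma(\rho)^{1-s}$, push $\Gamma$ through multiplicatively with $\Gamma(\rho^s)=\Gamma(\rho)^s$ and $\Gamma^2=\mathrm{id}$, and finish with the substitution $s\mapsto 1-s$. Your extra spectral-decomposition justification of $\Gamma(\rho^s)=\Gamma(\rho)^s$ is a detail the paper leaves implicit, and it is fine.
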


\begin{proof}
We compute
\begin{eqnarray}
\Gamma_*\left(\left[\Gamma(\rho),\rho\right]\# C\right)) &=&
\Gamma\left(\int_0^1 \Gamma(\rho)^{1-s} C\rho^s \dd s \right)^*\nonumber\\
&=& \Gamma\left(\int_0^1 \rho^s C^* \Gamma(\rho)^{1-s}  \dd s \right)\nonumber\\
&=&\int_0^1  \Gamma\left(\rho\right)^s  \Gamma\left(C^*\right) \rho^{1-s}  \dd s \nonumber\\
&=&\left[\Gamma(\rho),\rho\right]\#  \Gamma_*(C)\ .\nonumber
\end{eqnarray}
\end{proof}

\begin{proof}[Proof of Theorem \ref{selfad}]
Using \eqref{eq:div-adj} and Lemma \ref{gamstab} we obtain
\begin{align*}
\Big[ \dive \left( \left[\Gamma(\rho),\rho\right]\# \nabla C\right)\Big]^*
  & =  \dive\Gr_*\Big[ \left[\Gamma(\rho),\rho\right]\# \nabla C \Big]
\\& =  \dive\Big[ \left[\Gamma(\rho),\rho\right]\# \Gr_*(\nabla C) \Big]\;.
\end{align*} 
Since \eqref{eq:grad-adj} implies that
$\Gr_*(\nabla C)
   = \nabla (C^*)$,
the result follows.
\end{proof}

\begin{example} Let $\rho\in \Dens$ be given, and define $\rho_t = {\mathcal P}_t\rho_0$. Then by Lemma~\ref{bas},
$$\frac {{\rm d}}{{\rm d}t}\rho(t)
   = -{\mathcal N}\rho(t)
   =   {\rm div}(\nabla \rho(t)) 
   =   {\rm div}\left( \left(\Gamma(\rho(t)),\rho(t)\right)\#\nabla \log \rho(t) \right)\ .$$
Thus, $\rho_t$ satisfies the continuity equation
\begin{equation}\label{contform}
\frac {{\rm d}}{{\rm d}t}\rho(t)  + {\rm div}\left( \left(\Gamma(\rho(t)),\rho(t)\right)\#{\bf V}(t)\right)= 0
\end{equation}
where ${\bf V}(t) = - \nabla \log \rho(t)$. We shall soon see the significance of  the fact that ${\bf V}(t)$ is a gradient. 
\end{example}

We have seen so far that every continuously differentiable  curve $\rho(t)$ in $\Dens_+$ satisfies the  continuity equation for
at least one time dependent vector field ${\bf V(t)}$.  In fact, just as in the classical case, it satisfies the 
 continuity equation for
infinitely many such time dependent vector fields:  Consider any $\rho\in \Dens_+$ and any vector field ${\bf W}\in \Cl^n$. 
Define
\begin{equation}\label{hat1}
\widehat {\bf W} :=  \left(\Gamma(\rho),\rho\right)\widehat \# {\bf W}\ .
\end{equation}
Then by Theorem~\ref{inversion}
$$ {\rm div}({\bf W}) = 0 \quad \iff \quad  {\rm div}\left (  \left(\Gamma(\rho),\rho\right) \# \widehat{\bf W}  \right) = 0\ .$$
We have proved:

\begin{lemma}\label{lem:affine} Let $\rho\in \Dens_+$ and let $\rho(t)$ be a continuously differentiable curve in $\Dens_+$ such that
$\rho(0) = \rho$. Then, for every $t$, the sets of all vector fields ${\bf V}\in \Cl^n$ for which
$$
\dot \rho(t) + \dive\left[ \left(\Gamma(\rho(t)),\rho(t)\right)\#{\bf V}\right] = 0
$$is the affine space consisting of all ${\bf V}\in \Cl^n$ of the form
$${\bf V} = {\bf V}_0 +  \widehat {\bf W}\;, $$
where
$$ {\bf V}_0  :=  \left(\Gamma(\rho(t)),\rho(t)\right)\widehat\#\left[\nabla({\mathcal N}^{-1}  \dot \rho(t) )\right]\ ,$$
and $\widehat {\bf W} :=  \left(\Gamma(\rho(t)),\rho(t)\right)\widehat \# {\bf W}$
where 
$$\dive({\bf W}) =0\ .$$
\end{lemma}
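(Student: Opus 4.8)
The plan is to verify that the displayed affine space is exactly the solution set of the equation $\dot\rho(t) + \dive\big[(\Gamma(\rho(t)),\rho(t))\#\,{\bf V}\big] = 0$ for fixed $t$, by exploiting the invertibility of $(\Gamma(\rho(t)),\rho(t))\#$ established in Theorem~\ref{inversion}. The key observation is that the map ${\bf V}\mapsto \dive\big[(\Gamma(\rho(t)),\rho(t))\#\,{\bf V}\big]$ is affine-linear in ${\bf V}$, so its solution set (if nonempty) is a coset of the kernel of the associated linear map. Thus the proof splits into two parts: (i) exhibit one particular solution ${\bf V}_0$, and (ii) identify the kernel.

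For part (i), I would recall the construction already carried out just above the statement: since $\tr[\dot\rho(t)]=0$, the element $\dot\rho(t)$ lies in the range of $\mathcal N$, hence ${\bf A}(t) := \nabla(\mathcal N^{-1}\dot\rho(t))$ satisfies $\dot\rho(t) + \dive({\bf A}(t)) = 0$; setting ${\bf V}_0 := (\Gamma(\rho(t)),\rho(t))\widehat\#\,{\bf A}(t)$ and applying Theorem~\ref{inversion} (which says $(A,B)\#\circ(A,B)\widehat\#$ is the identity) gives $(\Gamma(\rho(t)),\rho(t))\#\,{\bf V}_0 = {\bf A}(t)$, so ${\bf V}_0$ solves the continuity equation. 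For part (ii), suppose ${\bf V}$ and ${\bf V}'$ are two solutions; then ${\bf U} := {\bf V} - {\bf V}'$ satisfies $\dive\big[(\Gamma(\rho(t)),\rho(t))\#\,{\bf U}\big] = 0$. Writing ${\bf W} := (\Gamma(\rho(t)),\rho(t))\#\,{\bf U}$, which by Theorem~\ref{inversion} is equivalent to ${\bf U} = (\Gamma(\rho(t)),\rho(t))\widehat\#\,{\bf W} = \widehat{\bf W}$, the equation becomes simply $\dive({\bf W}) = 0$. Conversely, for any divergence-free ${\bf W}$, the element ${\bf V}_0 + \widehat{\bf W}$ is again a solution, by linearity and the fact that $\dive\big[(\Gamma(\rho(t)),\rho(t))\#\,\widehat{\bf W}\big] = \dive({\bf W}) = 0$. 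This establishes the claimed description. (The equivalence $\dive({\bf W})=0 \iff \dive\big((\Gamma(\rho),\rho)\#\widehat{\bf W}\big)=0$ is precisely the displayed consequence of Theorem~\ref{inversion} recorded in \eqref{hat1} and the line following it.)

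There is essentially no hard step here; the content is a bookkeeping exercise assembling facts already proved, and indeed the excerpt's sentence "We have proved:" signals that the argument preceding the statement already contains everything. The only point requiring a moment's care is that the correspondence ${\bf U}\leftrightarrow{\bf W} = (\Gamma(\rho(t)),\rho(t))\#\,{\bf U}$ is a genuine bijection on $\Cl^n$ — this is where invertibility of $(A,B)\#$ from Theorem~\ref{inversion} is doing the real work — so that the coset of divergence-free ${\bf W}$'s pulls back bijectively to the coset ${\bf V}_0 + \{\widehat{\bf W} : \dive({\bf W})=0\}$. One should also note in passing that the construction makes sense because $(\Gamma(\rho(t)),\rho(t))\#$ and $(\Gamma(\rho(t)),\rho(t))\widehat\#$ act componentwise on $\Cl^n$, so all the matrix identities apply coordinate by coordinate. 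With these remarks the lemma follows.
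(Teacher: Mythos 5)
Your proposal is correct and follows essentially the same route as the paper, which offers no separate proof but simply assembles the preceding construction: the particular solution ${\bf V}_0 = (\Gamma(\rho(t)),\rho(t))\widehat\#\,\nabla(\mathcal{N}^{-1}\dot\rho(t))$ and the equivalence $\dive({\bf W})=0 \iff \dive\big[(\Gamma(\rho),\rho)\#\widehat{\bf W}\big]=0$ furnished by Theorem~\ref{inversion}. Your explicit identification of the kernel via the bijection ${\bf U}\leftrightarrow{\bf W}=(\Gamma(\rho(t)),\rho(t))\#\,{\bf U}$ is exactly the bookkeeping the paper's ``We have proved:'' is summarizing.
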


\begin{lemma}\label{decomp} Every ${\bf V}\in \Cl^n$ has a unique decomposition into the sum of a gradient $\nabla U$  and a divergence free
vector field ${\bf Z}$:
$${\bf V} = \nabla U + {\bf Z}\;.$$
In particular, if 
$$\tau\left[ {\bf W}^* \cdot {\bf V}\right]= 0$$
whenever ${\rm div}({\bf W}) = 0$, then ${\bf V}$ is a gradient.
\end{lemma}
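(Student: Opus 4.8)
The plan is to exhibit the gradient part of $\VV$ explicitly by solving a Poisson-type equation on $\Cl$, and then obtain the divergence-free part by subtraction. Concretely, for $\VV \in \Cl^n$ I would first like to find $U \in \Cl$ with $-\dive(\nabla U) = -\dive(\VV)$, i.e.\ $\Num U = \dive(\VV)$, where $\Num$ is the Fermionic number operator. Since $\Num = -\dive\circ\nabla = \sum_i \nabla_i^*\nabla_i$ is non-negative and self-adjoint on the finite-dimensional Hilbert space $(\Cl,\ip{\cdot,\cdot})$, its only obstruction to invertibility is its kernel, which consists of the scalars $\C I$ (the degree-zero part, since $\nabla_i(Q^\aa)=0$ for all $i$ forces $|\aa|=0$). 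So the equation $\Num U = \dive(\VV)$ is solvable precisely when $\dive(\VV) \perp I$, i.e.\ when $\ip{I,\dive(\VV)} = 0$; and this always holds because $\ip{I,\dive(\VV)} = -\sum_i \ip{I,\nabla_i^*A_i} = -\sum_i\ip{\nabla_i I, A_i} = 0$. Having produced such a $U$ (unique up to adding a scalar, which does not affect $\nabla U$), set $\ZZ := \VV - \nabla U$; then $\dive(\ZZ) = \dive(\VV) - \dive(\nabla U) = \dive(\VV) - \Num U = 0$, giving existence of the decomposition.

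For uniqueness, suppose $\nabla U_1 + \ZZ_1 = \nabla U_2 + \ZZ_2$ with both $\ZZ_j$ divergence free. Then $\nabla(U_1-U_2) = \ZZ_2-\ZZ_1$ is a gradient that is also divergence free, so applying $\dive$ gives $\Num(U_1-U_2)=0$, hence $U_1-U_2 \in \C I$, hence $\nabla(U_1-U_2)=0$, and therefore $\ZZ_1 = \ZZ_2$ as well. Equivalently, one can phrase existence and uniqueness together as the statement that $\Cl^n = \Ran(\nabla)\oplus \Ker(\dive)$ is an orthogonal direct sum decomposition with respect to the $L^2(\tau)$ inner product on $\Cl^n$: indeed $\VV \perp \Ran(\nabla)$ means $\ip{\VV,\nabla W}=0$ for all $W$, i.e.\ $\ip{\dive(\VV)\cdot(-1), W} = -\sum_i\ip{A_i,\nabla_i W} = -\sum_i \ip{\nabla_i^* A_i, W}$ wait---more cleanly, $0 = \sum_i\ip{A_i,\nabla_i W} = \sum_i \ip{\nabla_i^* A_i, W} = -\ip{\dive(\VV),W}$ for all $W$, which says exactly $\dive(\VV)=0$; so $\Ran(\nabla)^\perp = \Ker(\dive)$, and the orthogonal complement decomposition is automatic in finite dimensions.

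The final sentence of the lemma is then immediate: if $\tau[\WW^*\cdot\VV] = \ip{\WW,\VV}_{\Cl^n} = 0$ for every $\WW$ with $\dive(\WW)=0$, that says $\VV \in \Ker(\dive)^\perp = \Ran(\nabla)$, so $\VV$ is a gradient.

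I do not anticipate a genuine obstacle here; the only point requiring a small verification is that $\Ker(\Num) = \C I$ (equivalently $\Ker(\nabla) = \C I$), which follows directly from the definition $\nabla_i(Q^\aa) = Q_iQ^\aa$ if $\a_i=1$ and $0$ if $\a_i = 0$, together with the fact that the $Q^\aa$ form an orthonormal basis, so $\nabla A = 0$ forces all coefficients of $A$ on multi-indices $\aa$ with $|\aa|\geq 1$ to vanish. Everything else is a formal manipulation using the adjoint relations for $\nabla_i$ and $\nabla_i^*$ already recorded in the excerpt and the finite-dimensionality of $\Cl$.
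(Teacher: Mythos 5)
Your argument is correct and is essentially the paper's own proof: you produce the gradient part by inverting the number operator on the divergence (the paper sets $U=-\Num^{-1}\dive(\mathbf{V})$ directly), subtract to get a divergence-free remainder, obtain uniqueness from $\Ker(\Num)=\C I$ (no nonzero field is both a gradient and divergence free), and deduce the final assertion from the orthogonality $\Ran(\nabla)^{\perp}=\Ker(\dive)$ in the finite-dimensional space $\Cl^n$. The only blemish is an internal sign inconsistency: with the paper's conventions $\Num=-\dive\circ\nabla$, so $\dive(\nabla U)=-\Num U$ and you should solve $\Num U=-\dive(\mathbf{V})$ rather than $\Num U=\dive(\mathbf{V})$, which changes nothing of substance.
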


\begin{proof} Since   {\rm div}({\bf V})  is orthogonal to the nullspace of ${\mathcal N}$, we may 
define $U:=  -{\mathcal N}^{-1}{\rm div}({\bf V})$. 
Then define
$${\bf Z} := {\bf V} - \nabla U\ .$$
One readily checks that ${\bf V} = \nabla U + {\bf Z}$, and ${\rm div}({\bf Z}) =0$. 

Were the decomposition not unique, there would exist a non-zero vector field that is both a gradient and divergence free. This is impossible since the null space of ${\mathcal N}$ is spanned by $I$.
The final statement now follows easily. 
\end{proof}

The next theorem identifies the ``minimal'' vector field ${\bf V}$  such that a given smooth curve  $\rho(\cdot)$ in $\Dens_+$  satisfies the continuity equation for ${\bf V}$. 
As in the classical case, this identification is the basic step in realizing the $2$-Wasserstein distance as the distance associated to a Riemannian metric.

\begin{theorem}\label{tangent} Let $\rho\in \Dens_+$ and let $\rho(t)$ be a continuously differentiable curve in $\Dens_+$ such that
$\rho(0) = \rho$. Then among all vector fields ${\bf V}\in \Cl^n$ for which
\begin{equation}\label{aff}
\dot \rho(0) + {\rm div}\left[ \left(\Gamma(\rho),\rho\right)\#{\bf V}\right] = 0\ ,
\end{equation}
there is exactly one that is a gradient; i.e., has the from ${\bf V} = \nabla U$ for $U\in \Cl$. Moreover, there exists a self-adjoint element $S \in \Cl$ such that $\nabla U = \nabla S$, and we have
$$\tau\left[(\nabla U)^*\cdot  \left(\Gamma(\rho),\rho\right)\#\nabla U\right] <  \tau\left[{\bf V}^*\cdot  \left(\Gamma(\rho),\rho\right)\#{\bf V}\right] $$
for all other ${\bf V} \in \Cln$  satisfying (\ref{aff}).
\end{theorem}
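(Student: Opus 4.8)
\section*{Proof proposal for Theorem~\ref{tangent}}

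The plan is to recognize the statement as an orthogonal-decomposition fact once $\Cln$ is equipped with the inner product $\ipo{\cdot,\cdot}_\rho$ of Definition~\ref{rhonorm}. First I would use Lemma~\ref{lem:affine} to write the set of $\VV \in \Cln$ satisfying \eqref{aff} as $\VV_0 + L$, where $L := \{(\Gr(\rho),\rho)\widehat\#\mathbf{W} : \dive(\mathbf{W}) = 0\}$ is a complex linear subspace of the finite-dimensional space $\Cln$. The crucial observation is that \emph{the gradients are precisely the $\ipo{\cdot,\cdot}_\rho$-orthogonal complement of $L$}. Indeed, for any $C \in \Cl$ and any $\mathbf{W}$ with $\dive(\mathbf{W}) = 0$, the facts that $(\Gr(\rho),\rho)\#$ and $(\Gr(\rho),\rho)\widehat\#$ are mutually inverse (Theorem~\ref{inversion}) and that $\nabla_i^*$ is the $L^2(\tau)$-adjoint of $\nabla_i$ give
\[
 \ipo{\nabla C,\ (\Gr(\rho),\rho)\widehat\#\mathbf{W}}_\rho
 = \sum_{i=1}^n \tau\big[(\nabla_i C)^* W_i\big]
 = \tau\Big[C^* \sum_{i=1}^n \nabla_i^* W_i\Big]
 = -\,\tau\big[C^*\dive(\mathbf{W})\big] = 0 .
\]
Reading the same chain of equalities in reverse (together with $\tau(A^*)=\overline{\tau(A)}$), a vector field $\VV$ is $\ipo{\cdot,\cdot}_\rho$-orthogonal to $L$ exactly when $\tau[\mathbf{W}^*\cdot \VV] = 0$ for every divergence-free $\mathbf{W}$, and by Lemma~\ref{decomp} this forces $\VV$ to be a gradient; conversely every gradient is $\ipo{\cdot,\cdot}_\rho$-orthogonal to $L$ by the display above.

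Granting this, existence and uniqueness of a gradient in $\VV_0 + L$ is the elementary fact that a closed affine subspace of a Hilbert space meets the orthogonal complement of its direction space in exactly one point --- namely the $\ipo{\cdot,\cdot}_\rho$-norm minimizer over $\VV_0 + L$, which exists and is unique because $\|\cdot\|_\rho$ is a genuine norm on the finite-dimensional space $\Cln$ (Lemma~\ref{pos}). Write this unique gradient as $\nabla U$. For the self-adjoint representative, I would note that $\dot\rho(0)$ is self-adjoint, being the derivative of a curve of self-adjoint elements, so Theorem~\ref{selfad} gives $\dive\big[(\Gr(\rho),\rho)\#\nabla(U^*)\big] = \big(\dive[(\Gr(\rho),\rho)\#\nabla U]\big)^* = -\dot\rho(0)$; hence $\nabla(U^*)$ also satisfies \eqref{aff} and is a gradient, so by uniqueness $\nabla(U^*) = \nabla U$, and $S := \tfrac12(U + U^*)$ is self-adjoint with $\nabla S = \tfrac12(\nabla U + \nabla(U^*)) = \nabla U$.

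Finally, the strict inequality is just the Pythagorean theorem: any other admissible $\VV$ equals $\nabla U + \UU$ with $\UU \in L \setminus \{0\}$, and since $\nabla U$ is $\ipo{\cdot,\cdot}_\rho$-orthogonal to $L$,
\[
 \tau\big[\VV^*\cdot (\Gr(\rho),\rho)\#\VV\big] = \|\VV\|_\rho^2 = \|\nabla U\|_\rho^2 + \|\UU\|_\rho^2 > \|\nabla U\|_\rho^2 = \tau\big[(\nabla U)^*\cdot (\Gr(\rho),\rho)\#\nabla U\big],
\]
where $\|\UU\|_\rho > 0$ by Lemma~\ref{pos}, since $\UU = (\Gr(\rho),\rho)\widehat\#\mathbf{W}$ with $\mathbf{W} \neq 0$ (note this computation also re-proves uniqueness of the gradient). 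The step needing the most care is the orthogonality identity in the first paragraph: one must keep straight the interaction of the two inner products --- the $\rho$-twisted one on vector fields and the plain $L^2(\tau)$ pairing used in the Hodge-type decomposition of Lemma~\ref{decomp} --- exploit that the $(\Gr(\rho),\rho)\#$ sitting inside the divergence in \eqref{aff} is exactly cancelled by the $(\Gr(\rho),\rho)\widehat\#$ describing $L$, and handle sesquilinearity so that minimizing the real quantity $\|\cdot\|_\rho^2$ over the complex subspace $L$ yields genuine complex orthogonality. Everything else is routine bookkeeping.
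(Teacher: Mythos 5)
Your proposal is correct and follows essentially the same route as the paper: projection with respect to the $\langle\cdot,\cdot\rangle_\rho$-inner product onto the affine set of Lemma \ref{lem:affine}, with Theorem \ref{inversion} and Lemma \ref{decomp} identifying gradients with the orthogonal complement of the space of fields $(\Gamma(\rho),\rho)\widehat\#{\bf W}$, $\dive({\bf W})=0$, and Theorem \ref{selfad} together with uniqueness producing the self-adjoint representative. The differences are only organizational: you verify the orthogonality identity directly and conclude by Pythagoras, whereas the paper obtains $\Re\,\tau[{\bf W}^*\cdot{\bf V}_\star]=0$ as a first-order condition at the norm minimizer and gets uniqueness from strict convexity, and your $\nabla(U^*)$ argument replaces the paper's computation that $\dive\left[(\Gamma(\rho),\rho)\#\nabla A\right]=0$ for $A=\tfrac12(U-U^*)$ — the substance is identical.
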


\begin{proof}  By what we proved in the last subsection, ${\bf V}\mapsto \sqrt{\tau\left[{\bf V}^*\cdot  \left(\Gamma(\rho),\rho\right)\#{\bf V}\right] }$
is an Hilbertian norm on $\Cl^n$. By the Projection Lemma, there is a unique element in the closed convex, in fact, affine,  set
$${\mathcal V} := \left\{ \; {\bf V}\in \Cl^n\ :\  \dot \rho(0) + {\rm div}\left[ \left(\Gamma(\rho),\rho\right)\#{\bf V}\right] = 0\ \right\}$$
of minimal norm. Note that $\mathcal{V}$ is non-empty by Lemma \ref{lem:affine}. Let ${\bf V}_\star$ denote the minimizer. Then by the previous lemma, for each $t\in \R \setminus\{ 0\}$,  and each nonzero ${\bf W}$ such that
${\rm div}({\bf W}) = 0$, 
$$ \tau\left[({\bf V}_\star)^*\cdot  \left(\Gamma(\rho),\rho\right)\#{\bf V}_\star\right] < \tau\left[({\bf V}_\star + t \widehat{\bf W})^*\cdot  \left(\Gamma(\rho),\rho\right)\#({\bf V}_\star + t \widehat{\bf W})\right] \ ,
$$
where $\widehat{\bf W}$ is defined by (\ref{hat1}). Expanding to first order in $t$, and applying Theorem~\ref{inversion}, we conclude
$$\mathfrak{Re}\left( \tau[ {\bf W}^*\cdot {\bf V}_\star ]\right)= 0$$
whenever ${\rm div}({\bf W}) =0$.  Replacing ${\bf W}$ by $i{\bf W}$, we obtain the same conclusion for the imaginary part. By Lemma~\ref{decomp}, this means that ${\bf V}_\star = \nabla U$ for some $U\in \Cl$. 

The proof we have just given shows that in fact any gradient vector field 
in our affine set ${\mathcal V}$  would be a  critical point on the squared norm. But by the strict convexity of the squared norm, there can be only one
critical point. Hence $\nabla U$ is the unique gradient in ${\mathcal V}$.

It remains to show that there exists a self-adjoint element $S \in \Cl$ such that $\nabla U = \nabla S$. For this purpose, we define $S = \frac12 (U + U^*)$ and $A = \frac12(U - U^*)$.
It then suffices to show that $\nabla A = 0$. 
To simplify notation, set $T(C) := \dive\left[ \left(\Gamma(\rho),\rho\right)\#{\nabla C}\right]$.
Using Theorem \ref{selfad} and the fact that $T(U) = -\dot\rho(0)$ is self-adjoint, we infer that
\begin{align*}
 T(S) + T(A) 
 = T(U)
 = T(U^*)
 = T(S^* + A^*)
 = T(S) - T(A)\;,
\end{align*}
hence $\dive\left[ \left(\Gamma(\rho),\rho\right)\#{\nabla A}\right]  = T(A) = 0$. Since we just proved that $\nabla U$ is the unique minimizer in $\mathcal{V}$, we infer that $\nabla A = 0$.
\end{proof}

\subsection{The Riemannian metric}

Theorems \ref{selfad} and \ref{tangent} allow us to identify the tangent space of $\Dens_+$ with the $2^n-1$ dimensional real vector space consisting of all vector fields in $\Cl^n$ which are gradients of self-adjoint elements in $\Cl$:  If $\rho(t)$ is a continuously differentiable curve in $\Dens_+$ with $\rho(0) =\rho\in \Dens_+$, we identify the corresponding tangent vector
with $\nabla U$, where $\nabla U$ is the unique gradient such that (\ref{aff}) is satisfied. We are ready for the central definition:

\begin{definition}
Let $\rho\in \Dens_+$, and let $T_\rho$ denote the tangent space to $\Dens_+$ at $\rho$. The positive definite quadratic from $g_\rho$
on $T_\rho$ is defined by
$$g_\rho(\dot \rho(0), \dot \rho(0)) := \tau\left[(\nabla U)^*\cdot  \left(\Gamma(\rho),\rho\right)\#\nabla U\right] $$
where $\nabla U$ is the unique gradient such that (\ref{aff}) is satisfied. 
\end{definition}

By what we have explained above, this is in fact a Riemannian metric, and indeed is smooth on the manifold $\Dens_+$. 
Let $F$ be a smooth real valued function on $\Dens_+$.  Then the gradient of $F$, denoted ${\rm grad}_\rho (F)$ is the unique vector field
on $\Dens_+$ such that 
whenever $\rho(t)$ is a smooth curve in $\Dens_+$ with $\rho(0) = \rho$,
$$
\frac{{\rm d}}{{\rm d}t} F(\rho(t))\bigg|_{t=0} = g_\rho({\rm grad}_\rho(F),\dot \rho(0))\ .$$

In particular, suppose that $f$ is a real valued, continuously differentiable function on $(0,\infty)$, and $F$ is given by
$$F(\rho) = \tau[f(\rho)]\ .$$
Then by the Spectral Theorem,
$$\frac{{\rm d}}{{\rm d}t} F(\rho(t))\bigg|_{t=0} = \tau[ f'(\rho) \dot\rho(0)]\ .$$
Writing $$\dot \rho(0) +{\rm div}\left( \left(\Gamma(\rho),\rho\right)\#\nabla U\right) = 0\ ,$$
and integrating by parts, this becomes
\begin{eqnarray}
\frac{{\rm d}}{{\rm d}t} F(\rho(t))\bigg|_{t=0} &=& \tau\left [
 \left(\nabla\left(  f'(\rho)\right) \right)^*  \cdot  \left(\Gamma(\rho),\rho\right)\#\nabla U \right]\nonumber\\
&=& g_\rho\left(\nabla\left(  f'(\rho)\right) , \nabla U\right)\ .\nonumber\\
\end{eqnarray}

This computation shows that for a function $F$ on $\Dens_+$ of the form $F(\rho) = \tau[f(\rho)]$, 
\begin{equation}\label{gradform}
{\rm grad}_\rho F = \nabla f'(\rho)\ .
\end{equation}

\begin{definition}\label{gradflow} Given a function $F$ on $\Dens_+$ of the form  $F(\rho) = \tau[f(\rho)]$ where $f$ is smooth on $(0,\infty)$,
the {\em gradient flow equation for $F$ on} $\Dens_+$ is the evolution  equation
$$\frac{{\rm d}}{{\rm d}t} \rho(t) + {\rm div}\left[  \left(\Gamma(\rho(t)),\rho(t)\right)\# \left(-{\rm grad}_{\rho(t)} F\right)  \right] = 0\ ,$$
which by (\ref{gradform}) is equivalent to 
\begin{equation}\label{gradform2}
\frac{{\rm d}}{{\rm d}t} \rho(t) = {\rm div}\left[  \left(\Gamma(\rho(t)),\rho(t)\right)\# \left(\nabla f'(\rho(t))\right)  \right]  \ .
\end{equation}
\end{definition}

We have now completed the work required to prove our first main result:

\begin{theorem}\label{Mehlergf} The flow given by the Fermionic Mehler semigroup is the same as the gradient flow 
$$\frac{{\rm d}}{{\rm d}t} \rho(t) + {\rm div}\left[  \left(\Gamma(\rho(t)),\rho(t)\right)\# \left(-{\rm grad}_{\rho(t)} S\right)  \right] = 0\ ,$$
where $S(\rho)$ is the relative entropy function $\tau [\rho \log \rho]$. 
\end{theorem}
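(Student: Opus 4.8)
The plan is to recognize Theorem~\ref{Mehlergf} as an essentially immediate consequence of the machinery already assembled, specifically the computation in \eqref{gradform2} applied to the choice $f(x) = x\log x$. First I would observe that for this $f$ one has $f'(x) = \log x + 1$, so that $\nabla f'(\rho) = \nabla(\log\rho + I) = \nabla\log\rho$, since $\nabla I = 0$ (indeed $\nabla_i(Q^{\boldsymbol 0}) = 0$). Hence the gradient flow equation \eqref{gradform2} for $F = S$ reads
$$\frac{{\rm d}}{{\rm d}t}\rho(t) = {\rm div}\left[\left(\Gamma(\rho(t)),\rho(t)\right)\#\nabla\log\rho(t)\right]\ .$$

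Next I would invoke Lemma~\ref{bas}, which gives precisely the identity $\nabla_i\rho = \left(\Gamma(\rho),\rho\right)\#\nabla_i\log\rho$ componentwise, hence $\nabla\rho = \left(\Gamma(\rho),\rho\right)\#\nabla\log\rho$ as vector fields in $\Cl^n$. Substituting this into the right-hand side above collapses the gradient flow equation to
$$\frac{{\rm d}}{{\rm d}t}\rho(t) = {\rm div}(\nabla\rho(t)) = -\sum_{i=1}^n\nabla_i^*\nabla_i\rho(t) = -{\mathcal N}\rho(t)\ ,$$
where the last equality is just the definition of the Fermionic number operator ${\mathcal N}$ via the Dirichlet form ${\mathcal F}(B,A) = \tau((\nabla B)^*\cdot\nabla A) = \ip{B,{\mathcal N}A}$, so that ${\mathcal N} = -{\rm div}\circ\nabla = \sum_i\nabla_i^*\nabla_i$. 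This is exactly the Fermionic Fokker-Planck equation \eqref{FFPE}, whose solution semigroup is by definition the Fermionic Mehler semigroup ${\mathcal P}_t = e^{-t{\mathcal N}}$. This is also precisely the content of the Example following Theorem~\ref{selfad}, which already displays $\rho_t = {\mathcal P}_t\rho_0$ as satisfying the continuity equation with ${\bf V}(t) = -\nabla\log\rho(t)$; one should remark that $-\nabla\log\rho$ is a gradient, indeed the gradient of the self-adjoint element $-\log\rho$, so by the uniqueness clause of Theorem~\ref{tangent} it is \emph{the} tangent vector in the sense of the Riemannian structure, which is what makes the identification with $-{\rm grad}_\rho S$ legitimate rather than merely formal.

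There is essentially no hard step here — the theorem is the payoff of the preceding constructions — but the one point requiring care is the well-definedness of everything on the boundary and the identification of the semigroup trajectory with a curve in $\Dens_+$. Specifically, one should note that for $\rho_0 \in \Dens_+$, the solution $\rho_t = {\mathcal P}_t\rho_0$ remains in $\Dens_+$ for all $t \geq 0$ (strict positivity is preserved, as follows from the Fermionic analog of Mehler's formula cited from \cite{CL1}, together with $\tau(\rho_t) = 1$ since ${\mathcal N}I = 0$), so that the operations $\left(\Gamma(\rho_t),\rho_t\right)\#$ and $\log\rho_t$ are legitimate and the curve lies in the manifold on which $g_\rho$ is defined. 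With that in place the proof is a two-line substitution: Lemma~\ref{bas} turns the gradient flow equation into the Fokker-Planck equation, and conversely. I would write it as: by \eqref{gradform} with $f(x) = x\log x$ we have ${\rm grad}_\rho S = \nabla\log\rho$; by Lemma~\ref{bas}, $\left(\Gamma(\rho),\rho\right)\#\nabla\log\rho = \nabla\rho$; therefore the stated gradient flow equation is $\dot\rho = {\rm div}(\nabla\rho) = -{\mathcal N}\rho$, which is \eqref{FFPE}, whose flow is ${\mathcal P}_t$. \qed
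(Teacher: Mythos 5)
Your proposal is correct and follows essentially the same route as the paper: apply the gradient formula \eqref{gradform} with $f(r)=r\log r$ to get ${\rm grad}_\rho S=\nabla\log\rho$, then use Lemma~\ref{bas} (equivalently, the Example containing \eqref{contform}) to identify the resulting equation with $\dot\rho=-\mathcal{N}\rho$, i.e.\ the Fermionic Fokker--Planck/Mehler flow. Your additional remarks on self-adjointness of $-\log\rho$ and preservation of strict positivity are harmless elaborations of points the paper handles elsewhere.
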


\begin{proof}
Note that $S(\rho) = \tau \left[ f(\rho)\right]$ where  $f(r) = r \log r$. Since $f'(\rho ) = 1 + \log \rho$, we have
$$ {\rm div}\left[  \left(\Gamma(\rho),\rho\right)\# \left({\rm grad}_\rho S\right)  \right]  =
{\rm div}\left[  \left(\Gamma(\rho),\rho\right)\# \left(\nabla \log \rho \right)  \right]\ .$$
Comparison with (\ref{contform}) concludes the proof. 
\end{proof}

This shows once more that if $\rho\in \Dens$, and $\rho(t) := {\mathcal P}_t\rho$, then $S(\rho(t))$ is a strictly decreasing function of $t$
with $\lim_{t\to \infty}S(\rho(t)) =0$. In fact, one can say more: Reversing the steps in the basic computation that led us to 
to the definition of the Riemannian metric, we have 
\begin{eqnarray}\label{diss}
g_\rho(t)(\dot \rho(t), \dot \rho(t))  &=&   \tau \left[ (\nabla \log \rho(t))^* \cdot \left(\Gamma(\rho(t)),\rho(t)\right)\# \nabla \log \rho(t)  \right]\nonumber\\
&=& \tau [(\nabla \log \rho(t) )^* \cdot \nabla \rho(t)]\nonumber\\
&=& -\frac{{\rm d}}{{\rm d t}}S(\rho(t))\ .
\end{eqnarray}
The next lemma quantifies the rate of dissipation of entropy:

\begin{lemma}[Exponential entropy dissipation]\label{entdiss}
Let $\rho(t)$ be any solution of the Fermionic Fokker-Planck equation.  Then
\begin{equation}\label{diss2}
S(\rho(t)) \leq e^{-2t}S(\rho(0))\ .
\end{equation}  
\end{lemma}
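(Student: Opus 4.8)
The plan is to prove the exponential decay $S(\rho(t)) \le e^{-2t} S(\rho(0))$ by a Gronwall-type argument, exactly as in the classical case. The key point is that the rate of entropy dissipation, which by \eqref{diss} equals $g_{\rho(t)}(\dot\rho(t),\dot\rho(t)) = \tau[(\nabla\log\rho(t))^* \cdot \nabla\rho(t)]$, should dominate $2 S(\rho(t))$; this is precisely a \emph{logarithmic Sobolev inequality} for the Fermionic Mehler semigroup. Concretely, what one needs is the inequality
$$\tau\big[(\nabla\log\rho)^* \cdot \nabla\rho\big] \ge 2\, \tau[\rho\log\rho] = 2 S(\rho)$$
for all $\rho \in \Dens_+$. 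This is the known sharp logarithmic Sobolev inequality for the Clifford algebra (Gross, and its sharp form as used in \cite{CL1}), written in the "entropy production" form: the left side is the Fisher information $\tau[(\nabla\log\rho)^*\cdot\nabla\log\rho \cdot \rho]$-type quantity associated with the Dirichlet form $\mathcal F$, since $\tau[(\nabla\log\rho)^*\cdot\nabla\rho] = \mathcal F(\log\rho,\rho)$.

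First I would recall from \eqref{diss} that along a solution $\rho(t)$ of the Fermionic Fokker-Planck equation, $-\frac{\rm d}{{\rm d}t} S(\rho(t)) = \tau[(\nabla\log\rho(t))^* \cdot \nabla\rho(t)]$. Then I would invoke the logarithmic Sobolev inequality for $\mathcal N$ (the Fermionic number operator), in the form $\mathcal F(\log\rho,\rho) \ge 2 S(\rho)$, to conclude
$$\frac{\rm d}{{\rm d}t} S(\rho(t)) \le -2 S(\rho(t))\;.$$
Since $S(\rho(t)) \ge 0$ for all $t$ (as $\rho\log\rho$ has trace bounded below by Jensen, and in fact $S \ge 0$ on $\Dens$), integrating this differential inequality via Gronwall gives $S(\rho(t)) \le e^{-2t} S(\rho(0))$, which is \eqref{diss2}.

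The main obstacle — really the only substantive point — is justifying the logarithmic Sobolev inequality $\mathcal F(\log\rho,\rho) \ge 2S(\rho)$ in this operator-theoretic "dissipation" form. The standard statement of the sharp LSI for the Clifford algebra (from \cite{CL1}) is typically phrased for $A \in \Cl$ as $\tau[A^*A \log(A^*A)] - \tau[A^*A]\log\tau[A^*A] \le \frac{2}{\text{(const)}}\,\mathcal F(A,A)$ or equivalently in terms of $\|\cdot\|_p$-norms via hypercontractivity; translating this into the form involving $\rho = A^*A$ and $\nabla\log\rho$ requires the standard equivalence between the log-Sobolev inequality and the exponential decay of entropy along the semigroup (the "de Bruijn identity" plus Gronwall, run in reverse). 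I would either cite \cite{CL1} directly for the entropy-decay statement \eqref{diss2} — in which case the present lemma is essentially a restatement — or else insert a short paragraph deriving the dissipation-form LSI from the hypercontractivity bound of \cite{CL1} by differentiating $\|\mathcal P_t A\|_{p(t)}$ at the critical time, as in Gross's original argument. Given the style of the paper, citing \cite{CL1} for the sharp LSI and then applying \eqref{diss} and Gronwall is the cleanest route, and I would also note for completeness that $S(\rho)\ge 0$ follows from the operator Jensen inequality $\tau[\rho\log\rho] \ge \tau[\rho]\log\tau[\rho] = 0$.
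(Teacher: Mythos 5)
Your proposal is correct and follows essentially the same route as the paper: the entropy dissipation identity \eqref{diss}, the logarithmic Sobolev inequality in the ``entropy production'' form $S(\rho)\le\frac12\,\tau[(\nabla\rho)^*\cdot\nabla\log\rho]$ (the paper's modified Fermionic LSI \eqref{sfls2}), and Gronwall. The only difference is sourcing of that key inequality: the paper cites \cite{CL2} for a direct proof and, in a remark, derives it from the unmodified LSI \eqref{sfls} of \cite{CL1} via Gross's comparison inequality \eqref{grosslem2} (needed because $4(\nabla\sqrt{\rho})^*\cdot\nabla\sqrt{\rho}=(\nabla\log\rho)^*\cdot\nabla\rho$ is only an inequality in the non-commutative setting), rather than by re-running the hypercontractivity differentiation you sketch.
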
  
\begin{proof}  This is a direct consequence of Gronwall's inequality, (\ref{diss}), and the modified 
Fermionic Logarithmic Sobolev Inequality
\begin{equation}\label{sfls2}
S(\rho) \leq  \frac12  \tau\left[ \left(\nabla \rho \right)^*\cdot  \nabla  \log \rho\right] \ ,
\end{equation}
for which a simple direct proof is provided in \cite{CL2}. 
\end{proof}

\begin{remark} It is worth noting here that (\ref{sfls2}) can be deduced from the (unmodified)  Fermionic Logarithmic Sobolev Inequality
\begin{equation}\label{sfls}
S(\rho) \leq 2  {\mathcal F}(\rho^{1/2},\rho^{1/2}) \ ,
\end{equation}
that was proved in \cite{CL1}.  To see that (\ref{sfls}) implies (\ref{sfls2}), we recall 
a basic inequality of Gross (see Lemma 1.1 of \cite{G75}), which says that for all $\rho\in \Dens$, and all
$1 < p < \infty$, 
\begin{equation}\label{grosslem}
\tau\left[ \left( \nabla \rho^{p/2}\right)^*\cdot  \nabla \rho^{p/2}\right] \leq \frac{(p/2)^2}{p-1}  \tau\left[ \left(\nabla \rho \right)^*\cdot  \nabla \rho^{p-1}\right] \ .
\end{equation}
Taking the limit $p\to 1$, one obtains the corollary:
\begin{equation}\label{grosslem2}
{\mathcal F}(\rho^{1/2},\rho^{1/2}) 
  = \tau\left[\left( \nabla \rho^{1/2}\right)^*
      \cdot  \nabla \rho^{1/2}\right]
    \leq \frac14  \tau\left[ \left(\nabla  \log \rho\right)^*
 \cdot  \nabla  \rho\right] \ .
\end{equation}
Combining this with (\ref{sfls})
we obtain  (\ref{sfls2}).
\end{remark}

\section{A Talagrand inequality and the diameter of $\Dens$}

\subsection{Arclength, entropy and a Talagrand inequality}

We begin our study of properties of the Riemannian manifold $\Dens_+$ equipped with the metric $g_\rho$ defined in the previous section.

\begin{definition} Let $t\mapsto \rho(t)$ be a continuously differentiable  curve in $\Dens_+$ defined  on $(a,b)$ where
$-\infty \leq a < b \leq +\infty$. Then the {\em arclength} of the curve $\rho(\cdot)$,  ${\rm arclength}[\rho(\cdot))]$, is given by 
 $${\rm arclength}[\rho(\cdot)]:= \int_a^b \sqrt{ g_{\rho(t)}(\dot \rho(t), \dot \rho(t))}\dd t\ .$$
 \end{definition}
 
 Of course, the arc length is independent of the smooth parameterization, and it is always possible to smoothly  reparameterize so that 
 $a =0$ and $b=1$. As usual, this is taken advantage of in the next (standard) definition:

\begin{definition} For $\rho_0,\rho_1\in \Dens_+$, the set  ${\mathcal C}(\rho_0,\rho_1)$ of all {\em couplings of $\rho_0$ and $\rho_1$}  
is the set of all maps $t \mapsto \rho(t)$ from $[0,1]$ to $\Dens_+$ that are smooth on $(0,1)$, continuous on $[0,1]$
 and satisfy $\rho(0)= \rho_0$ and $\rho(1)=  \rho_1$.  The   Riemannian distance between $\rho_0$ and $\rho_1$ is the quantity
 \begin{equation}\label{dist}
 d(\rho_0,\rho_1) =\\ \inf \left\{   \ {\rm arclength}[\rho(\cdot)]\ : \rho(\cdot) \in  {\mathcal C}(\rho_0,\rho_1)\ \right\}\ .
 \end{equation}
In what follows, when we refer to the Riemannian distance on $\Dens_+$, we always mean the distance defined in (\ref{dist}).
\end{definition}

Writing things out more explicitly, for any two $\rho_0,\rho_1\in \Dens_+$,
$$d(\rho_0,\rho_1) = \inf \left\{  \int_0^1 \sqrt{ g_{\rho(t)}(\dot \rho(t), \dot \rho(t))}\dd t\ :\ \rho(\cdot) \in  {\mathcal C}(\rho_0,\rho_1)\ \right\}\ .$$
Yet somewhat more explicitly,
\begin{align*}
d(\rho_0,\rho_1) = \inf \bigg\{  \int_0^1 \|\nabla U(t)\|_{\rho(t)}\dd t\   :\  & \rho(\cdot) \in  {\mathcal C}(\rho_0,\rho_1)\ ,\  \\& \dot\rho(t) + {\rm div}((\Gamma(\rho(t)),\rho(t))\#\nabla U(t)) = 0  \bigg\}\ .
\end{align*}
where
$$\|\nabla U(t)\|_{\rho(t)} := \sqrt{ \tau [ \left(\nabla U(t) \right)^*\left(\Gamma(\rho(t)),\rho(t)\right)\#\nabla U(t)] }\ .$$
This is a direct analog of the Brenier-Benamou formula for the $2$-Wasserstein distance \cite{BB00}, which in turn follows from Otto's Riemannian
interpretation of the $2$-Wasserstein distance \cite{O01}.

Our first goal is to bound the diameter of $\Dens_+$ in the Riemannian metric.   We do this using a Fermionic analog of Talagrand's Gaussian transportation  inequality \cite{Tal96}. The direct connection between logarithmic Sobolev inequalities and Talagrand inequalities was discovered by Otto and Villani \cite{OV00}. Our argument in the present setting uses their ideas, but is also somewhat different.

\begin{theorem}[Talagrand type inequality]\label{tal}  For all $\rho\in \Dens_+$, 
\begin{equation}\label{tal1}
d(\rho,I) \leq  \sqrt{2S(\rho)}\ .
\end{equation}
\end{theorem}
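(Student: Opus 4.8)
\textbf{Proof proposal for Theorem \ref{tal}.}

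The plan is to follow the Otto--Villani strategy of running the Fermionic Fokker--Planck flow and integrating the infinitesimal displacement against entropy dissipation, but in the Riemannian form adapted to our metric. Let $\rho \in \Dens_+$ be given and let $\rho(t) := \mathcal{P}_t \rho$ be the solution of the Fermionic Fokker--Planck equation with $\rho(0) = \rho$. By Theorem~\ref{Mehlergf} this is the gradient flow of $S$, and by \eqref{contform} it satisfies the continuity equation with tangent vector field $\mathbf{V}(t) = -\nabla \log \rho(t)$, which is already a gradient, hence is \emph{the} minimal one. Therefore
$$\sqrt{g_{\rho(t)}(\dot\rho(t),\dot\rho(t))} = \|\nabla \log \rho(t)\|_{\rho(t)}\;.$$
Since $\rho(t) \to I$ as $t \to \infty$ (and $S(\rho(t)) \to 0$), the curve $t \mapsto \rho(t)$ on $[0,\infty)$, suitably reparameterized to $[0,1]$, is an admissible coupling of $\rho$ and $I$ in the sense of the definition preceding \eqref{dist} (one must check the endpoint $\rho = I$ is reached in the limit and that the arclength integral is what controls $d$, which is exactly the reparameterization-invariance remark). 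Consequently
$$d(\rho, I) \leq \int_0^\infty \|\nabla \log \rho(t)\|_{\rho(t)}\,\dd t = \int_0^\infty \sqrt{-\tfrac{\mathrm{d}}{\mathrm{d}t} S(\rho(t))}\,\dd t\;,$$
where the last equality is precisely \eqref{diss}.

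Now I would estimate this integral. Set $\sigma(t) := S(\rho(t)) \geq 0$, so $\sigma(0) = S(\rho)$, $\sigma(\infty) = 0$, and $\sigma' \leq 0$. The integral above is $\int_0^\infty \sqrt{-\sigma'(t)}\,\dd t$, and I want to bound it by $\sqrt{2\sigma(0)}$. The clean way is to use the exponential entropy dissipation of Lemma~\ref{entdiss}, which gives $\sigma(t) \leq e^{-2t}\sigma(0)$; but what I actually need is a bound on $-\sigma'(t)$ itself. Here I invoke the modified logarithmic Sobolev inequality \eqref{sfls2}: combining $-\sigma'(t) = \tau[(\nabla \log \rho(t))^* \cdot \nabla \rho(t)] \geq 2 S(\rho(t)) = 2\sigma(t)$ with $\sigma(t) \leq e^{-2t}\sigma(0)$ does \emph{not} directly bound $-\sigma'$ from above, so instead I write
$$\int_0^\infty \sqrt{-\sigma'(t)}\,\dd t = \int_0^\infty \frac{-\sigma'(t)}{\sqrt{-\sigma'(t)}}\,\dd t \leq \int_0^\infty \frac{-\sigma'(t)}{\sqrt{2\sigma(t)}}\,\dd t\;,$$
using $-\sigma'(t) \geq 2\sigma(t)$ in the denominator. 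The right-hand side is $\int_0^\infty -\frac{\mathrm{d}}{\mathrm{d}t}\bigl(\sqrt{2\sigma(t)}\bigr)\,\dd t = \sqrt{2\sigma(0)} - \lim_{t\to\infty}\sqrt{2\sigma(t)} = \sqrt{2S(\rho)}$, which is the claim.

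The main obstacle I anticipate is \emph{not} the final one-line integral estimate but the rigorous handling of the endpoints and regularity: one must verify that $\rho(t)$ stays in $\Dens_+$ for all $t > 0$ (true since the Fermionic Mehler semigroup is Markovian, as recalled after \eqref{FFPE}, and a spectral-gap argument keeps it strictly positive), that $t \mapsto \rho(t)$ is smooth on $(0,\infty)$, that the arclength of the reparameterized curve is finite (which is exactly what the integral bound shows, so there is no circularity), and that the curve genuinely limits to $I$ so as to count as a coupling after reparameterization to $[0,1]$ — approaching $I$ only in the limit $t\to\infty$ needs the convention that $\mathcal{C}(\rho,I)$ and the infimum in \eqref{dist} are unaffected by whether the endpoint is attained at a finite parameter value or approached, which follows from reparameterization invariance of arclength together with a short density argument. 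A secondary technical point is justifying the differentiation $\frac{\mathrm{d}}{\mathrm{d}t}S(\rho(t)) = -\tau[(\nabla\log\rho(t))^*\cdot\nabla\rho(t)]$ near $t = 0$ when $\rho$ is only strictly positive (not smooth in $t$ at the origin), but this is handled by the finite-dimensionality of $\Cl$ and smoothness of the flow for $t>0$, splitting the integral at a small $\eps>0$ and letting $\eps \to 0$.
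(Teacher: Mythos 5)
Your proof is correct, and it follows the same overall strategy as the paper: run the Fermionic Mehler flow $\rho(t)=\mathcal{P}_t\rho$, bound $d(\rho,I)$ by its arclength, use the identity \eqref{diss} to write the speed as $\sqrt{-\tfrac{\dd}{\dd t}S(\rho(t))}$, and close the estimate with the modified logarithmic Sobolev inequality \eqref{sfls2}. Where you genuinely diverge is the final integration step. You use the classical Otto--Villani pointwise argument: from $-\sigma'(t)\geq 2\sigma(t)$ you get $\sqrt{-\sigma'(t)}\leq -\tfrac{\dd}{\dd t}\sqrt{2\sigma(t)}$ and the integral telescopes to $\sqrt{2S(\rho)}$ in one line. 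The paper instead avoids this pointwise differential inequality: it applies Cauchy--Schwarz on time intervals $[t_{k-1},t_k]$ chosen so that the entropy drops by a fixed factor $e^{-\eps}$, uses the integrated (Gronwall) form of the dissipation, Lemma~\ref{entdiss}, to bound $t_k-t_{k-1}\leq\eps/2$, sums the resulting series, and lets $\eps\to0$. Your route is shorter and more transparent; the paper's discretization only needs the entropy decay in integrated form and never divides by $\sqrt{2\sigma(t)}$ or $\sqrt{-\sigma'(t)}$, so it sidesteps the (admittedly trivial, since $\rho\neq I$ forces $\sigma(t)>0$ and $-\sigma'(t)>0$ for all $t$) degeneracy issues you would want to remark on for completeness. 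On your list of anticipated obstacles: the endpoint issue (the curve reaches $I$ only as $t\to\infty$, while couplings in \eqref{dist} are parameterized on $[0,1]$ with endpoints attained) is glossed over in the paper exactly as you suspect, and your proposed fix -- concatenating with a short path, or invoking the $L^2$-continuity of $d$ as in Proposition~\ref{prop:extension} -- is adequate; by contrast, your worry about regularity at $t=0$ is unnecessary, since $\Cl$ is finite dimensional and $t\mapsto e^{-t\mathcal{N}}\rho$ is real-analytic on $[0,\infty)$, and strict positivity of $\rho(t)$ follows at once from $\rho\geq\lambda I$ and the Markov property of $\mathcal{P}_t$, as you note.
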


\begin{proof} Given $\rho\in \Dens_+$, define 
$\rho(t) = {\mathcal P}_t\rho$
for $t\in (0,\infty)$.
Since $\lim_{t\to\infty}\rho(t) = I$,  it follows that
$$d(\rho,I) \leq  {\rm arclength}[\rho(\cdot)] =   \int_0^\infty \sqrt{ g_{\rho(t)}(\dot \rho(t), \dot \rho(t))}\dd t\ .$$
By (\ref{diss}),
${\displaystyle g_{\rho(t)}(\dot \rho(t), \dot \rho(t)) = -\frac{{\rm d}}{{\rm d t}}S(\rho(t))}$
so that 
for any $0 \leq t_1 < t_2< \infty$,
\begin{equation}\label{tala}
\int_{t_1}^{t_2} \sqrt{ g_{\rho(t)}(\dot \rho(t), \dot \rho(t))}\dd t \leq \sqrt{t_2-t_1} \sqrt{S(\rho(t_1)) - S(\rho(t_2))}\ .
\end{equation}
Fix any $\epsilon>0$. Define the sequence of times $\{t_k\}$, $k \in \N$, 
$$S(\rho(t_k)) = e^{-k\epsilon}S(\rho)\ .$$
(Since $S(\rho(t))$ is strictly decreasing, $t_k$ is well defined.)
By  Lemma~\ref{entdiss}, for each $k$,

$$t_k - t_{k-1} \leq \frac{\epsilon}{2}\ .$$
Then by (\ref{tala}), with this choice of $\{t_k\}$, 
\begin{align*}
\int_{t_{k-1}}^{t_k} \sqrt{ g_{\rho(t)}{\rho(t)}(\dot \rho(t), \dot \rho(t))}\dd t 
& \ \leq \
 \sqrt{\frac{\epsilon}{2}( e^{-(k-1)\epsilon} - e^{-k\epsilon}){S(\rho)}}
\\& \ = \ \sqrt{\frac{S(\rho)}{2}} e^{-k\epsilon/2}\sqrt{ \epsilon (e^{\epsilon} -1)}\ .
\end{align*}
Since
$$\lim_{\epsilon\to 0}\left(\sum_{k=1}^\infty e^{-k\epsilon/2}\sqrt{ \epsilon (e^{\epsilon} -1)} \right) =
\lim_{\epsilon\to0}\left(\sum_{k=1}^\infty  e^{-k\epsilon/2} \epsilon \right) = \int_0^\infty e^{-x/2}\dd x =2\ ,$$
we obtain the desired bound.  
\end{proof}

\subsection{The diameter of $\Dens_+$}

Since
$$\sup\{ S(\rho)\ :\ \rho\in \Dens_+ \} = \frac{n}{2}\log 2  \ ,$$
we have proved:

\begin{lemma}\label{diamL}
\begin{align}\label{eq:diam}
{\rm diam}(\Dens_+) \leq 2\sqrt{n \log 2}\ .
\end{align}
\end{lemma}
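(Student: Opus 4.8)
The plan is to obtain Lemma~\ref{diamL} as an immediate corollary of the Talagrand-type inequality (Theorem~\ref{tal}) together with the elementary bound on the relative entropy quoted just above the statement. All of the analytic content has already gone into the proof of Theorem~\ref{tal}; what remains is a two-line argument based on the triangle inequality through the point $I$.

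First I would note that $I\in\Dens_+$, since $I$ is strictly positive and $\tau(I)=1$, so $I$ is a genuine point of the Riemannian manifold $\Dens_+$. Next I would record that the distance $d$ of \eqref{dist} is a bona fide metric: it is symmetric because reversing the parameterization of a curve does not change its arclength, and it satisfies the triangle inequality because $\Dens_+$ is connected (it is a convex open subset of the affine space of trace-one elements) and $d$ is the length distance associated to the Riemannian tensor, so concatenation of curves (after the usual smoothing at the junction) gives the triangle inequality in the standard way; in particular $d(\rho_0,\rho_1)<\infty$ for all $\rho_0,\rho_1\in\Dens_+$. Then, for arbitrary $\rho_0,\rho_1\in\Dens_+$, I would apply the triangle inequality through $I$,
$$ d(\rho_0,\rho_1)\;\leq\; d(\rho_0,I)+d(I,\rho_1)\;=\;d(\rho_0,I)+d(\rho_1,I)\;, $$
bound each term via Theorem~\ref{tal} as $d(\rho_j,I)\leq\sqrt{2S(\rho_j)}$, and invoke $S(\rho_j)\leq\sup\{S(\rho):\rho\in\Dens_+\}=\frac{n}{2}\log 2$ to get $d(\rho_j,I)\leq\sqrt{2\cdot\frac{n}{2}\log 2}=\sqrt{n\log 2}$, hence
$$ d(\rho_0,\rho_1)\;\leq\; 2\sqrt{n\log 2}\;. $$
Taking the supremum over $\rho_0,\rho_1\in\Dens_+$ yields \eqref{eq:diam}.

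There is no genuine obstacle here. The only point needing a word of justification is that $d$ is an honest metric, in particular that the triangle inequality through $I$ is available — but this is entirely standard for Riemannian length distances on a connected manifold. The entropy bound $S(\rho)\leq\frac{n}{2}\log 2$ is the elementary dimension count already invoked in the text. So the essential work is Theorem~\ref{tal}, and this lemma is just its packaging into a diameter estimate.
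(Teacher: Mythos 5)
Your proposal is correct and is essentially the paper's own argument: the lemma is obtained by combining the Talagrand inequality $d(\rho,I)\leq\sqrt{2S(\rho)}$ of Theorem~\ref{tal} with the bound $\sup_{\rho\in\Dens_+}S(\rho)=\tfrac{n}{2}\log 2$ and the triangle inequality through $I$. Your additional remarks on $d$ being a genuine length metric are fine but not a point of departure from the paper.
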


There are other ways to bound the diameter. Given $\rho\in \Dens$, define $\rho(t) = (1-t)\rho + tI$. Then $\rho(\cdot)\in {\mathcal C}(\rho,I)$,
 and $\dot\rho(t) = I - \rho$ for all $t$. As we have seen, $\rho(t)$ satisfies the continuity equation
 $$\frac{{\rm d}}{{\rm d} t} \rho(t) + {\rm div}\left[ 
   (\Gamma(\rho(t)),\rho(t))\#{\bf V}(t)\right] = 0\;,$$
 where 
 $${\bf V}(t) =  \left( \Gamma(\rho(t)),\rho(t)\right)\widehat \# \nabla({\mathcal N}^{-1}(I - \rho))\ .$$
 By the variational characterization of the tangent vector given in Theorem\ref{tangent},
 \begin{eqnarray}
 g_{\rho(t)}(\dot \rho(t),\dot \rho(t)) &\leq& \langle {\bf V}(t) , {\bf V}(t)\rangle_{\rho(t)} \nonumber\\
 &=&  \tau \left[ \left(
 \nabla{\mathcal N}^{-1}(I - \rho) \right)^* \cdot  \left( \Gamma(\rho(t),\rho(t))\widehat \# \nabla{\mathcal N}^{-1}(I - \rho)\right)\right]\;.\nonumber
 \end{eqnarray}
Since $\rho(t) \geq tI$, Lemma \ref{lem:eps-bound} implies that the right-hand side can be bounded from above by
\begin{align*}
\frac1t \tau \left[ \left(
 \nabla{\mathcal N}^{-1}(I - \rho) \right)^* \cdot 
 \nabla{\mathcal N}^{-1}(I - \rho)\right]
 & = \frac1t \tau\left[ (I-\rho){\mathcal N}^{-1}(I-\rho)\right] 
 \\& \leq \frac1t \|I - \rho\|_{L^2(\tau)}^2\ .
\end{align*}
 Thus we have the bound
 $$d(\rho,I) \leq \|I - \rho\|_{L^2(\tau)} \int_0^1 \frac{1}{\sqrt{t}}\dd t =  2\|I - \rho\|_{L^2(\tau)} \ .$$
 This, however, is a cruder bound than the one we obtained using the entropy.

\subsection{Extension of the metric  to $\Dens$}

Our next aim is to show that the distance function $d$ defined on $\Dens_+$, can be continuously extended to $\Dens$. We shall see however in Section \ref{sec:direct} that, even in dimension 1,  the Riemannian metric $g_\rho$ does not extend continuously to the boundary of $\Dens$.

\begin{proposition}\label{prop:extension}
Let $\rho_0, \rho_1 \in \Dens$ and let $\{\rho_0^n\}_n, \{\rho_1^n\}_n$ be sequences in $\Dens_+$ satisfying
\begin{equation}\label{eq:convergence}
 \tau\left[ |\rho_0^n  -\rho_0|^2\right] \to 0\;, \qquad 
 \tau\left[ |\rho_1^n  -\rho_1|^2\right] \to 0\;,
\end{equation}
as $n \to \infty$. Then the sequence $\{  d(\rho_0^n, \rho_1^n) \}_n$ is Cauchy.
\end{proposition}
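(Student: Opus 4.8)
The plan is to show that $\{d(\rho_0^n,\rho_1^n)\}_n$ is Cauchy by establishing a quantitative estimate comparing the distance between nearby points of $\Dens_+$ to their $L^2(\tau)$-distance, valid uniformly on a neighborhood of any fixed density. Combined with the triangle inequality
$$|d(\rho_0^n,\rho_1^n) - d(\rho_0^m,\rho_1^m)| \leq d(\rho_0^n,\rho_0^m) + d(\rho_1^n,\rho_1^m)\;,$$
it suffices to prove that $\{\rho_0^n\}_n$ and $\{\rho_1^n\}_n$ are each Cauchy in the metric $d$. So I reduce to the following claim: if $\{\sigma_n\}_n \subset \Dens_+$ converges in $L^2(\tau)$ to some $\sigma \in \Dens$, then $\{\sigma_n\}_n$ is $d$-Cauchy.

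To prove the claim I would build an explicit interpolating curve between $\sigma_n$ and $\sigma_m$ and bound its arclength. The natural candidate is a two-stage (or direct) construction mimicking the diameter-bound argument already carried out in Section 4.2: first run a short piece of the Fermionic Fokker-Planck flow to push both endpoints slightly into the interior, where densities are bounded below, and then use a \emph{linear} interpolation $\rho(t) = (1-t)\sigma_n + t\sigma_m$ (shifted suitably away from the boundary) between the two smoothed-out endpoints. Actually the cleanest route is the second estimate of Section 4.2 directly: for the straight-line path one has $\dot\rho(t) = \sigma_m - \sigma_n$, and by the variational characterization in Theorem~\ref{tangent} together with Lemma~\ref{lem:eps-bound} and the bound $\rho(t) \geq \min\{\lambda_n,\lambda_m\} I$ (where $\lambda_n I$ is a lower bound for $\sigma_n$), one gets
$$g_{\rho(t)}(\dot\rho(t),\dot\rho(t)) \leq \frac{1}{\min\{\lambda_n,\lambda_m\}}\,\tau\!\left[(\sigma_m-\sigma_n)\,\Num^{-1}(\sigma_m-\sigma_n)\right] \leq \frac{\|\Num^{-1}\|}{\min\{\lambda_n,\lambda_m\}}\,\|\sigma_m - \sigma_n\|_{L^2(\tau)}^2\;.$$
The trouble is that $\lambda_n \to 0$ as $\sigma_n \to \sigma$ when $\sigma$ is not strictly positive, so this crude bound alone does not close. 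The fix is to interpolate through the interior: given $\e>0$, set $\sigma_n^\e := (1-\e)\sigma_n + \e I$, which satisfies $\sigma_n^\e \geq \e I$ uniformly in $n$; connect $\sigma_n$ to $\sigma_n^\e$ by the straight segment (a path of length $O(\sqrt{\e}\,)$ by the computation above, using $\rho(t) \geq t\e I$ together with $\|\dot\rho\|_{L^2(\tau)} = \e\|I-\sigma_n\|_{L^2(\tau)} = O(\e)$, giving length $\lesssim \sqrt{\e}$), connect $\sigma_n^\e$ to $\sigma_m^\e$ by a straight segment (length $\lesssim \e^{-1/2}\|\sigma_n^\e - \sigma_m^\e\|_{L^2(\tau)} = \e^{-1/2}(1-\e)\|\sigma_n-\sigma_m\|_{L^2(\tau)}$), and connect $\sigma_m^\e$ back to $\sigma_m$ (length $\lesssim\sqrt{\e}$). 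Hence
$$d(\sigma_n,\sigma_m) \leq C\sqrt{\e} + \frac{C}{\sqrt{\e}}\,\|\sigma_n - \sigma_m\|_{L^2(\tau)}\;,$$
with $C$ depending only on $n$ (dimension), via $\|\Num^{-1}\|$ and $\sup_\rho \|I-\rho\|_{L^2(\tau)}$. Given $\delta>0$, first pick $\e$ so that $C\sqrt{\e} < \delta/2$, then use that $\|\sigma_n-\sigma_m\|_{L^2(\tau)} \to 0$ (being $L^2$-Cauchy) to make the second term $<\delta/2$ for $n,m$ large. This shows $\{\sigma_n\}$ is $d$-Cauchy.

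The main obstacle is precisely the degeneration of the lower bound $\lambda_n \to 0$ at the boundary: one cannot simply apply Lemma~\ref{lem:eps-bound} with a fixed $\e$. The resolution above — detouring through the uniformly-interior densities $\sigma_n^\e$ and optimizing over $\e$ — is the crux, and I should double-check that the short "capping" segments genuinely have length $O(\sqrt{\e}\,)$ uniformly in $n$; this uses only $\rho(t) = (1-t\e)\sigma_n + t\e I \geq t\e\,\sigma_n$... wait, more carefully $\rho(t) \geq t\e I$ is what one wants, which holds since $\rho(t) = (1-t)\sigma_n + t\sigma_n^\e = \sigma_n - t\e\sigma_n + t\e I$, and $\sigma_n - t\e \sigma_n = (1-t\e)\sigma_n \geq 0$, so $\rho(t) \geq t\e I$; then $g_{\rho(t)} \leq (t\e)^{-1}\|\Num^{-1}\| \cdot \e^2\|I-\sigma_n\|_{L^2(\tau)}^2$, whose square root integrates over $t\in[0,1]$ to $\lesssim \sqrt{\e}$, uniformly in $n$ since $\|I-\sigma_n\|_{L^2(\tau)} \leq \mathrm{diam}_{L^2}(\Dens) < \infty$. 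With these pieces assembled the proposition follows; note the argument in fact yields the stronger statement that $d$ restricted to a fixed $L^2$-ball in $\Dens_+$ is dominated by a modulus of continuity in the $L^2$-distance, which is what makes the continuous extension to $\Dens$ possible.
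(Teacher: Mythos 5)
Your argument is correct, and it uses the same basic machinery as the paper (linear interpolation, the lower bound $\rho(t)\geq t\eps I$, Lemma \ref{lem:eps-bound}, the variational bound on $g_\rho$ via the admissible field $(\Gamma(\rho),\rho)\widehat\#\,\nabla\Num^{-1}\dot\rho$ from Theorem \ref{tangent}, and the fact that $\Num\geq 1$ on traceless elements), but the path decomposition is genuinely different. The paper joins each $\rho_0^n$ by a \emph{single} straight segment to the fixed interior density $\bar\rho=(1-\eps)\rho_0+\eps I$ built from the limit; the $L^2(\tau)$-closeness of $\rho_0^n$ to $\rho_0$ is used to make $\dot\rho=\bar\rho-\rho_0^n$ of size $O(\eps)$, so that segment already has length $O(\sqrt\eps)$ and the Cauchy property follows with $d(\rho_0^n,\rho_0^m)\leq 2C\sqrt{\eps}$. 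You instead never route through the limit: you cap each $\sigma_n$ with $(1-\eps)\sigma_n+\eps I$ (length $O(\sqrt\eps)$, uniformly in $n$ since $\Dens$ is $L^2(\tau)$-bounded) and cross in the uniformly interior region, obtaining the quantitative estimate $d(\sigma_n,\sigma_m)\leq C\sqrt\eps+C\eps^{-1/2}\|\sigma_n-\sigma_m\|_{L^2(\tau)}$. This buys slightly more than the statement requires: a uniform modulus of continuity of $d$ with respect to the $L^2(\tau)$-distance on $\Dens_+$ (of H\"older type $1/2$ after optimizing in $\eps$), whereas the paper's one-segment argument is shorter but tied to the particular limit $\rho_0$. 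Two small points to tidy up: since the couplings in the definition of $d$ are single smooth curves, either invoke the triangle inequality for $d$ on your three segments (which is what your phrasing implicitly does) or reparametrize the concatenation to be smooth; and note that on the traceless subspace $\|\Num^{-1}\|=1$, so your constants indeed depend only on the dimension.
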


\begin{proof}
By the triangle inequality, it suffices to show that $d(\rho_0^n, \rho_0^m) \to 0$ as $n, m\to \infty$. 

For this purpose, we fix $\eps \in (0,1)$, set $\bar\rho := (1- \eps)\rho_0 + \eps I$, and take $N \geq 1$ so large that $ \tau\left[ |\rho_0^n  -\rho_0|^2\right] \leq \eps^2$ whenever $n \geq N$. 
Fix $n \geq N$ and consider the linear interpolation $\rho(t) = (1-t) \rho_0^n + t \bar \rho$. Since $\rho(t) \geq t \eps I$ for $t \in [0,1]$, it follows from the definition of $d$ and Lemma \ref{lem:eps-bound} that
\begin{align*}
 d(\rho_0^n, \bar \rho) 
& \leq \int_0^1 
  \sqrt{ \tau [ \dot \rho(t) \cdot \big(\Gamma(\rho(t)),\rho(t)\big)\widehat \#\dot \rho(t)] } \dd t 
\\& \leq \int_0^1  \sqrt{
\frac{ \tau [ |\dot \rho(t)|^2 ] }{t \eps}
    } \dd t \;.
\end{align*}
Since
\begin{align*}
 \tau \big[ |\dot \rho(t)|^2 \big] 
    & = \tau \left[ |\rho_0 - \rho_0^n  + \eps(I - \rho_0)|^2 \right] 
    \\& \leq  2 \tau \left[ |\rho_0 - \rho_0^n |^2 \right] 
         + 2 \eps^2 \tau \left[| I - \rho_0|^2 \right] 
    \\& \leq 2\Big(1 +  \tau \left[| I - \rho_0|^2 \right] \Big) \eps^2\;,     
\end{align*}
we infer that $d(\rho_0^n, \bar \rho) \leq C \sqrt{\eps}$ for some $C$ depending only on $\rho_0$. It follows that $d(\rho_0^n, \rho_0^m) \leq 2C \sqrt{\eps}$ for $n, m \geq N$, which completes the proof.
\end{proof}

In view of this result, the following definition makes sense:

\begin{definition}\label{def:metric}
For $\rho_0, \rho_1 \in \Dens$ we define 
\begin{align*}
 d(\rho_0, \rho_1) := \lim_{n \to \infty}  d(\rho_0^n, \rho_1^n)\;,
\end{align*}
where $\{\rho_0^n\}_n, \{\rho_1^n\}_n$ are arbitrary sequences in $\Dens_+$ satisfying \eqref{eq:convergence}.
\end{definition}

Clearly, for $\rho_0, \rho_1 \in \Dens_+$, this definition is consistent with the one given before.
Note also that $d(\rho_0, \rho_1)$ is finite, since $\Dens_+$ has finite diameter by \eqref{eq:diam}.

We have now proved, in view of Lemma \ref{diamL}:
\begin{theorem}\label{diam2}
\begin{align}\label{eq:diam2}
{\rm diam}(\Dens) \leq 2\sqrt{n \log 2}\ .
\end{align}
\end{theorem}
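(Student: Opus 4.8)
The plan is essentially to observe that Theorem~\ref{diam2} is an immediate consequence of results already assembled. By Lemma~\ref{diamL} we have ${\rm diam}(\Dens_+) \leq 2\sqrt{n\log 2}$, i.e. $d(\rho_0,\rho_1) \leq 2\sqrt{n\log 2}$ for all $\rho_0,\rho_1 \in \Dens_+$. By Proposition~\ref{prop:extension} and Definition~\ref{def:metric}, for arbitrary $\rho_0,\rho_1 \in \Dens$ we may pick sequences $\{\rho_0^n\}_n,\{\rho_1^n\}_n$ in $\Dens_+$ with $\tau[|\rho_0^n-\rho_0|^2]\to 0$ and $\tau[|\rho_1^n-\rho_1|^2]\to 0$, and then $d(\rho_0,\rho_1) = \lim_{n\to\infty} d(\rho_0^n,\rho_1^n)$.

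The only point requiring a word is that such approximating sequences actually exist in $\Dens_+$: given $\rho \in \Dens$, set $\rho^n := (1-\tfrac1n)\rho + \tfrac1n I$, which is strictly positive, has trace $1$, and satisfies $\tau[|\rho^n - \rho|^2] = \tfrac1{n^2}\tau[|I-\rho|^2] \to 0$. (This is already the construction used in the proof of Proposition~\ref{prop:extension}.) Thus Definition~\ref{def:metric} applies to every pair in $\Dens$.

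First I would fix $\rho_0,\rho_1 \in \Dens$ and the sequences above. Second, I would note that each $d(\rho_0^n,\rho_1^n) \leq 2\sqrt{n\log 2}$ by Lemma~\ref{diamL}. Third, passing to the limit and invoking Definition~\ref{def:metric} (which is legitimate because the limit exists and is independent of the chosen sequences, by Proposition~\ref{prop:extension}), I conclude $d(\rho_0,\rho_1) = \lim_n d(\rho_0^n,\rho_1^n) \leq 2\sqrt{n\log 2}$. Since $\rho_0,\rho_1$ were arbitrary in $\Dens$, this gives ${\rm diam}(\Dens) \leq 2\sqrt{n\log 2}$.

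There is essentially no obstacle here: all the work is in Lemma~\ref{diamL} (which rests on the Talagrand inequality of Theorem~\ref{tal} and the known sharp bound $\sup_{\rho\in\Dens_+} S(\rho) = \tfrac{n}{2}\log 2$) and in Proposition~\ref{prop:extension} (the Cauchy/continuity argument for the extension). The present statement merely records that the diameter bound, being a bound on a supremum of distances, survives passage to the limit in the extended metric. The single thing to be careful about is making sure the extension is genuinely well-defined on all of $\Dens$, which is why I would explicitly exhibit the approximating sequence $\rho^n = (1-\tfrac1n)\rho + \tfrac1n I$.
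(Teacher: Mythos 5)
Your proposal is correct and is essentially the paper's own argument: the bound on $\mathrm{diam}(\Dens_+)$ from Lemma~\ref{diamL} passes to the limit through the extension of $d$ to $\Dens$ given by Proposition~\ref{prop:extension} and Definition~\ref{def:metric}, with the explicit approximants $(1-\tfrac1k)\rho + \tfrac1k I$ showing the definition applies to every pair in $\Dens$. Only a cosmetic remark: use an index other than $n$ for the approximating sequence, since $n$ already denotes the number of Fermionic degrees of freedom appearing in the bound $2\sqrt{n\log 2}$.
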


\section{Characterization of geodesics and geodesic convexity of the entropy}

\subsection{Geodesic equations}

Our next aim is to characterize the geodesics in the Riemannian manifold $\Dens_+$: 
A (constant speed) geodesic is a curve $u : [0,1] \to \Dens$ satisfying 
\begin{align*}
 d(u(s), u(t)) = |t-s| d(u(0), u(1))
\end{align*}
for all $s, t \in [0,1]$. 
Such curves must satisfy a Euler-Lagrange equation that we shall now derive for our Riemannian metric. 
In order to make the argument more transparent, we make a brief detour to a more abstract setting. 
See \eqref{eq:} below for the interpretation of the terms in our Clifford algebra setting.

Let $(V, \ipo{\cdot,\cdot})$ be a finite-dimensional real Hilbert space. Let $W \subset V$ be a linear subspace, fix $z \in V \setminus W$, consider the affine subspace $W_z := z + W$, and let $M \subset W_z$ be a relatively open subset. Let $D : M \to \mathscr{L}(W)$ be a smooth function such that $D(x)$ is self-adjoint and invertible for all $x \in M$. We shall write $C(x) := D(x)^{-1}$. 
Consider the Lagrangian $L : W \times M \to \R$ defined by $L(p, x) = \ipo{C(x)p, p}$
and the associated minimization problem
\begin{align*}
  \inf_{u(\cdot) \in {\mathcal C}^1([0,1], M)} \bigg\{  \int_0^t L(u'(t), u(t)) \dd t \ : 
  				u(0) = u_0\;, \ u(1) = u_1 \bigg\}\;,
\end{align*}
where $u_0, u_1 \in M$ are given boundary values.

Then the Euler-Lagrange equation $\ddt L_p(u',u) - L_x(u',u) = 0$ takes the form 
\begin{align*}
  \ddt C(u(t)) u'(t) - \frac12 \ipo{\partial_x C(u(t)) u'(t), u'(t)}  = 0\;.
\end{align*}
Using the identity $\partial_x C(x) = - C(x)\partial_x D(x) C(x)$ and the substitution $v(t) := C(u(t)) u'(t)$ we infer that the Euler-Lagrange equations are equivalent to the system
\begin{equation}\begin{aligned}\label{eq:EL}
 \left\{ \begin{array}{ll}
  u'(t) - D(u(t))v(t) &= 0\;,\medskip\\
v'(t) + \frac12 \ipo{\partial_x D(u(t)) v(t), v(t)} & = 0\;.\end{array} \right.
\end{aligned}\end{equation}

We shall apply this result to the case where
\begin{equation}\begin{aligned}\label{eq:}
  V =& \,\{A \in \Cl \ :  A \textrm{ self-adjoint} \}\;, \quad
  \ipo{\cdot,\cdot} =  \ip{\cdot,\cdot}\;, \quad
  z = I\;,\\
  W = \Clo := & \,\{ A \in \Cl \ : A \textrm{ self-adjoint}, \ \tau(A) = 0\}\;, \quad
  M = \Dens_+\;,
\end{aligned}\end{equation}
and for any $\rho \in \Dens_+$ the operator $D(\rho): \Clo \to \Clo$ is given by
\begin{align*}
 D(\rho) : U \mapsto
 - \dive\left[ \left(\Gamma(\rho),\rho\right)\#\nabla U\right]\;.
\end{align*}
Note that $D(\rho)$ is invertible for any $\rho \in \Dens_+$, as follows from Theorem \ref{tangent} and the fact that the null space of $\nabla$ consists of multiples of the identity operator. Furthermore, using Lemma \ref{ipp} we infer that $\ip{U, D(\rho) V} \in \R$ for all $U, V \in \Clo$, and
\begin{align*}
\ip{U, D(\rho) V} = \ipo{\nabla U, \nabla V}_\rho 
                  = \ipo{\nabla V, \nabla U}_\rho 
				 = \ip{V, D(\rho) U}\;,
\end{align*}
hence $D(\rho)$ satisfies the assumptions above.
In order to apply \eqref{eq:EL} we use the more general chain rule provided in the Appendix in Propositions \ref{prop:chain-rule} and \ref{prop:identities}  to compute
\begin{align*}
 \ddt\bigg|_{t = 0} (\rho + t \sigma)^\alpha 
   = 
   \int_0^1 \int_0^\alpha 
  \frac{\rho^{\alpha - \beta}}{(1-s)I + s \rho} \sigma
    \frac{\rho^\beta}{(1-s)I + s \rho}  \dd \beta \dd s
\end{align*}
for any $0 < \alpha < 1$,  $\rho \in \Dens_+$, and $\sigma \in \Clo$.
Consequently, for $U \in \Clo$,
\begin{align*}
 & \ddt\bigg|_{t = 0} \ip{D(\rho + t \sigma) U, U}
\\&   =   \ddt\bigg|_{t = 0} \tau \bigg(\int_0^1 (\nabla U)^* \cdot
    \Gr(\rho + t \sigma)^{1-\alpha} \cdot \nabla U \cdot (\rho + t \sigma)^\alpha  \dd \alpha\bigg)
\\&    =    \tau \bigg(\int_0^1  \bigg[ (\nabla U)^* \cdot 
    \bigg(\ddt\bigg|_{t = 0}  \Gr(\rho + t \sigma)^{\alpha} \bigg)\cdot
     \nabla U \cdot \rho^{1-\alpha}
\\& \qquad\qquad    +   (\nabla U)^* \cdot \Gr(\rho)^{1-\alpha} \cdot \nabla U 
 \cdot \bigg(\ddt\bigg|_{t = 0} (\rho + t \sigma)^\alpha\bigg)
     \bigg] \dd \alpha\bigg)
\\&    = 
 \tau \bigg( \int_0^1  \int_0^1 \int_0^\alpha 
  \bigg[(\nabla U )^*\cdot
      \frac{\Gr(\rho)^{\alpha - \beta}}{(1-s)I + s \Gr(\rho)} \Gr(\sigma)
    \frac{\Gr(\rho)^\beta}{(1-s)I + s \Gr(\rho)}  
     \cdot
     \nabla U \cdot \rho^{1-\alpha} 
\\& \qquad\qquad    + (\nabla U )^*\cdot 
    \Gr(\rho)^{1-\alpha} \cdot \nabla U 
 \cdot   \frac{\rho^{\alpha - \beta}}{(1-s)I + s \rho} \sigma
    \frac{\rho^\beta}{(1-s)I + s \rho} 
     \cdot \nabla U 
     \bigg] \dd \beta \dd \alpha \dd s\bigg)\;.
\end{align*}
Using cylicity of the trace and the identities \eqref{eq:gr-alghom} -- \eqref{eq:grad-adj} we obtain
\begin{align*}
  & \ddt\bigg|_{t = 0} \ip{D(\rho + t \sigma) U, U}
   \\& = 
 \tau\bigg( \sigma \cdot \int_0^1  \int_0^1 \int_0^\alpha 
  \bigg[
    \frac{\rho^\beta}{(1-s)I + s \rho}  
     \cdot
     \Gr(\nabla U) \cdot \Gr(\rho)^{1-\alpha} \cdot \Gr_*(\nabla U)  \cdot
     \frac{\rho^{\alpha - \beta}}{(1-s)I + s \rho} 
\\& \qquad\qquad\qquad\qquad    +  
    \frac{\rho^\beta}{(1-s)I + s \rho} 
     \cdot (\nabla U)^* \cdot
      \Gr(\rho)^{1-\alpha} \cdot\nabla U
 \cdot   \frac{\rho^{\alpha - \beta}}{(1-s)I + s \rho}
     \bigg] \dd \beta \dd \alpha \dd s\bigg)
   \\& = 2
 \tau\bigg( \sigma \cdot
  \int_0^1  \int_0^1 \int_0^\alpha 
  \bigg[
	     \frac{\rho^{\alpha - \beta}}{(1-s)I + s \rho}
     \cdot(\nabla U)^* 
 	 \cdot \Gr(\rho)^{1-\alpha} 
	 \cdot \nabla U 
     \cdot \frac{\rho^\beta}{(1-s)I + s \rho} 
     \bigg] \dd \beta \dd \alpha \dd s\bigg)\;.
\end{align*}
Therefore the following definition is natural.

\begin{definition}\label{def:flat}
For $\rho \in \Dens_+$ and $\VV_1, \VV_2 \in \Cln$ we set
\begin{align*}
 \rho \flat (\VV_1, \VV_2)
 & = 2 \int_0^1  \int_0^1 \int_0^\alpha 
  \bigg[
 	     \frac{\rho^{\alpha - \beta}}{(1-s)I + s \rho}
     \cdot \VV_1^* 
 \cdot \Gr(\rho)^{1-\alpha} 
	 \cdot \VV_2 
     \cdot \frac{\rho^\beta}{(1-s)I + s \rho} 
     \bigg] \dd \beta \dd \alpha \dd s\;,
\end{align*}
\end{definition}

\begin{remark}\label{flatrem} If $\rho$, $\Gamma(\rho)$,  $\VV_1$ and $\VV_2$ all commute, it is easy to  explicitly compute the integrals and one finds
\begin{align*}
\rho \flat (\VV_1, \VV_2) =  \VV_1 \cdot  \VV_2
\end{align*} 
in this case.
\end{remark}

With this notation the identity above can be rewritten as  
\begin{align*}
 \ddt\bigg|_{t = 0} \ip{D(\rho + t \sigma) U, U}
	  =  \ip{\sigma,  \rho \flat (\nabla U, \nabla U)}\;,
\end{align*}
and in view of \eqref{eq:EL} we have proved the following result:

\begin{theorem}\label{thm:EL-dens}
The geodesic equations in the Riemannian manifold $\Dens_+$ are given by
 \begin{equation}\begin{aligned}\label{eq:EL-Dens}
 \left\{ \begin{array}{ll}
  \dot\rho(t) +  \dive\left[ \left(\Gamma(\rho(t)),\rho(t)\right)\#\nabla U(t)\right] &= 0\;,\medskip\\
\dot U(t) + \frac12  \rho(t) \flat (\nabla U(t), \nabla U(t)) & = 0\;.\end{array} \right.
\end{aligned}\end{equation}
\end{theorem}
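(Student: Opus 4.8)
The plan is to read off \eqref{eq:EL-Dens} from the abstract Euler--Lagrange system \eqref{eq:EL} under the specialization \eqref{eq:}, so that the bulk of the work is already contained in the displayed calculation preceding Definition~\ref{def:flat}. First I would record the standard reduction from the metric definition of a geodesic to a variational problem: since $\Dens_+$ is a finite-dimensional smooth Riemannian manifold, any curve with $d(u(s),u(t)) = |t-s|\,d(u(0),u(1))$ whose image lies in $\Dens_+$ is a length-minimizer parametrized proportionally to arclength, hence a stationary curve for the energy $E[\rho(\cdot)] = \int_0^1 g_{\rho(t)}(\dot\rho(t),\dot\rho(t))\dd t$ among $C^1$ curves with the given endpoints, and in particular is smooth on $(0,1)$. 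Using Theorems~\ref{selfad} and~\ref{tangent} I would write $\dot\rho(t) = D(\rho(t))U(t)$ with $U(t)\in\Clo$ the unique trace-zero self-adjoint potential whose gradient enters the continuity equation; then $\ip{U,D(\rho)V} = \ipo{\nabla U,\nabla V}_\rho$ gives $g_{\rho(t)}(\dot\rho(t),\dot\rho(t)) = \ip{U(t),D(\rho(t))U(t)} = \ip{C(\rho(t))\dot\rho(t),\dot\rho(t)} = L(\dot\rho(t),\rho(t))$, so that $E$ is precisely the action of the Lagrangian $L(p,x)=\ip{C(x)p,p}$ of the abstract setup, with $x=\rho$ in the role of $u$ and $U$ in the role of $v=C(u)u'$.

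Next I would check that the specialization \eqref{eq:} satisfies the hypotheses needed for \eqref{eq:EL}: the operator $D(\rho)$ maps $\Clo$ into $\Clo$ (self-adjointness of $\dive[(\Gamma(\rho),\rho)\#\nabla U]$ for self-adjoint $U$ is Theorem~\ref{selfad}, and $\tau(\dive\,\AA)=0$ since $\nabla_i I = 0$), it is invertible for every $\rho\in\Dens_+$ by Theorem~\ref{tangent} together with $\Ker\nabla = \C I$, and it is self-adjoint with real numerical values on the real Hilbert space $\Clo$ by Lemma~\ref{ipp}. Hence \eqref{eq:EL} applies verbatim, and under the substitution $v(t)=C(u(t))u'(t)$, which in our notation is just $\dot\rho(t)=D(\rho(t))U(t) = -\dive[(\Gamma(\rho(t)),\rho(t))\#\nabla U(t)]$, the first line of \eqref{eq:EL} is identically the first line of \eqref{eq:EL-Dens}.

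It then remains to identify the force term $\ipo{\partial_x D(u)v,v}$ appearing in the second line of \eqref{eq:EL}. By definition this is the $\ip{\cdot,\cdot}$-Riesz representative in $\Clo$ of the functional $\sigma\mapsto\ip{\bigl(\ddt\big|_{t=0}D(\rho+t\sigma)\bigr)U,\,U}$, and this functional is exactly what is evaluated in the long computation before Definition~\ref{def:flat}, where it is shown to equal $\ip{\sigma,\rho\flat(\nabla U,\nabla U)}$. One then checks that $\rho\flat(\nabla U,\nabla U)\in\Clo$: it is self-adjoint (either directly from the identities \eqref{eq:gr-alghom}--\eqref{eq:grad-adj} used in that computation, or because $t\mapsto\ip{D(\rho+t\sigma)U,U}$ is real), and has vanishing trace because $D(\rho+t\sigma)$ preserves $\Clo$ and hence so does its $t$-derivative. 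Therefore the Riesz representative is $\rho\flat(\nabla U,\nabla U)$ itself, and the second line of \eqref{eq:EL} becomes $\dot U(t)+\frac12\,\rho(t)\flat(\nabla U(t),\nabla U(t))=0$, completing the derivation of \eqref{eq:EL-Dens}.

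I expect the only real obstacle to lie in the bookkeeping of this last step: keeping straight which objects are Riesz representatives with respect to $\ip{\cdot,\cdot}$ on $\Clo$, which are elements of $\mathscr L(\Clo)$, and which $\flat$-type integral matches the $t$-derivative in \eqref{eq:EL} up to the right constant, together with confirming that $\rho\flat(\nabla U,\nabla U)$ really lands in $\Clo$. There is no genuine analytic difficulty, since the manifold is finite-dimensional and the differentiation of operator powers required for the $\flat$-term has already been handled in the Appendix (Propositions~\ref{prop:chain-rule} and~\ref{prop:identities}). A secondary point worth a remark is that \eqref{eq:EL-Dens} is a necessary condition for geodesics that stay inside $\Dens_+$, so that the force term is well defined; the behaviour of minimizing curves near $\partial\Dens$ is a separate matter, addressed elsewhere in the paper.
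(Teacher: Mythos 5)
Your proposal takes the paper's own route: the paper proves Theorem \ref{thm:EL-dens} exactly as you describe, by specializing the abstract Euler--Lagrange system \eqref{eq:EL} to the data \eqref{eq:}, checking that $D(\rho)$ is an invertible self-adjoint operator on $\Clo$ (via Theorem \ref{tangent}, Lemma \ref{ipp} and $\Ker\nabla=\C I$), and invoking the displayed computation preceding Definition \ref{def:flat}, which identifies $\ddt\big|_{t=0}\ip{D(\rho+t\sigma)U,U}$ with $\ip{\sigma,\,\rho\flat(\nabla U,\nabla U)}$.

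One assertion in your final bookkeeping step is false, though it does not sink the argument: $\rho\flat(\nabla U,\nabla U)$ is \emph{not} trace-free in general, and the reason you give (that $D(\rho+t\sigma)$ preserves $\Clo$, hence so does its $t$-derivative) is a non sequitur, since $\rho\flat(\nabla U,\nabla U)$ is the representative of a quadratic form on $\Clo$, not a value of the differentiated operator. Indeed, by Remark \ref{flatrem}, in the commutative case $\rho\flat(\nabla U,\nabla U)=\nabla U\cdot\nabla U$; for $n=1$, $\rho=I+yQ$ and $U=uQ$ this equals $u^2I$, whose trace is $u^2>0$. The correct statement is that the pairing against $\sigma\in\Clo$ determines the force term only modulo the orthogonal complement $\R I$ of $\Clo$, so the Riesz representative in $\Clo$ is the traceless projection of $\rho\flat(\nabla U,\nabla U)$; since replacing $U(t)$ by $U(t)+c(t)I$ changes neither $\nabla U(t)$ nor the continuity equation, the projected and unprojected forms of the second equation are gauge-equivalent, and \eqref{eq:EL-Dens} is to be read with $U(t)$ self-adjoint but its trace left free (the discrepancy is harmless wherever the equation is used, e.g.\ in the proof of Proposition \ref{prop:Hess} the trace part of $\partial_t U$ pairs to zero against $\Num\rho$). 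With that one repair, your derivation coincides with the paper's.
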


\begin{remark}\label{rem:Rn-compare}
These equations should be compared with the geodesic equations in the Wasserstein space over $\R^n$, which are given by 
\begin{equation}\begin{aligned}\label{eq:geod-Rn}
 \left\{ \begin{array}{ll}
 \partial_t \rho + \nabla \cdot (\rho \nabla U)& = 0\;,\\
 \partial_t \psi + \frac12|\nabla U|^2&  = 0 \;.\end{array} \right.
\end{aligned}\end{equation}
The Fermionic analogue is similar, but note that the second `Hamilton-Jacobi-like' equation in  \eqref{eq:EL-Dens} depends on $\rho$.
However, as explained in Remark~\ref{flatrem}, this dependence is trivial in the presence of sufficient commutativity, in which case 
(\ref{eq:EL-Dens}) reduces to an exact analog of  (\ref{eq:geod-Rn})
\end{remark}

\subsection{The Hessian of the entropy}

Now we are ready to compute the Hessian of the entropy.

\begin{proposition}\label{prop:Hess}
For $\rho \in \Dens_+$ and $U \in \Cl_0$ we have
\begin{equation}\begin{aligned}
\label{eq:Hess}
 \Hess_{\rho} S(\nabla U, \nabla U)
  & = \ip{(\Gr(\rho), \rho) \# \nabla U, \nabla \Num U) }
 - \frac12 \ip{\Num \rho, \rho \flat (\nabla U, \nabla U)}\;.
\end{aligned}\end{equation}
\end{proposition}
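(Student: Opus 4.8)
\textbf{Proof strategy for Proposition~\ref{prop:Hess}.} The plan is to compute the Hessian of $S$ along a geodesic by differentiating the first-order energy identity twice and then substituting the geodesic equations from Theorem~\ref{thm:EL-dens}. Recall that for a smooth curve $\rho(t)$ in $\Dens_+$ with tangent field $\nabla U(t)$ satisfying the continuity equation $\dot\rho + \dive[(\Gr(\rho),\rho)\#\nabla U] = 0$, we have from \eqref{diss}-type manipulations that $\ddt S(\rho(t)) = \tau[(\nabla\log\rho)^*\cdot\nabla\rho] $, but more usefully, integrating by parts, $\ddt S(\rho(t)) = -\ip{\log\rho(t) , \dive[(\Gr(\rho),\rho)\#\nabla U]} = \ip{\nabla\log\rho(t), (\Gr(\rho),\rho)\#\nabla U}_{\text{(after unfolding)}}$. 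The cleanest route is: along any curve, $\ddt S(\rho(t)) = \tau[(I + \log\rho(t))\dot\rho(t)] = \tau[(\log\rho(t))\dot\rho(t)]$ since $\tau(\dot\rho)=0$. Then the Hessian is $\Hess_\rho S(\nabla U,\nabla U) = \frac{d^2}{dt^2}\big|_{t=0} S(\rho(t))$ evaluated along the \emph{geodesic} with $\rho(0)=\rho$, $\dot\rho(0)$ determined by $\nabla U$, because the geodesic has zero acceleration and so the second derivative of any function along it computes the Hessian.

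First I would write $\frac{d^2}{dt^2}S(\rho(t)) = \ddt\,\tau[\log\rho(t)\,\dot\rho(t)] = \tau\big[\tfrac{d}{dt}(\log\rho(t))\cdot\dot\rho(t)\big] + \tau[\log\rho(t)\,\ddot\rho(t)]$. For the first term I use the chain rule for $\log$ (the $\alpha\to 0$ analog of the Appendix chain rule, or directly $\ddt\log\rho = \int_0^\infty (\rho+xI)^{-1}\dot\rho(\rho+xI)^{-1}\dd x = (\Gr(\rho),\rho)\widehat\#\,$-type object — more precisely $\ddt\log\rho = \int_0^1(\rho^s,\rho^{1-s})$-free since $\Gr$ doesn't enter for $\log$ of a single operator: $\ddt\log\rho(t) = \int_0^\infty (xI+\rho)^{-1}\dot\rho(xI+\rho)^{-1}\dd x$), and note that $\dot\rho = -\dive[(\Gr(\rho),\rho)\#\nabla U]$ is self-adjoint. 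For the second term I substitute $\ddot\rho(t)$ from differentiating the first geodesic equation in \eqref{eq:EL-Dens}: $\ddot\rho = -\dive\big[\tfrac{d}{dt}\big((\Gr(\rho),\rho)\#\nabla U\big)\big] = -\dive\big[(\Gr(\rho),\rho)\#\nabla\dot U\big] - \dive\big[\big(\tfrac{d}{dt}(\Gr(\rho),\rho)\#\big)\nabla U\big]$, and then use the second geodesic equation $\dot U = -\tfrac12\rho\flat(\nabla U,\nabla U)$ together with the identity derived just before Definition~\ref{def:flat}, namely $\ddt\ip{D(\rho+t\sigma)U,U} = \ip{\sigma, \rho\flat(\nabla U,\nabla U)}$ applied with $\sigma = \dot\rho$.

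The key bookkeeping step is to match terms. The term $\tau[\log\rho\cdot(-\dive[(\Gr(\rho),\rho)\#\nabla\dot U])] = \ip{\nabla\log\rho, (\Gr(\rho),\rho)\#\nabla\dot U}$ combines with $\dot U = -\tfrac12\rho\flat(\nabla U,\nabla U)$, but after integrating by parts and using $\nabla\log\rho$ relations this should reorganize; meanwhile the ``$\tfrac{d}{dt}(\Gr(\rho),\rho)\#$'' piece is exactly what produces the $\rho\flat$ term via the boxed identity preceding Definition~\ref{def:flat}. The target formula's first term $\ip{(\Gr(\rho),\rho)\#\nabla U,\nabla\Num U}$ must emerge: here I recall $\Num = -\dive\nabla$, so $\dive[(\Gr(\rho),\rho)\#\nabla U]$ at $\rho=I$ equals $-\Num U$; more generally one uses Lemma~\ref{bas} to relate $(\Gr(\rho),\rho)\#\nabla\log\rho = \nabla\rho$, hence $\dive[(\Gr(\rho),\rho)\#\nabla\log\rho] = \dive\nabla\rho = -\Num\rho$, which connects $\dot\rho$ under the Fokker--Planck flow with the entropy gradient; the appearance of $\Num U$ rather than $\Num\log\rho$ reflects the general tangent vector rather than the gradient-flow tangent. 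I would track constants carefully and use the symmetry properties from Lemma~\ref{pos} and Lemma~\ref{ipp} to fold conjugates.

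\textbf{Main obstacle.} The hard part will be the differentiation $\ddt\big[(\Gr(\rho(t)),\rho(t))\#\nabla U\big]$ and showing that, after integration by parts against $\log\rho$, it contributes precisely $-\tfrac12\ip{\Num\rho,\rho\flat(\nabla U,\nabla U)}$ — i.e.\ verifying that the operator-algebraic $\flat$ bracket from Definition~\ref{def:flat} is exactly the object that appears, with the right sign and factor. This requires invoking the Appendix chain rule (Propositions~\ref{prop:chain-rule} and \ref{prop:identities}) for fractional powers $\rho^\alpha$ and $\Gr(\rho)^{1-\alpha}$ inside the $\#$ integral, then performing the same cyclicity-of-trace and grading manipulations \eqref{eq:gr-alghom}--\eqref{eq:grad-adj} that were used to derive the pre-Definition~\ref{def:flat} identity, now with $\sigma$ replaced by $\dot\rho = -\dive[(\Gr(\rho),\rho)\#\nabla U]$ and recognizing $-\dive\nabla\rho = \Num\rho$ only in the gradient-flow case — so care is needed because the Hessian is stated for \emph{general} $U\in\Cl_0$, and one must be sure the computation never secretly assumes $\nabla U = \nabla\log\rho$. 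Keeping the general tangent vector throughout, while still landing on the stated closed form, is the delicate point.
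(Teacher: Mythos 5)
Your skeleton---compute $\ddtt S(\rho(t))$ along a geodesic and feed in the two equations of \eqref{eq:EL-Dens}---is the same as the paper's, and the part of your computation driven by $\dot U = -\tfrac12\,\rho\flat(\nabla U,\nabla U)$ does land correctly on $-\tfrac12\ip{\Num\rho,\rho\flat(\nabla U,\nabla U)}$. But the first term of \eqref{eq:Hess} is exactly where you stop. In your decomposition $\ddtt S = \tau[\tfrac{d}{dt}(\log\rho)\,\dot\rho] + \tau[\log\rho\,\ddot\rho]$, producing $\ip{(\Gr(\rho),\rho)\#\nabla U,\nabla\Num U}$ requires verifying that
\begin{equation*}
\tau\Big[\Big(\tfrac{d}{dt}\log\rho\Big)\dot\rho\Big]
\;+\;
\bbip{\nabla\log\rho\,,\,\Big(\tfrac{d}{dt}\hrho\Big)\contr\nabla U}
\;=\;
\bbip{(\Gr(\rho),\rho)\#\nabla U\,,\,\nabla\Num U}\,,
\end{equation*}
i.e.\ the derivative of $\log\rho$ and the derivative of the $\#$-kernel, paired against $\dot\rho$ and $\nabla\log\rho$ respectively, must recombine into that closed form with the right constant. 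You explicitly flag this as the ``main obstacle'' and do not carry it out, so the proposal defers precisely the computation that constitutes the proof; nothing you wrote shows how that recombination would go.

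The gap is avoidable, and the move you miss is to simplify the \emph{first} derivative before differentiating again: integrating by parts and using Lemma~\ref{bas} in the form $(\Gr(\rho),\rho)\#\nabla\log\rho = \nabla\rho$ gives $\ddt S(\rho) = \ip{\nabla\rho,\nabla U}$, an expression containing no logarithm and no $\#$. A second differentiation is then just the product rule, $\ddtt S = \ip{\nabla\dot\rho,\nabla U} + \ip{\nabla\rho,\nabla\dot U} = \ip{\dot\rho,\Num U} + \ip{\Num\rho,\dot U}$; substituting the continuity equation in the first summand and integrating by parts once more yields $\ip{(\Gr(\rho),\rho)\#\nabla U,\nabla\Num U}$, while the second geodesic equation turns the second summand into $-\tfrac12\ip{\Num\rho,\rho\flat(\nabla U,\nabla U)}$. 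In particular neither $\tfrac{d}{dt}\log\rho$ nor $\tfrac{d}{dt}$ of the fractional powers inside $\#$ is ever needed (the latter only enters later, via Lemma~\ref{lem:dt-rho-hat}, in the proof of Proposition~\ref{prop:convex}), and your worry about ``secretly assuming $\nabla U = \nabla\log\rho$'' disappears: the $\Num U$ term arises from $\ip{\nabla\dot\rho,\nabla U}$ for a general tangent direction, with $\log\rho$ entering only through the identity $\nabla\rho = (\Gr(\rho),\rho)\#\nabla\log\rho$.
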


\begin{proof}
Let $\rho(t) \in \Dens_+$ and $U(t) \in \Clo$ satisfy the geodesic equations \eqref{eq:EL-Dens}. We shall suppress the variable $t$ in order to improve readability. Using Lemma \ref{bas} we obtain
\begin{align*}
 \ddt S(\rho)
    &  = -\bip{ I + \log \rho , \dive ((\Gr(\rho), \rho)  \#\nabla U) }
  \\&  =  \bip{ \nabla \log \rho ,  (\Gr(\rho), \rho) \# \nabla U }
  \\&  =  \bip{ (\Gr(\rho), \rho)  \#\nabla \log \rho , \nabla U }
  \\&  =  \bip{ \nabla  \rho ,  \nabla U}\;.
\end{align*}
Therefore, using the geodesic equations \eqref{eq:EL-Dens},
\begin{align*}
  \ddtt S(\rho)
      & =  \bip{ \nabla \partial_t \rho ,  \nabla U }
         + \bip{ \nabla  \rho ,  \nabla \partial_t U } 
    \\& =  \bip{  \partial_t \rho ,  \Num U }
         + \bip{ \Num  \rho ,  \partial_t U }
    \\& = -\bip{  \dive ((\Gr(\rho), \rho) \# \nabla U) ,  \Num U }
         + \bip{ \Num  \rho ,  \partial_t U }
    \\& = \bip{   (\Gr(\rho), \rho) \# \nabla U , \nabla \Num U }
         - \frac12 \bip{ \Num  \rho ,  \rho \flat (\nabla U, \nabla U)}\;.
\end{align*}
\end{proof}

\begin{remark}\label{rem:Hess}
The expression (\ref{eq:Hess})  is analogous to the one for the Hessian of the Boltzmann-Shannon entropy ${H}(\rho) = \int_{\R^n} \rho(x) \log \rho(x) \dd x$  in the Wasserstein space over $\R^n$. In that case, 
\begin{equation}\label{rnenthess}
 \Hess_{\rho} {H}(\nabla U, \nabla U)
   = \int_{\R^n}  \Big( 
    \rho \nabla U\cdot  \nabla (-\Delta)U
     - \frac12  (-\Delta \rho )  |\nabla U|^2 \Big) \dd x\;.
\end{equation}
Note that $-\Delta$, like $\Num$, is a positive operator, which is why we have written (\ref{rnenthess}) in terms of $-\Delta$.   In this classical setting, one may simplify (\ref{rnenthess})
using
the identity
$$\frac12 \Delta |\nabla U|^2 = \nabla U\cdot\nabla \Delta  U + \|{\rm Hess}(U)\|^2$$
where  $\|{\rm Hess}(U)\|^2$ denotes the sum of the squares of the entries of the Hessian of $U$. Thus, (\ref{rnenthess}) reduces to
$$ \Hess_{\rho} {H}(\nabla U, \nabla U)  =   \int_ {\R^n}  \|{\rm Hess}(U)\|^2 \rho {\rm d}x\ ,$$
which manifestly displays the positivity of   $\Hess_{\rho} {H}$, and hence the geodesic convexity of the entropy $H$. We lack a simple analog of
$$\Num (\rho \flat (\nabla U, \nabla U))\ ,$$
and thus we lack a simple means to show that the Hessian of $S$ is positive in $\Cl$. In the final section of the paper, we shall show that in fact it is strongly positive  in that 
one even has, for $n=1,2$,
$$
 \Hess_{\rho} S(\nabla U, \nabla U) \geq  \|\nabla U\|^2_\rho\ .
$$
We conjecture that this is true for all $n$. This conjecture is supported by  the close connection between Logarithmic Sobolev Inequalities and entropy, and because the 
Logarithmic Sobolev Inequalities would be a classical consequence if this convexity is true.
\end{remark}

\begin{remark} In addition to the conjecture made in the previous remark, there are many open problems. In the classical case, gradient flows of all sorts of information theoretic
functional of densities lead to physically interesting evolution equations. Whether this is the case in the quantum setting remains to be seen. 

Another open problem concerns the curvature of $\Dens$ in our metric. As Otto has shown,  the $2$-Wasserstein metric on the ``manifold'' of probability measures has non-negative
sectional curvature, which has significant consequences for the general study of gradient flows in the $2$-Wasserstein metric. At present we lack any information on the sectional curvature
in $\Dens$. 
\end{remark}

Our next aim is to prove  Proposition \ref{prop:convex}, which asserts that non-negativity of the Hessian implies that the entropy is convex along geodesics in the metric space $(\Dens,d)$. Since the Riemannian metric degenerates at the boundary of $\Dens$, this implication is not obvious. In order to prove this result, we adapt the Eulerian approach from \cite{OW05,DS08} to our setting.

To carry out the calculations efficiently, we compress our notation at this point. For $\rho \in \Dens_+$, define
\begin{align*}
 \hrho := \int_0^1 \Ga (\rho)^{1-\alpha} \ot \rho^\alpha \dd \alpha 
   \in \Cl \ot \Cl\;.
\end{align*}
With this notation we can write
\begin{align}\label{eq:short-notation}
  (\Ga(\rho), \rho) \# A = \hrho * A\;,
\end{align}
where $*$ denotes the contraction operation 
$$(A\otimes B)*C := ACB\ .$$
Given a curve $t \mapsto \rho(t) \in \Dens_+$ it will be useful to calculate $\ddt \hrho(t)$.

\begin{lemma}\label{lem:dt-rho-hat}
Let $t \mapsto \rho(t) \in \Dens_+$ be a smooth curve. Then 
\begin{align*}
 \ddt\hrho(t) 
  &= \int_0^1 \int_0^1 \int_0^\alpha
     \bigg[\Ga(\rho(t))^{1-\alpha}
      \ot
   \bigg( \frac{\rho(t)^{\alpha - \beta}}{(1-s)I + s \rho(t)} \cdot \dot\rho(t)\cdot \frac{\rho(t)^{\beta}}{(1-s)I + s \rho(t)}
   \bigg)
  \\& \qquad\qquad\qquad+ 
   \bigg(  \frac{\Ga(\rho(t))^{\alpha - \beta}}{(1-s)I + s \Ga(\rho(t))} \cdot \Ga(\dot \rho(t))\cdot \frac{\Ga(\rho(t))^{\beta}}{(1-s)I + s \Ga(\rho(t))}
   \bigg)      \ot    \rho(t)^{1- \alpha}
    \bigg]\dd\beta  \dd \alpha \dd s\;.
\end{align*}
\end{lemma}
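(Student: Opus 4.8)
The plan is to differentiate the defining integral for $\hrho(t)$ term by term, using the chain rule for fractional powers $\ddt (\rho(t))^\alpha$ that was recorded just above Definition~\ref{def:flat}. Concretely, write
$$\hrho(t) = \int_0^1 \Ga(\rho(t))^{1-\alpha}\ot \rho(t)^\alpha\dd\alpha\;,$$
and apply $\ddt$ under the integral sign. Since the tensor product is bilinear, the product rule gives two contributions: one from differentiating the first factor $\Ga(\rho(t))^{1-\alpha}$ and one from differentiating the second factor $\rho(t)^\alpha$.

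For the second factor I would quote verbatim the identity already proved in the excerpt,
$$\ddt (\rho(t))^\alpha = \int_0^1\int_0^\alpha \frac{\rho(t)^{\alpha-\beta}}{(1-s)I+s\rho(t)}\cdot \dot\rho(t)\cdot \frac{\rho(t)^\beta}{(1-s)I+s\rho(t)}\dd\beta\dd s\;,$$
valid for $0<\alpha<1$; note the right-hand side also makes sense and is correct at the endpoints $\alpha = 0$ (it vanishes) and $\alpha = 1$, so no boundary issue arises in the outer $\dd\alpha$ integral. For the first factor, I would apply the same identity with exponent $1-\alpha\in(0,1)$ and with $\rho$ replaced by $\Ga(\rho)$, using that $\Ga$ is a linear $*$-automorphism so it commutes with the functional calculus and $\Ga(\rho(t))$ is again a smooth curve in $\Dens_+$ with derivative $\Ga(\dot\rho(t))$; this yields
$$\ddt \Ga(\rho(t))^{1-\alpha} = \int_0^1\int_0^{1-\alpha} \frac{\Ga(\rho(t))^{1-\alpha-\gamma}}{(1-s)I+s\Ga(\rho(t))}\cdot \Ga(\dot\rho(t))\cdot \frac{\Ga(\rho(t))^\gamma}{(1-s)I+s\Ga(\rho(t))}\dd\gamma\dd s\;.$$
Substituting both into $\ddt\hrho(t) = \int_0^1\big[(\partial_t\Ga(\rho)^{1-\alpha})\ot\rho^\alpha + \Ga(\rho)^{1-\alpha}\ot(\partial_t\rho^\alpha)\big]\dd\alpha$ and then relabelling the inner integration variable in the $\Ga$-term ($\gamma \mapsto$ something compatible with the $\beta$-convention; replacing $1-\alpha$ by $\alpha$ in the stated formula amounts to a harmless change of the range of $\alpha$ after using the symmetry of the double integral, or one simply matches the claimed expression by inspection) produces exactly the asserted formula. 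The only genuine care needed is bookkeeping of the exponents and the ranges of $\beta$, together with the observation that all integrands are bounded and continuous in $(\alpha,\beta,s)$ on the compact domain (the denominators $(1-s)I+s\rho(t)$ are invertible since $\rho(t)>0$), so differentiation under the integral sign is justified by dominated convergence on a neighbourhood of any fixed $t$.

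The main obstacle is purely notational: making sure the inner variables and exponent ranges in the $\Ga(\rho)$-term are written so as to match the stated right-hand side verbatim, rather than an equivalent-looking but differently-parametrized expression. There is no analytic difficulty, since the hard work — the chain rule for $t\mapsto(\rho+t\sigma)^\alpha$ — was already done in the passage preceding Definition~\ref{def:flat} (and is established in the Appendix via Propositions~\ref{prop:chain-rule} and~\ref{prop:identities}), and we are merely specializing it to the two exponents $\alpha$ and $1-\alpha$ and assembling the two pieces.
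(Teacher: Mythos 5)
Your proposal is correct and follows essentially the same route as the paper: differentiate $\hrho(t)$ under the integral via the product rule for the tensor product, and insert the chain rule for fractional powers from Propositions \ref{prop:chain-rule} and \ref{prop:identities} (applied to $\rho(t)^\alpha$ and, via the $*$-automorphism $\Ga$, to $\Ga(\rho(t))^{1-\alpha}$). Your change of variables $\alpha\mapsto 1-\alpha$ in the second term, together with the remark on the measure-zero endpoints and on differentiation under the integral sign, is exactly the bookkeeping the paper leaves implicit.
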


\begin{proof}
By the product rule, we have
\begin{align*}
  \ddt \hrho(t) := 
 \int_0^1 \Ga (\rho(t))^{1-\alpha} \ot \ddt\rho(t)^\alpha
  +
    \ddt \big(\Ga(\rho(t))^{1-\alpha}\big) \ot \rho(t)^\alpha 
  \dd \alpha 
  \;.
\end{align*}
Therefore the result follows from the fact that
\begin{align*}
  \ddt \rho(t)^\alpha 
   = \int_0^1 \int_0^\alpha 
  \frac{\rho(t)^{\alpha - \beta}}{(1-s)I + s \rho(t)} \cdot
  		 \dot\rho(t) \cdot  \frac{\rho(t)^{\beta}}{(1-s)I + s \rho(t)} 
		 \dd \beta \dd s\;,
\end{align*}
which is a consequence of Propositions \ref{prop:chain-rule} and \ref{prop:identities}.
\end{proof}

This leads to the following definition.

\begin{definition}\label{def:N-hat}
For $\rho \in \Dens_+$ we define $\hN(\rho) \in \Cl \ot \Cl$ by
\begin{align*}
 \hN(\rho)& = 
 \int_0^1 \int_0^1 \int_0^\alpha
     \bigg[\Ga(\rho)^{1-\alpha}
      \ot
   \bigg( \frac{\rho^{\alpha - \beta}}{(1-s)I + s \rho} \cdot \Num \rho\cdot \frac{\rho^{\beta}}{(1-s)I + s \rho}
   \bigg)
  \\& \quad+ 
   \bigg(  \frac{\Ga(\rho)^{\alpha - \beta}}{(1-s)I + s \Ga(\rho)} \cdot \Ga(\Num\rho)\cdot \frac{\Ga(\rho)^{\beta}}{(1-s)I + s \Ga(\rho)}
   \bigg)      \ot    \rho^{1- \alpha}
    \bigg]\dd\beta  \dd \alpha \dd s\;.
\end{align*}
\end{definition}

Then we have the following result.

\begin{lemma}\label{lem:N-hat}
If $\rho(t) = {\mathcal P}_t \rho$, then 
\begin{align*}
  \ddt  \hrho(t) = - \hN(\rho(t))\;.
\end{align*}
\end{lemma}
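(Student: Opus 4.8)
The plan is to exploit the structural parallelism between Lemma~\ref{lem:dt-rho-hat} and Definition~\ref{def:N-hat}. The former gives an explicit formula for $\ddt\hrho(t)$ valid along an \emph{arbitrary} smooth curve in $\Dens_+$, expressing the derivative entirely in terms of $\rho(t)$, $\Ga(\rho(t))$, $\dot\rho(t)$ and $\Ga(\dot\rho(t))$. The latter is precisely the same expression with $\dot\rho(t)$ replaced by $-\Num\rho(t)$ (and correspondingly $\Ga(\dot\rho(t))$ by $-\Ga(\Num\rho(t))$), up to the overall sign. So the entire content of the lemma is the elementary observation that when $\rho(t) = \mathcal{P}_t\rho$, one has $\dot\rho(t) = -\Num\rho(t)$ by the very definition \eqref{FFPE} of the Fermionic Fokker-Planck equation.

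Concretely, I would first note that $t\mapsto\rho(t) = \mathcal{P}_t\rho$ is a smooth curve in $\Dens_+$ (smoothness is clear since $\Cl$ is finite-dimensional and $\mathcal{P}_t = e^{-t\Num}$; positivity is the Mehler-formula fact recalled after \eqref{FFPE}, and by Lemma~\ref{entdiss} / strict positivity of the generator one stays in $\Dens_+$ for $\rho \in \Dens_+$). Then I would apply Lemma~\ref{lem:dt-rho-hat} to this curve, obtaining the stated triple-integral formula for $\ddt\hrho(t)$ in terms of $\dot\rho(t)$ and $\Ga(\dot\rho(t))$. Next, substitute $\dot\rho(t) = -\Num\rho(t)$, and use that $\Ga$ is linear so that $\Ga(\dot\rho(t)) = -\Ga(\Num\rho(t))$. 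Pulling the two minus signs out of the (linear in $\dot\rho$, resp.\ $\Ga(\dot\rho)$) integrand, the right-hand side becomes exactly $-\hN(\rho(t))$ as defined in Definition~\ref{def:N-hat}. This completes the proof.

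There is essentially no obstacle here: the only point requiring a word of care is the appeal to Lemma~\ref{lem:dt-rho-hat}, whose proof rests on the chain rule for matrix powers from Propositions~\ref{prop:chain-rule} and~\ref{prop:identities} in the Appendix, so one should make sure $\rho(t)$ genuinely lies in $\Dens_+$ (not merely $\Dens$) for all $t\geq 0$ whenever $\rho \in \Dens_+$, so that the fractional powers $\rho(t)^\alpha$ and the resolvents $((1-s)I + s\rho(t))^{-1}$ are well-defined and depend smoothly on $t$. Given that, the statement is a one-line consequence: it is the specialization of the general derivative formula to the Fokker-Planck flow, and $\hN(\rho)$ was \emph{defined} precisely to be the negative of that specialization. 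If anything, the ``main obstacle'' is purely bookkeeping — matching every factor in the two triple integrals term by term — but this is immediate from inspection since the two formulas are syntactically identical modulo the substitution $\dot\rho(t) \rightsquigarrow -\Num\rho(t)$.
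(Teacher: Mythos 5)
Your proposal is correct and coincides with the paper's own argument, which simply observes that the lemma follows immediately from Lemma \ref{lem:dt-rho-hat} and Definition \ref{def:N-hat} after substituting $\dot\rho(t) = -\Num\rho(t)$. Your extra remarks on smoothness and positivity of $t\mapsto\mathcal{P}_t\rho$ are sound bookkeeping but do not change the route.
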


\begin{proof}
This is an immediate consequence of Lemma \ref{lem:dt-rho-hat} and Definition \ref{def:N-hat}.
\end{proof}

Now we are ready to state the announced result. Since parts of the argument are very similar to \cite{DS08}, we shall only give a sketch of the proof.

\begin{proposition}\label{prop:convex}
Let $\kappa \in \R$. If $\Hess_{\rho} S(\nabla U, \nabla U) \geq \kappa \ipo{ \nabla U, \nabla U}_{\rho}$ for all $\rho \in \Dens_+$, then for all constant speed geodesics $u :[0,1] \to \Dens$ we have
\begin{align*}
 S(u(t)) \leq (1-t)S(u(0)) + t S(u(1)) 
   				 - \frac{\kappa}{2} t(1-t) d(u(0), u(1))^2\;.
\end{align*}
\end{proposition}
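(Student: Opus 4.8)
The plan is to adapt the Eulerian (Otto--Westdickenberg) approach: establish a differential inequality for the function $\phi(t) := S(u(t))$ along an (approximate) geodesic $u(\cdot)$, namely $\phi''(t) \geq \kappa \, d(u(0),u(1))^2$ in a suitable weak sense, from which the stated displacement-convexity estimate follows by integrating twice. The conclusion $\phi(t) \leq (1-t)\phi(0) + t\phi(1) - \frac{\kappa}{2}t(1-t)\,d(u(0),u(1))^2$ is exactly what one obtains from $\phi'' \geq \kappa \, d(u(0),u(1))^2$ together with the fact that for a constant-speed geodesic the metric derivative is constant and equals $d(u(0),u(1))$.

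First I would reduce to the case of curves lying in the open part $\Dens_+$. Since the Riemannian metric degenerates at $\partial\Dens$, a literal geodesic need not stay in $\Dens_+$, so the first step is a regularization: given a constant-speed geodesic $u$ in $(\Dens,d)$, approximate it by curves $u^\eps$ obtained by running the heat flow ${\mathcal P}_s$ for a small time and/or by convex combination with $I$, as in the proof of Proposition~\ref{prop:extension}, so that $u^\eps(t) \in \Dens_+$ with a uniform lower bound $u^\eps(t) \geq c(\eps) I$, while $u^\eps \to u$ uniformly and the length (hence the endpoint distance, up to $o(1)$) is controlled. One then works with near-minimizing paths rather than exact geodesics, accepting errors that vanish as $\eps \to 0$. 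The key point that makes this robust is that the Hessian lower bound $\Hess_\rho S(\nabla U,\nabla U) \geq \kappa\|\nabla U\|_\rho^2$ is assumed to hold at \emph{every} $\rho \in \Dens_+$ with a \emph{constant} $\kappa$, so no uniformity in the degeneration is lost.

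The core computation is the following: along a solution $(\rho(t),U(t))$ of the geodesic equations \eqref{eq:EL-Dens} in $\Dens_+$, one has $\ddt S(\rho(t)) = \bip{\nabla\rho(t),\nabla U(t)}$ (this is already carried out in the proof of Proposition~\ref{prop:Hess}), and the second derivative
\begin{align*}
 \ddtt S(\rho(t)) = \Hess_{\rho(t)} S(\nabla U(t),\nabla U(t)) \geq \kappa \, \|\nabla U(t)\|_{\rho(t)}^2 = \kappa \, g_{\rho(t)}(\dot\rho(t),\dot\rho(t))\;,
\end{align*}
where the last equality is the definition of the metric together with the first geodesic equation. For a constant-speed geodesic $g_{\rho(t)}(\dot\rho(t),\dot\rho(t)) = d(u(0),u(1))^2$ is constant in $t$, so $\ddtt S(\rho(t)) \geq \kappa\, d(u(0),u(1))^2$; integrating this differential inequality on $[0,1]$ with the boundary values $S(u(0)), S(u(1))$ gives the claim. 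The genuine work is to make this rigorous without assuming $C^2$ regularity of $t \mapsto S(u(t))$: one shows the inequality holds distributionally by testing against non-negative $C^\infty_c(0,1)$ functions, using the approximations $u^\eps$, the fact that on $\Dens_+$ everything is smooth, and lower semicontinuity of $S$ and of the length functional; this is where the cited arguments of \cite{OW05,DS08} are invoked and only sketched.

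The main obstacle I anticipate is precisely the boundary degeneracy: controlling the approximation $u^\eps \to u$ well enough that (i) $S(u^\eps(t)) \to S(u(t))$ for each $t$, (ii) the near-geodesic property is preserved so that the metric speed of $u^\eps$ converges to $d(u(0),u(1))$, and (iii) the differential inequality $\ddtt S(\rho^\eps) \geq \kappa\,\|\dot\rho^\eps\|^2 + o(1)$ passes to the limit. Since $S$ is bounded and continuous on all of $\Dens$ (it is the trace of a bounded continuous function of $\rho$), (i) is easy; the subtlety is entirely in (ii)--(iii), i.e., in verifying that shortening/smoothing the path does not destroy the second-order inequality in the limit. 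This is exactly the technical content handled in \cite{DS08}, so I would present the reduction carefully and then refer to that reference for the limiting argument, noting the one structural input specific to our setting --- that $\Hess_\rho S$ is bounded below by $\kappa\,g_\rho$ \emph{uniformly} over $\Dens_+$ --- which is what allows the estimate to survive the passage to the boundary.
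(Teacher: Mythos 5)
There is a genuine gap, and it sits exactly where you defer to the literature. Your core step is the chain $\ddtt S(\rho(t)) = \Hess_{\rho(t)}S(\nabla U(t),\nabla U(t)) \geq \kappa\,g_{\rho(t)}(\dot\rho(t),\dot\rho(t))$, but the first equality is only valid when $(\rho(t),U(t))$ satisfies \emph{both} equations in \eqref{eq:EL-Dens}: along a curve satisfying only the continuity equation one gets
\begin{align*}
\ddtt S(\rho(t)) = \Hess_{\rho(t)}S(\nabla U(t),\nabla U(t))
 + \bip{\Num\rho(t)\,,\ \dot U(t) + \tfrac12\,\rho(t)\flat(\nabla U(t),\nabla U(t))}\;,
\end{align*}
and the regularized curves $u^\eps$ you construct (heat flow for a short time, or convex combination with $I$) do \emph{not} satisfy the Hamilton--Jacobi part of \eqref{eq:EL-Dens}, so this extra term appears and is not controlled by the Hessian bound. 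Near $\partial\Dens$ the metric degenerates, so keeping the speed of $u^\eps$ bounded forces the potentials $U^\eps$ (and $\dot U^\eps$) to blow up, and there is no reason the error term is $o(1)$. Nor can you sidestep this by replacing $u^\eps$ with exact geodesics between interior approximations of the endpoints: since the boundary is at finite distance, minimizing geodesics between points of $\Dens_+$ may themselves touch $\partial\Dens$, so the ``smooth interior geodesic'' you would need may not exist. Finally, your appeal to \cite{DS08} is for something that reference does not do: its content is precisely a way of \emph{avoiding} the regularize-a-geodesic-and-pass-to-the-limit scheme, not a toolbox for carrying it out.

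The paper's actual argument is Eulerian in substance, not just in name. One takes an \emph{arbitrary} smooth curve $\{\rho^s\}_{s\in[0,1]}$ in $\Dens_+$ (an approximate geodesic), flows it by the semigroup with $s$-dependent time, $\rho^s_t := {\mathcal P}_{st}\rho^s$, defines $U^s_t$ through the continuity equation in $s$, and proves the identity
\begin{align*}
\tfrac12\,\partial_t \cA(\rho^s_t,U^s_t) + \partial_s S(\rho^s_t) = -\,s\,\cB(\rho^s_t,U^s_t) \leq -\,\kappa\, s\, \cA(\rho^s_t,U^s_t)\;,
\end{align*}
with $\cA = \|\nabla U\|^2_\rho$ and $\cB = \Hess_\rho S(\nabla U,\nabla U)$; the interchange $\partial_t\partial_s\rho^s_t = \partial_s\partial_t\rho^s_t$ and Lemma \ref{lem:N-hat} supply the needed commutation relations. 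The machinery of \cite[Section 3]{DS08} (see also \cite[Theorem 4.4]{EM11}) then converts this action--entropy differential inequality into $\kappa$-convexity of $S$ along constant-speed geodesics of the metric space $(\Dens,d)$, with no regularity or interiority assumption on the geodesics. If you want to complete your write-up, you should prove this identity (or an equivalent one) rather than attempt the limiting argument in your step (iii), which as stated does not go through.
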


\begin{proof}
For $\rho \in \Dens$ and $U \in \Cl_0$ we set 
\begin{align*}
 \cA(\rho, U)
   =  &\  \| \nabla U \|_\rho^2  =
    \ip{\hrho  \contr\,\nabla U\ ,\, \nabla U}\;,\\
 \cB(\rho, U)
    = &\ \Hess_\rho S(\nabla U, \nabla U) 
= 
    \ip{\hrho  \contr\,\nabla U\ ,\, \nabla \Num U}
- \frac12
  \ip{\hN\rho\, \contr\, \nabla U \ , \nabla U }\;.
\end{align*}

Let $\{\rho^s\}_{s \in [0,1]}$ be a smooth curve in $\Dens_+$ and set $\rho_t^s := {\mathcal P}_{st} \rho^s$ for $t \geq 0$. Let $\{U_t^s\}_{s \in [0,1]}$ be a smooth curve in $\Cl_0$ satisfying the
  continuity equation
\begin{align*}
  \partial_s \rho_t^s + \dive ( \hrho_t^s \contr \nabla U_t^s) =
  0\;, \qquad s \in [0,1]\;.
\end{align*}
We claim that the identity
\begin{align*}
 \frac12 \partial_t \cA(\rho_t^s, U_t^s)
 	 + \partial_s \Ent(\rho_t^s) = - s \cB(\rho_t^s, U_t^s)
\end{align*}
holds for every $s \in [0,1]$ and $t \geq 0$.
Once this is proved, the result follows from the argument in \cite[Section 3]{DS08} (see also \cite[Theorem 4.4]{EM11} where this program has been carried out in a discrete setting). 

To prove the claim, we calculate
\begin{equation}\begin{aligned}\label{eq:deriv-ent}
\partial_s \Ent(\rho_t^s)
  & = \bip{I + \log \rho_t^s \ ,\,  \partial_s \rho_t^s}
\\& = - \bip{I + \log \rho_t^s \ ,\,
		 \dive ( \hrho_t^s \contr \nabla U)}
\\& =  \bip{ \nabla \log \rho_t^s \ ,\,
   				\hrho_t^s \contr \nabla U_t^s}
\\& =  \bip{ \nabla \rho_t^s\ ,\, \nabla U_t^s}
\\& =   \bip{U_t^s \ ,\,\Num \rho_t^s}\;.
\end{aligned}\end{equation}
Furthermore, 
\begin{align*}
 \frac12 \partial_t \cA(\rho_t^s, U_t^s)
&  =  \bip{ \hrho_t^s \contr \partial_t\nabla U_t^s\ ,\, 
    			\nabla U_t^s}
 + \frac12
			 \bip{ \partial_t\hrho_t^s \contr \nabla U_t^s\ ,\,
    			\nabla U_t^s }
 \\& =: I_1 + I_2\;.			
\end{align*}
In order to simplify $I_1$ we claim that 
\begin{align} 
 \label{eq:partial-ts}
  -\dive \big( (\partial_t \hrho_t^s) \contr \nabla U_t^s \big) 
  -\dive \big( \hrho_t^s \contr \partial_t \nabla U_t^s \big)
   & =  s \Num\big(\dive(\hrho_t^s \contr\nabla U_t^s)\big)
    - \Num \rho_t^s\;,\\
  \label{eq:partial-t}
  \partial_t \hrho_t^s & = -s \hN \rho_t^s\;.
\end{align}
To show \eqref{eq:partial-ts}, note that the left-hand side equals
$\partial_t \partial_s \rho_t^s$, while the right-hand side equals
$\partial_s \partial_t \rho_t^s$. The identity \eqref{eq:partial-t}
follows from Lemma \ref{lem:N-hat}.

Integrating by parts repeatedly and using \eqref{eq:deriv-ent},
\eqref{eq:partial-ts} and \eqref{eq:partial-t}, we obtain
\begin{align*}
  I_1 
  & = - \bip{ U_t^s\ ,\,
    		\dive (\hrho_t^s \contr \partial_t\nabla U_t^s)}
\\  & =  - \bip{ U_t^s\ ,\, \Num \rho_t^s}
	    + s \bip{ U_t^s\ ,\,
	 \Num\big(\dive (\hrho_t^s \contr \nabla U_t^s)\big)}
\\& \qquad  +  \bip{ U_t^s\ ,\,
  		   \dive
		    \big(  (\partial_t \hrho_t^s) \contr \nabla U_t^s \big) }
\\  & =  - \partial_s \Ent(\rho_t^s)
		 -  s  \bip{ \hrho_t^s \contr \nabla U_t^s\ ,\,
   			\nabla \Num U_t^s}  
	 + s  \bip{\hN \rho_t^s \contr \nabla U_t^s\ ,\,
	    \nabla U_t^s}\;.
\end{align*}
Taking into account that
\begin{align*}
 I_2 = -\frac{s}2 
				\bip{\hN \rho_t^s \contr \nabla U_t^s\ ,\,
    					\nabla U_t^s }\;,
\end{align*}
the result follows by summing the expressions for $I_1$ and $I_2$.
\end{proof}

\section{Direct verification of the $1$-convexity of the entropy}
\label{sec:direct}

Our results in this section support the conjecture made in Remark~\ref{rem:Hess}. We shall show that for $n=1,2$, the entropy is $1$-convex along geodesics in the metric space $(\Dens, d)$. This notion of convexity may be seen as a Fermionic analog of McCann's displacement convexity \cite{McC97}, which corresponds to convexity along geodesics in the $2$-Wasserstein space of probability measures.

\subsection{The 1-dimensional case}

In this section we shall perform some explicit computations in the Riemannian manifold $\Dens_+$ in the special case where the Clifford algebra is 1-dimensional.

In this case the Clifford algebra is commutative and consists of all elements of the form $X = x I + y Q$ with $x,y \in \C$ and $Q = Q_1$. The set of probability densities is given by 
\begin{align*}
 \Dens = \{ \rho_y = I + y Q \ : \  -1 \leq y \leq 1 \}\;,
\end{align*}
and $\rho_y$ belongs to $\Dens_+$ if and only if $-1 < y < 1$. Our aim is to calculate the distance $d(\rho_{y_0}, \rho_{y_1})$ explicitly. For this purpose, we observe that for $p > 0$,
\begin{align*}
  (\rho_y)^p & = (1-y)^p \frac{1-Q}{2}
               + (1+y)^p \frac{1+Q}{2}
           \\& = \frac{(1+y)^p  + (1-y)^p }{2} I
                  + \frac{(1+y)^p  - (1-y)^p }{2} Q
           \\& =: c_p(y) I + d_p(y) Q        \;.
\end{align*}
Note also that $(\Gamma(\rho_y))^p =  c_p(-y) I + d_p(-y) Q$. Therefore, if $U = u_0 I + u Q$ and $\VV = \nabla U = u I$, then  
\begin{align*}
 (\Gr(\rho_y), \rho_y)\# \VV 
  & =   \int_0^1 \Big(  c_{1-p}(-y) I + d_{1-p}(-y) Q \Big)  
   				\Big(  c_{p}(y) I + d_{p}(y) Q \Big) \dd p \cdot u I
 \\&  = \int_0^1 (1-y)^{1-p} y^p \dd p \cdot  u I
 \\&  = \frac{ y}{\arctanh(y)} u I\;.
\end{align*}
We infer that 
\begin{align*}
 \dive (\Gr(\rho_y), \rho_y)\# \VV 
   = -\frac{  y}{\arctanh(y)}u Q\;,
\end{align*}
hence, if $\rho(t) = \rho_{y(t)}$ and $\nabla U(t) = u(t) I$, then 
the continuity equation
\begin{align*}
 \ddt \rho(t)  
 + \dive \left( \left(\Gamma(\rho(t)),\rho(t)\right)\#{\nabla U}(t)\right)
 = 0
\end{align*}
is equivalent to 
\begin{align}\label{eq:CE-dim1}
 \dot y(t) -  \frac{y(t)}{\arctanh(y(t))} u(t) = 0\;.
\end{align}
Furthermore, since
\begin{align*}
 \| \nabla U(t) \|_{\rho(t)}^2
&   = - \bip{U(t),\dive \left( \left(\Gamma(\rho(t)), \rho(t)\right)\#{\nabla U}(t)\right)}
 \\&  = \frac{  y(t)}{\arctanh(y(t))}u^2(t)\;,
\end{align*}
we obtain  for $y_0, y_1 \in (-1,1)$,
\begin{align}\label{eq:min-pblm}
  d( \rho_{y_0}, \rho_{y_1})^2
    = \inf_{y,u}  
      \bigg\{
      \int_0^1 \frac{ y(t)}{\arctanh(y(t))} u^2(t) \dd t 
      \bigg\}\;,
\end{align}
where the infimum runs over all smooth functions $y : [0,1] \to (-1,1)$ and $u : [0,1] \to \R$ satisfying \eqref{eq:CE-dim1}
with boundary conditions $y(0) = y_0$ and $y(1) = y_1$. 

This metric coincides with the Riemannian metric studied in \cite[Section 2]{Ma11} in the special case of a Markov chain $K$ on a two-point space $\mathcal{X} = \{ a,b \}$ with transition probabilities $K(a,a) = K(a,b) = K(b,a) = K(b,b) = \frac12$. The minimization problem in \eqref{eq:min-pblm} can be solved explicitly (see \cite[Theorem 2.4]{Ma11}), and for $-1 < y_0 < y_1 < 1$ one obtains 
\begin{align} \label{eq:explicit}
 d(\rho_{y_0}, \rho_{y})
   = \int_{y_0}^{y_1} \sqrt{ \frac{\arctanh(y)}{y} } \dd y\;.
\end{align}

Note that the function $y \mapsto  \sqrt{ \frac{y}{\arctanh(y)} }$ diverges as $y \to \pm 1$; this corresponds to the fact that the Riemannian metric degenerates at the boundary of $\Dens_+$. However, the improper integral in \eqref{eq:explicit} does converge if $y_0 = -1$ or $y_1 =1$, which can be seen directly and can also be inferred from Theorem \ref{tal} and Proposition \ref{prop:extension}. 

Let $-1 < y_0 < y_1 < 1$. It has been shown in \cite[Proposition 2.7]{Ma11} that the geodesic equation for a curve $[0,1] \ni t \mapsto \rho_{y(t)} \in \Dens_+$ connecting $\rho_{y_0}$ and $\rho_{y_1}$, is given by
\begin{align}\label{eq:geod-1}
  y'(t) = d(\rho_{y_0}, \rho_{y_1}) \sqrt{\frac{\arctanh(y(t))}{y(t)}}\;.
\end{align}
Moreover, if $y(t)$ satisfies \eqref{eq:geod-1}, then the second derivative of the entropy is given by
\begin{align*}
\ddtt S(\rho_{y(t)}) = \frac{d(\rho_{y_0}, \rho_{y_1})^2}{2} 
			\bigg( 1 + \frac{1}{1-y(t)^2} \frac{y(t)}{\arctanh(y(t))}\bigg)\;,
\end{align*}
which implies that
\begin{align*}
  S(\rho_{y(t)}) \leq (1-t)S(\rho_{y_0}) + t S(\rho_{y_1}) 
   				 - \frac{1}{2} t(1-t) d(\rho_{y_0},\rho_{y_1})^2\;,
\end{align*}
thus $S$ is 1-convex along geodesics. We refer to \cite[Section 2]{Ma11} for more details.

\subsection{The 2-dimensional case}

As in the 1-dimensional case, our goal is to obtain an explicit formula for the Hessian of the entropy $S$ and to show that it is bounded from below. First we shall describe the set of probability densities. For this purpose, it will be useful to introduce the notation 
\begin{align*}
 \rho_\rr = I + x Q_1 + y Q_2 + i z Q_1 Q_2
\end{align*}
for $\rr = (x,y,z) \in \C^3$.

With this notation, the set of probability densities can be characterized as follows.

\begin{lemma}\label{lem:densities}
We have
\begin{align*}
 \Dens = \{ \rho_\rr \in \Cl \ : \ \rr = (x,y,z) \in \bar B  \}\;,
\end{align*}
where $\bar B$ denotes the closure of the unit ball in $\R^3$. Moreover, $\rho_\rr$ belongs to $\Dens_+$ if and only if $\rr$ belongs to the open unit ball $B$.
\end{lemma}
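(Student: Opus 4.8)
The plan is to reduce the statement to an elementary spectral computation. Writing a general self-adjoint element of $\Cl$ in the basis $\{I, Q_1, Q_2, Q_1 Q_2\}$ and using that $Q_1, Q_2$ are self-adjoint while $(Q_1 Q_2)^* = Q_2 Q_1 = - Q_1 Q_2$, one sees that the real linear span of $\{I, Q_1, Q_2, i Q_1 Q_2\}$ is exactly the set of self-adjoint elements of $\Cl$. By \eqref{tauun} the trace of such an element equals the coefficient of $I$, so the condition $\tau(\rho) = 1$ forces that coefficient to be $1$; hence $\Dens \subseteq \{ \rho_\rr : \rr \in \R^3 \}$, and it remains only to determine for which $\rr \in \R^3$ one has $\rho_\rr \geq 0$, respectively $\rho_\rr > 0$.

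To this end I would compute the square of $A_\rr := \rho_\rr - I = x Q_1 + y Q_2 + i z Q_1 Q_2$. A direct application of the canonical anticommutation relations \eqref{CAR} shows that all cross terms vanish and that $(x Q_1)^2 = x^2 I$, $(y Q_2)^2 = y^2 I$, $(i z Q_1 Q_2)^2 = z^2 I$, so that $A_\rr^2 = |\rr|^2 I$ with $|\rr|^2 = x^2 + y^2 + z^2$. Since $A_\rr$ is self-adjoint, its spectrum is contained in $\{ -|\rr|, |\rr| \}$; and since $\tau(A_\rr) = 0$ by \eqref{tauun} while $A_\rr = \pm |\rr| I$ would give $\tau(A_\rr) = \pm |\rr| \neq 0$ unless $\rr = 0$, both values $-|\rr|$ and $|\rr|$ actually occur in the spectrum when $\rr \neq 0$. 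Thus the smallest eigenvalue of $\rho_\rr = I + A_\rr$ equals $1 - |\rr|$ for every $\rr \in \R^3$ (the case $\rr = 0$ being trivial). Equivalently, one may identify $\Cl$ with $M_2(\C)$ by realizing $Q_1, Q_2$ as two anticommuting self-adjoint unitaries (e.g. $\sigma_1, \sigma_2$), so that $\rho_\rr = I + x \sigma_1 + y \sigma_2 - z \sigma_3$ has eigenvalues $1 \pm |\rr|$ by the standard Pauli-matrix computation.

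Given this, the conclusion is immediate: $1 + |\rr| > 0$ always, so $\rho_\rr \geq 0$ if and only if $1 - |\rr| \geq 0$, i.e. $\rr \in \bar B$, and $\rho_\rr$ is strictly positive if and only if $1 - |\rr| > 0$, i.e. $\rr \in B$; this is precisely the assertion of the lemma. I do not expect any real obstacle here — the only points requiring a line of care are the cancellation of the mixed terms in $A_\rr^2$ and the use of tracelessness to pass from ``spectrum $\subseteq \{1 \pm |\rr|\}$'' to ``minimal eigenvalue $= 1 - |\rr|$'', both of which are routine, and the latter can be sidestepped altogether by working in the explicit $M_2(\C)$ model.
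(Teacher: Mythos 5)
Your proof is correct and takes essentially the same route as the paper: both arguments reduce the lemma to identifying the spectrum of a general self-adjoint element $wI + xQ_1 + yQ_2 + izQ_1Q_2$ (with trace $w$), which the paper simply asserts to be $w \pm \sqrt{x^2+y^2+z^2}$. Your computation $A_\rr^2 = |\rr|^2 I$ combined with tracelessness (or the explicit Pauli-matrix realization) is just the detail behind the paper's ``one readily checks,'' so there is nothing to add.
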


\begin{proof}
Let $X \in \Cl$ be of the form
\begin{align*}
 X = w +  x Q_1 + y Q_2 +  i z Q_1 Q_2
\end{align*} 
for some $w, x, y, z \in \C$. Clearly, $X$ is self-adjoint if and only if $w, x, y, z \in \R$. In this case, one readily checks that the spectrum of $X$ consists of the two elements
\begin{align*}
 w \pm \sqrt{x^2 + y^2 + z^2}\;,
\end{align*}
both of which have multiplicity $2$. This implies both assertions, taking into account that $\tau(X) = w$.
\end{proof}

In order to obtain explicit formulas for expressions of the form $(\Gr(\rho), \rho) \# \nabla U$ with $\rho \in \Dens_+$ and $U \in \Clo$, one needs to evaluate fractional powers of $\rho$. The following result describes the functional calculus of elements in $\Dens$.

\begin{lemma}\label{lem:funct-calc}
For $\rr \in B \setminus \{ 0 \}$ and $f : [0,2] \to \R$ we have
\begin{align*}
 f(\rho_\rr) = 
    \frac{f(1 - |\rr|)}{2} \rho_{-\nn}
  + \frac{f(1 + |\rr|)}{2} \rho_{\nn}\;, 
\end{align*}
where $\nn = \frac{1}{|\rr|}\rr$.
\end{lemma}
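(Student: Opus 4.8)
The plan is to read off the spectral decomposition of $\rho_\rr$ from the computation already performed in the proof of Lemma~\ref{lem:densities}, and to observe that its two spectral projections are, up to the factor $\tfrac12$, precisely the elements $\rho_{\pm\nn}$. Write $\rho_\rr = I + Y_\rr$ with $Y_\rr := x Q_1 + y Q_2 + i z Q_1 Q_2$. First I would record that, by the canonical anticommutation relations (all cross terms cancel and $(Q_1 Q_2)^2 = -I$), one has $Y_\rr^2 = (x^2 + y^2 + z^2)\, I = |\rr|^2 I$; this is exactly the identity underlying the claim in Lemma~\ref{lem:densities} that the spectrum of $w + x Q_1 + y Q_2 + i z Q_1 Q_2$ equals $\{\, w \pm |\rr| \,\}$. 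Since $\rr \in B \subset \R^3$, the operator $Y_\rr$ is self-adjoint.

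Next, set $P_\pm := \tfrac12\big(I \pm |\rr|^{-1} Y_\rr\big)$. Because $\nn = |\rr|^{-1}\rr$ we have $P_\pm = \tfrac12\,\rho_{\pm\nn}$, and the relation $Y_\rr^2 = |\rr|^2 I$ immediately gives $P_+ + P_- = I$, $P_\pm^2 = P_\pm$, $P_+ P_- = P_- P_+ = \tfrac14\big(I - |\rr|^{-2} Y_\rr^2\big) = 0$, and $P_\pm^* = P_\pm$, so $\{P_-, P_+\}$ is a complete system of orthogonal projections. A one-line computation then confirms the spectral identity
$$\rho_\rr = (1 - |\rr|)\, P_- + (1 + |\rr|)\, P_+\;,$$
since the right-hand side equals $\tfrac12\big[(1-|\rr|) + (1+|\rr|)\big] I + \tfrac{1}{2|\rr|}\big[(1+|\rr|) - (1-|\rr|)\big] Y_\rr = I + Y_\rr$.

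Finally, the functional calculus for the self-adjoint operator $\rho_\rr$, whose spectrum is $\{1 - |\rr|, 1 + |\rr|\} \subset [0,2]$, yields $f(\rho_\rr) = f(1 - |\rr|)\, P_- + f(1 + |\rr|)\, P_+$ for any $f$ defined on $[0,2]$; substituting $P_\pm = \tfrac12\,\rho_{\pm\nn}$ gives the assertion. I do not expect any genuine obstacle here: the only computation needed is $Y_\rr^2 = |\rr|^2 I$, which is already established in the proof of Lemma~\ref{lem:densities}, and everything else is the standard two-eigenvalue functional calculus.
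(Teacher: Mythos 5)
Your proof is correct and follows essentially the same route as the paper: both identify the spectral decomposition $\rho_\rr = (1-|\rr|)\,\tfrac12\rho_{-\nn} + (1+|\rr|)\,\tfrac12\rho_{\nn}$ and conclude by the two-point functional calculus. The only (cosmetic) difference is that you verify that $\tfrac12\rho_{\pm\nn}$ are mutually orthogonal projections by the direct computation $Y_\rr^2 = |\rr|^2 I$, whereas the paper first characterizes which elements of this form are projections and when two of them are orthogonal.
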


\begin{proof}
One easily checks that an element 
\begin{align*}
 X = w +  x Q_1 + y Q_2 +  i z Q_1 Q_2
\end{align*}
is a projection if and only if $X = \frac12 \rho_\rr$ for some $\rr \in \partial B$, where $\partial B$ denotes the unit sphere in $\R^3$.
Furthermore, two projections $X^{(1)} = \frac12 \rho_{\rr^{(1)}}$ and $X^{(2)} = \frac12 \rho_{\rr^{(2)}}$ are mutually orthogonal if and only if $\rr^{(1)} = - \rr^{(2)}$. As a consequence, the spectral decomposition of $\rho_{\rr}$ with $\rr \in B$ is given by 
\begin{align*}
 \rr =  (1 - |\rr|) P_{(-)} + (1+|\rr|) P_{(+)}
\end{align*}
where $P_{(\pm)} = \frac12 \rho_{\pm \nn}$ and $\nn = \frac{1}{|\rr|}\rr$. This implies the desired result.
\end{proof}

In the following computations, 
an important role will be played by the logarithmic mean $\mu(x,y)$, which is defined for $x , y \geq 0$ by
\begin{align*}
 \mu(x,y) = \int_0^1 x^{1-\alpha} y^\alpha \dd \alpha\;.
\end{align*}

Let us fix the notation that shall be used throughout the remainder of this section. We consider a fixed element $\rho \in \Dens_+$ of the form 
\begin{align*}
\rho = I + x Q_1 + y Q_2 + iz Q_1 Q_2
\end{align*}
 for some $x, y, z \in \R$ satisfying
\begin{align*}
r := \sqrt{x^2 + y^2 + z^2} \in (0,1)\;.
\end{align*} 
It will be useful to introduce the quantities
\begin{align*}
\theta := \mu(1-r,1+r) =  \frac{r}{\arctanh(r)}\;.
\end{align*}
Furthermore, we set $a := x/r$ , $b: = y/r$, $c: = z/r$, and 
\begin{align*}
 \mm = (-a,-b, c)\;, \qquad
 \nn = ( a, b, c)\;.
\end{align*}

\begin{lemma}\label{lem:hat-explicit}
Let $\rho \in \Dens_+$ and $U \in \Cl$.
With the notation from above we have
\begin{align*}
 (\Gr(\rho), \rho)\# U 
   = \frac14 \sum_{\eps_1, \eps_2 \in \{-1,1\}}
      \mu(1+\eps_1 r , 1+ \eps_2 r)
      \rho_{\eps_1 \mm} U \rho_{\eps_2 \nn}\;.
\end{align*}
\end{lemma}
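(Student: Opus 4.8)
The plan is to reduce everything to the spectral decomposition $\rho = (1-r)P_{(-)} + (1+r)P_{(+)}$ furnished by Lemma~\ref{lem:funct-calc}, together with the corresponding decomposition $\Gr(\rho) = (1-r)P_{(-)}' + (1+r)P_{(+)}'$ where $P_{(\pm)}' = \tfrac12\rho_{\pm\mm}$ are the spectral projections of $\Gr(\rho)$. Note that $\Gr$ sends $Q_1\mapsto -Q_1$, $Q_2\mapsto -Q_2$, $Q_1Q_2\mapsto Q_1Q_2$, so indeed $\Gr(\rho) = \rho_\mm'$ with $\mm = (-a,-b,c)$ in the notation fixed above, and its eigenvalues are again $1\mp r$ with eigenprojections $\tfrac12\rho_{\mp\mm}$. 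First I would simply insert these two spectral resolutions into the definition
$$
 (\Gr(\rho),\rho)\# U = \int_0^1 \Gr(\rho)^{1-\alpha}\, U\, \rho^\alpha \dd\alpha\;,
$$
expand the product, and use that $\Gr(\rho)^{1-\alpha} = (1-r)^{1-\alpha}\tfrac12\rho_{-\mm} + (1+r)^{1-\alpha}\tfrac12\rho_{\mm}$ and similarly $\rho^\alpha = (1-r)^{\alpha}\tfrac12\rho_{-\nn} + (1+r)^{\alpha}\tfrac12\rho_{\nn}$. This produces a sum over $\eps_1,\eps_2\in\{-1,1\}$ of terms $\tfrac14\,\rho_{\eps_1\mm}\,U\,\rho_{\eps_2\nn}$ multiplied by $\int_0^1 (1+\eps_1 r)^{1-\alpha}(1+\eps_2 r)^{\alpha}\dd\alpha$, and the latter integral is by definition $\mu(1+\eps_1 r, 1+\eps_2 r)$. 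That is exactly the claimed formula.

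The only real points that need care are the following. One must check the spectral claims used above: that the eigenvalues of $\rho$ are $1\pm r$ with projections $\tfrac12\rho_{\pm\nn}$ (this is Lemma~\ref{lem:funct-calc} applied to $f(t)=t^\alpha$, or the spectral computation in Lemma~\ref{lem:densities}), and the analogous statement for $\Gr(\rho)$ — which follows because $\Gr$ is a $*$-automorphism, so it carries the spectral resolution of $\rho$ to that of $\Gr(\rho)$, and $\Gr(\rho_{\pm\nn}) = \rho_{\pm\mm}$ by inspection of the action of $\Gr$ on the basis $I, Q_1, Q_2, Q_1Q_2$. One should also note that $\rho^\alpha$ and $\Gr(\rho)^{1-\alpha}$ are well-defined for $\rho\in\Dens_+$ since $1-r>0$, and that the functional calculus formula extends from the case $\rr\in B\setminus\{0\}$ covered by Lemma~\ref{lem:funct-calc}; the degenerate case $\rr = 0$, i.e.\ $\rho = I$, can be handled separately and directly (both sides equal $U$), or simply excluded since the statement concerns the fixed $\rho$ with $r\in(0,1)$.

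I do not expect a genuine obstacle here: this is a direct computation once the two spectral decompositions are in hand, and the appearance of the logarithmic mean is automatic from $\int_0^1 s^{1-\alpha}t^\alpha\dd\alpha = \mu(s,t)$. If anything, the mild bookkeeping nuisance is keeping straight which projection ($\mm$ or $\nn$) sits on which side of $U$, which is dictated by the fact that in $(\Gr(\rho),\rho)\#U$ the grading-twisted factor $\Gr(\rho)$ multiplies on the left and $\rho$ on the right; this is why $\rho_{\eps_1\mm}$ appears to the left of $U$ and $\rho_{\eps_2\nn}$ to the right in the final sum.
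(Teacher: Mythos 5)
Your proposal is correct and follows exactly the paper's route: the paper's proof is simply ``this readily follows from Lemma \ref{lem:funct-calc},'' i.e.\ insert the spectral resolutions $\rho^\alpha=\sum_{\eps_2}\tfrac12(1+\eps_2 r)^\alpha\rho_{\eps_2\nn}$ and $\Gr(\rho)^{1-\alpha}=\sum_{\eps_1}\tfrac12(1+\eps_1 r)^{1-\alpha}\rho_{\eps_1\mm}$ into $\int_0^1\Gr(\rho)^{1-\alpha}U\rho^\alpha\,\dd\alpha$ and identify the resulting integrals with the logarithmic mean, just as you do. Your additional remarks (that $\Gr(\rho)=\rho_{r\mm}$ so Lemma \ref{lem:funct-calc} applies to it as well, and that $r\in(0,1)$ excludes the degenerate case $\rho=I$) are accurate bookkeeping, not deviations.
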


\begin{proof}
This readily follows from Lemma \ref{lem:funct-calc}.
\end{proof}

With the help of this lemma, it is straightforward to obtain the following identities.

\begin{lemma}\label{lem:tedious}
For $\rho \in \Dens_+$ the following identities hold:
\begin{align*}
 (\Gr(\rho), \rho)\# I 
&    = 
     \Big(\theta(a^2 + b^2) + c^2 \Big) I 
 + i  b c(1-\theta) Q_1 -i a c(1-\theta) Q_2 + ic r Q_1 Q_2\;,\\
 (\Gr(\rho), \rho)\# (iQ_1)
&    = 
 b c(1-\theta) I 
+ i(\theta(a^2 + c^2) + b^2) Q_1 -i a b(1-\theta) Q_2 + ib r Q_1 Q_2\;,\\
 (\Gr(\rho), \rho)\# (iQ_2)
& = 
     -a c(1-\theta) I 
    -i a b(1-\theta) Q_1 + i(\theta(b^2 + c^2) + a^2) Q_2
      - ia r Q_1 Q_2\;.
\end{align*}
\end{lemma}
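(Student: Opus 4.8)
The plan is to prove Lemma~\ref{lem:tedious} as a direct computation from Lemma~\ref{lem:hat-explicit}, reducing everything to algebra in the finite-dimensional Clifford algebra $\Cl$ generated by $Q_1,Q_2$. First I would record the multiplication table for the relevant elements $\rho_{\pm\mm}$ and $\rho_{\pm\nn}$. Writing $\mm=(-a,-b,c)$ and $\nn=(a,b,c)$, we have
\begin{align*}
 \rho_{\eps_1\mm} &= I - \eps_1 a Q_1 - \eps_1 b Q_2 + i\eps_1 c Q_1 Q_2\;,\\
 \rho_{\eps_2\nn} &= I + \eps_2 a Q_1 + \eps_2 b Q_2 + i\eps_2 c Q_1 Q_2\;,
\end{align*}
and one computes each of the products $\rho_{\eps_1\mm}\,X\,\rho_{\eps_2\nn}$ for $X\in\{I, iQ_1, iQ_2\}$ by expanding and using the relations $Q_i^2=I$, $Q_1Q_2=-Q_2Q_1$, $(Q_1Q_2)^2=-I$, together with $a^2+b^2+c^2=1$. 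The key observation that keeps this manageable is that the sum over $\eps_1,\eps_2\in\{-1,1\}$, weighted by $\mu(1+\eps_1 r,1+\eps_2 r)$, only ever sees four combinations of the logarithmic means; it is natural to introduce abbreviations
\begin{align*}
 \mu_{++}&=\mu(1+r,1+r)=1+r\;,\quad \mu_{--}=\mu(1-r,1-r)=1-r\;,\\
 \mu_{+-}&=\mu_{-+}=\mu(1+r,1-r)=\theta\;,
\end{align*}
so that odd-in-$\eps_1$ or odd-in-$\eps_2$ terms are weighted by $\tfrac14(\mu_{++}-\mu_{--}\pm\cdots)$ and collapse to combinations of $1$, $r$, and $\theta$.

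The second step is bookkeeping: for each of $X=I$, $X=iQ_1$, $X=iQ_2$, collect the coefficient of each basis element $I, Q_1, Q_2, Q_1Q_2$ in $\tfrac14\sum_{\eps_1,\eps_2}\mu(1+\eps_1 r,1+\eps_2 r)\rho_{\eps_1\mm}X\rho_{\eps_2\nn}$. Each such coefficient is a polynomial in $a,b,c$ of degree at most two times one of the four mean-values above; after summing, the pieces weighted by $\mu_{++}$ and $\mu_{--}$ either cancel or combine into terms with coefficient $1$ (using $a^2+b^2+c^2=1$) and the cross terms produce the $\theta$- and $(1-\theta)$-dependence. For instance, in the $X=I$ case the coefficient of $I$ will come out as $\tfrac14[\mu_{++}(a+b+c\cdot i\text{-stuff})\cdots]$; organizing by parity one gets the stated $\theta(a^2+b^2)+c^2$. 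I would verify the purely commutative sanity check first: when $a=b=0$, $c=1$ (so $\mm=\nn=(0,0,1)$ and everything commutes), Remark~\ref{flatrem}-type reasoning forces $(\Gr(\rho),\rho)\#I = (\Gr(\rho),\rho)\widehat{\#}{}^{-1}$-free answer to reduce to something consistent, and indeed the formulas give $c^2 I + ic r Q_1Q_2$, which matches $\int_0^1(1-z)^{1-s}(1+z)^s\,ds$ applied spectrally. This kind of special-case check at $(a,b,c)=(1,0,0)$, $(0,1,0)$, $(0,0,1)$ pins down all coefficients.

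The only real obstacle is the sheer volume of the expansion — three choices of $X$, four $(\eps_1,\eps_2)$ pairs, and $4\times4$ products of basis elements each — so the risk is an arithmetic slip rather than a conceptual gap. To control this I would exploit two symmetries: (i) the map $Q_1\leftrightarrow Q_2$ combined with $a\leftrightarrow b$ is an algebra automorphism fixing $c$ and $\rho$-structure, which relates the $iQ_1$ and $iQ_2$ rows, so only the $X=I$ and $X=iQ_1$ computations are genuinely independent; and (ii) within a single computation, separating terms by their degree of homogeneity in $(\eps_1,\eps_2)$ isolates which mean-combination multiplies them. With these reductions the proof is a short, if slightly tedious, verification, and I would present it as "expand using Lemma~\ref{lem:hat-explicit} and the CAR relations, then collect terms," displaying perhaps the $X=I$ case in modest detail and indicating that the other two follow identically (with the $Q_1\leftrightarrow Q_2$ symmetry giving the third from the second).
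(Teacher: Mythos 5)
Your approach is the same as the paper's: the paper proves Lemma \ref{lem:tedious} with exactly the direct computation you describe, namely expanding $\tfrac14\sum_{\eps_1,\eps_2}\mu(1+\eps_1 r,1+\eps_2 r)\,\rho_{\eps_1\mm}X\rho_{\eps_2\nn}$ from Lemma \ref{lem:hat-explicit} via the CAR relations and $a^2+b^2+c^2=1$, so your proposal is correct in substance. Two small caveats: the $Q_1\leftrightarrow Q_2$, $a\leftrightarrow b$ symmetry is an algebra automorphism only if you also send $c\mapsto-c$ (since $Q_1Q_2\mapsto -Q_1Q_2$), with which correction it does carry the second identity to the third; and the evaluations at $(a,b,c)=(1,0,0),(0,1,0),(0,0,1)$ serve only as sanity checks, not enough by themselves to pin down all the quadratic coefficients.
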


\begin{proof}
This follows from a direct computation based on Lemma \ref{lem:hat-explicit}.
\end{proof}

Using this lemma we can obtain an explicit expression for the Riemannian metric. With the notation from above we obtain the following result.

\begin{lemma}\label{lem:metric2d}
Let $\rho \in \Dens_+$ and let $U \in \Cl$ be of the form $U = u Q_1+ v Q_2 + i w Q_1 Q_2$ for some $u, v, w \in \R$. 
Then
\begin{align*}
 \ipo{ \nabla U, \nabla U }_{\rho}
   = \uu^T M(\rho) \uu\;,
\end{align*}   
where the right-hand side is a matrix-product with $\uu^T = (u,v,w)$ and
\begin{align*}
M(\rho) = \left(\begin{array}{ccc}
  \theta(a^2 + b^2) + c^2 & 0 & (\theta -1)ac \\
  0 &   \theta(a^2 + b^2) + c^2 &  (\theta -1)bc \\
  (\theta -1)ac &  (\theta -1)bc &   a^2 + b^2 + \theta(1 + c^2)
  \end{array}\right) \;.
\end{align*}
\end{lemma}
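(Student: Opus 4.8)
The plan is to compute $\ipo{\nabla U,\nabla U}_\rho = \bip{U,\, D(\rho)U}$ directly, where $D(\rho)U = -\dive\big[(\Gr(\rho),\rho)\#\nabla U\big]$, by applying Lemma~\ref{lem:tedious} term by term. First I would record that for $U = uQ_1 + vQ_2 + iwQ_1Q_2$ the gradient has components $\nabla_1 U = uI + iwQ_2$ and $\nabla_2 U = vI - iwQ_1$ (using $\nabla_i(Q^\aa) = Q_iQ^\aa$ when $\a_i=1$ and the product structure; one checks $\nabla_1(Q_1Q_2) = Q_1(Q_1Q_2) = Q_2$ and $\nabla_2(Q_1Q_2) = -Q_1$, the sign coming from the grading). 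Thus $\nabla U$ is a pair of elements of the form $\alpha I + \beta(iQ_j)$, and Lemma~\ref{lem:tedious} gives $(\Gr(\rho),\rho)\#$ applied to exactly the three generators $I$, $iQ_1$, $iQ_2$ that occur. By linearity this yields $(\Gr(\rho),\rho)\#\nabla_i U$ explicitly as an element of $\Cl$.

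Next I would apply $\dive = -\sum_i \nabla_i^*$ and then pair against $U$ in the $L^2(\tau)$ inner product. Two simplifications make this clean: first, $\bip{U, \dive \AA} = -\sum_i \bip{\nabla_i U, A_i}$ by definition of the adjoint, so I never need to compute $\nabla_i^*$ of the messy right-hand sides — I only need the $L^2(\tau)$ inner products of the explicit elements from Lemma~\ref{lem:tedious} against $\nabla_i U = uI + iwQ_2$ (resp.\ $vI - iwQ_1$). Second, the basis $\{I, Q_1, Q_2, iQ_1Q_2\}$ (suitably normalized — note $\|Q_1Q_2\|_{L^2(\tau)}^2 = \tau(Q_2Q_1Q_1Q_2)=1$) is orthonormal, so each inner product just reads off the matching coefficient. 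Collecting the coefficients of $u^2$, $v^2$, $w^2$, $uv$, $uw$, $vw$ produces the symmetric matrix $M(\rho)$.

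The one genuinely delicate point is bookkeeping: getting the off-diagonal entries right, in particular confirming that the $uv$-coefficient vanishes while the $uw$- and $vw$-coefficients are $(\theta-1)ac$ and $(\theta-1)bc$. Concretely, the $(1,1)$ entry comes from the $I$-coefficient of $(\Gr(\rho),\rho)\#I$, namely $\theta(a^2+b^2)+c^2$; the $(3,3)$ entry assembles $\theta(a^2+c^2)+b^2$ from the $iQ_1$-row and $\theta(b^2+c^2)+a^2$ from the $iQ_2$-row into $a^2+b^2+\theta(1+c^2)$ after using $a^2+b^2+c^2=1$; the cross terms $ac$, $bc$ appear from the mixed contributions (e.g.\ the $I$-coefficient $bc(1-\theta)$ of $(\Gr(\rho),\rho)\#(iQ_1)$ paired against the $iQ_2$-part of $\nabla_2 U$, with a sign). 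I would also check that the $uv$ terms cancel because the relevant coefficients in Lemma~\ref{lem:tedious} carry opposite signs ($-iab(1-\theta)Q_2$ in the $iQ_1$-row versus $-iab(1-\theta)Q_1$ in the $iQ_2$-row, which land on orthogonal basis elements relative to the test components). Finally, symmetry of $M(\rho)$ — and reality of the whole expression — is guaranteed a priori by Lemma~\ref{ipp} (self-adjointness of $D(\rho)$), which serves as a useful consistency check on the computation. The result then follows.
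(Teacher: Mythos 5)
Your proposal is correct and follows essentially the paper's own (unwritten) route: Lemma \ref{lem:metric2d} is obtained exactly by this direct computation, writing $\nabla_1 U = uI + iwQ_2$, $\nabla_2 U = vI - iwQ_1$, applying Lemma \ref{lem:tedious} by linearity, and reading off coefficients in the orthonormal basis $\{I, Q_1, Q_2, iQ_1Q_2\}$. Only your illustrative asides are slightly off: $\nabla_2 U$ has no $iQ_2$-part (the $vw$ cross term comes from the $I$-coefficient $bc(1-\theta)$ of $(\Gr(\rho),\rho)\#(iQ_1)$, weighted by $-w$, paired against the $I$-part $vI$, together with the $Q_1$-coefficient of $(\Gr(\rho),\rho)\#I$ paired against $iwQ_1$), and the $uv$-entry vanishes not by a sign cancellation but simply because $u$ and $v$ never occur in the same component of $\nabla U$; neither point affects the validity of the computation.
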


By a similar calculation one can compute the fist term appearing in the expression \eqref{eq:Hess} for the Hessian of the entropy $S$ at $\rho$.

\begin{lemma}\label{lem:Hess-1}
Let $\rho \in \Dens_+$ be as in Lemma \ref{lem:hat-explicit} and 
let $U \in \Cl$ be of the form $U = u Q_1+ v Q_2 + i w Q_1 Q_2$ for some $u, v, w \in \R$. 
Then,
\begin{align*}
 \ip{(\Gr(\rho), \rho) \# \nabla U, \nabla \Num U) }
   = \uu^T N_1(\rho) \uu\;,
\end{align*}   
where the right-hand side is a matrix-product with $\uu^T = (u,v,w)$ and
\begin{align*}
N_1(\rho) = \left(\begin{array}{ccc}
  \theta(a^2 + b^2) + c^2 & 0 &  \frac32 (\theta -1)ac \\
  0 &   \theta(a^2 + b^2) + c^2 &   \frac32 (\theta -1)bc \\
  \frac32(\theta -1)ac &   \frac32 (\theta -1)bc &  2( a^2 + b^2 + \theta(1 + c^2))
  \end{array}\right) \;.
\end{align*}
\end{lemma}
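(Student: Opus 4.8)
The plan is to reduce the claim to Lemma~\ref{lem:metric2d}, exploiting the simple action of the number operator. Since $\Num$ multiplies each basis element $Q^\aa$ by its degree $|\aa|$, for $U = u Q_1 + v Q_2 + i w Q_1 Q_2$ we get $\Num U = u Q_1 + v Q_2 + 2 i w Q_1 Q_2$; that is, $\Num U$ is obtained from $U$ by doubling the coefficient of the degree-two generator $i Q_1 Q_2$. A direct application of the definition of $\nabla_i$ gives $\nabla(Q_1) = (I,0)$, $\nabla(Q_2) = (0,I)$ and $\nabla(i Q_1 Q_2) = (i Q_2,\, -i Q_1)$, so that
\begin{align*}
 \nabla U &= u\, \nabla(Q_1) + v\, \nabla(Q_2) + w\, \nabla(i Q_1 Q_2)\;, \\
 \nabla \Num U &= u\, \nabla(Q_1) + v\, \nabla(Q_2) + 2w\, \nabla(i Q_1 Q_2)\;;
\end{align*}
in particular $\nabla \Num U$ is precisely $\nabla U$ after the replacement $w \mapsto 2w$.

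Next I would observe that the $3\times 3$ matrix $M(\rho)$ of Lemma~\ref{lem:metric2d} is exactly the Gram matrix of the triple $\big(\nabla(Q_1),\, \nabla(Q_2),\, \nabla(i Q_1 Q_2)\big)$ for the bilinear pairing $(\VV_1,\VV_2) \mapsto \ip{(\Gr(\rho),\rho)\#\VV_1,\, \VV_2}$. Indeed, since $Q_1$, $Q_2$ and $i Q_1 Q_2$ are self-adjoint, Lemma~\ref{ipp} shows that all nine of these pairings are real, and Lemma~\ref{pos} yields the symmetry $\ip{(\Gr(\rho),\rho)\# A,\, B} = \ip{A,\, (\Gr(\rho),\rho)\# B}$, so the matrix is symmetric and coincides with $M(\rho)$.

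Writing $c = (u,v,w)^T$ and $c' = (u,v,2w)^T$, bilinearity then gives
\begin{align*}
 \ip{(\Gr(\rho),\rho)\#\nabla U,\, \nabla \Num U}
   = \sum_{k,l=1}^{3} c_k\, c'_l\, M(\rho)_{kl}\;,
\end{align*}
and I would read off the associated symmetric matrix in $\uu = (u,v,w)^T$. Using $M(\rho)_{12} = 0$ and the symmetry of $M(\rho)$: the $u^2$, $v^2$ and $uv$ coefficients are unchanged; the $w^2$ coefficient equals $2 M(\rho)_{33}$; and the $uw$ coefficient equals $2M(\rho)_{13} + M(\rho)_{13} = 3 M(\rho)_{13}$, so the $(1,3)$ entry of the representing matrix is $\frac32 M(\rho)_{13}$, and similarly the $(2,3)$ entry is $\frac32 M(\rho)_{23}$. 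Substituting the entries of $M(\rho)$ from Lemma~\ref{lem:metric2d} produces exactly $N_1(\rho)$.

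I do not expect a genuine obstacle: this is routine bookkeeping, the only point deserving attention being that $\Num$ rescales the degree-one and degree-two parts of $U$ by different factors, which is exactly what makes $N_1(\rho)$ asymmetric relative to $M(\rho)$. As an alternative that matches the ``similar calculation'' announced just before the statement, one can bypass Lemma~\ref{lem:metric2d} altogether: compute $(\Gr(\rho),\rho)\#\nabla_i U$ for $i = 1, 2$ directly from Lemma~\ref{lem:tedious} by linearity, pair each with $\nabla_i \Num U$ using the orthonormality of $\{I, Q_1, Q_2, i Q_1 Q_2\}$, collect the coefficients of $u^2, v^2, w^2, uv, uw, vw$, and simplify with the identity $a^2 + b^2 + c^2 = 1$; this requires more arithmetic but no new ideas.
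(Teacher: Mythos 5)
Your argument is correct, and it reproduces the stated matrix $N_1(\rho)$: the gradients $\nabla Q_1=(I,0)$, $\nabla Q_2=(0,I)$, $\nabla(iQ_1Q_2)=(iQ_2,-iQ_1)$ are right, $\Num U$ is indeed $U$ with the coefficient of $iQ_1Q_2$ doubled (this follows from $\nabla_i^*\nabla_i Q^\aa=\a_i Q^\aa$, worth one line), and the bookkeeping $\;(N_1)_{jj}=M_{jj}$ for $j=1,2$, $(N_1)_{33}=2M_{33}$, $(N_1)_{13}=\tfrac32 M_{13}$, $(N_1)_{23}=\tfrac32 M_{23}\;$ is exactly what the symmetrization of $M(\rho)\,\mathrm{diag}(1,1,2)$ gives. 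Your route differs from the paper's: the text intends Lemma \ref{lem:Hess-1} to be obtained by a fresh direct computation in the spirit of Lemma \ref{lem:metric2d}, i.e.\ expanding $(\Gr(\rho),\rho)\#\nabla_i U$ via Lemma \ref{lem:tedious} and pairing with $\nabla_i\Num U$ using orthonormality of $\{I,Q_1,Q_2,iQ_1Q_2\}$ (your ``alternative'' is precisely that). Your primary argument instead recycles Lemma \ref{lem:metric2d}: since Lemma \ref{pos} gives $\ip{(\Gr(\rho),\rho)\#A,B}=\ip{A,(\Gr(\rho),\rho)\#B}$, so that the pairing you use is $\ipo{\cdot,\cdot}_\rho$, and Lemma \ref{ipp} gives realness and symmetry on gradients of self-adjoint elements, the symmetric matrix representing the bilinear form is determined by its quadratic form (polarization) and must be $M(\rho)$; then $\nabla\Num U$ is $\nabla U$ with $w\mapsto 2w$ and the result drops out. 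This buys economy and guarantees consistency with Lemma \ref{lem:metric2d} without new arithmetic, at the cost of the small extra justification that the Gram matrix really is $M(\rho)$ (which you correctly source from Lemmas \ref{pos} and \ref{ipp}); the paper's direct computation is longer but independent of Lemma \ref{lem:metric2d}. No gap in either version.
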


With some additional work the second part in the expression \eqref{eq:Hess} for the Hessian  can be characterized as well. It turns out that the following generalization of the logarithmic mean plays a role. For $x , y, z \geq 0$ we set
\begin{align*}
 \mu(x,y,z) = 2 \int_0^1 \int_0^\alpha  x^{1-\alpha} y^{\alpha - \beta} z^\beta \dd \beta \dd \alpha\;.
\end{align*}

The following result gives an explicit expression for $(\Gr(\rho), \rho)\# U$.

\begin{lemma}\label{lem:flat-explicit}
For $\rho \in \Dens_+$ and $\VV_1, \VV_2 \in \Cl^2$ we have 
\begin{align*}
 \rho \flat (\VV_1, \VV_2)
   = \frac18 \sum_{\eps_1, \eps_2, \eps_3 \in \{-1,1\}}
    \frac{  \mu(1+\eps_1 r, 1+ \eps_2 r,  1+ \eps_3 r) }
	    {  \mu(1+\eps_1 r, 1+ \eps_3 r) }
      \rho_{\eps_1 \nn} \VV_1^* \rho_{\eps_2 \mm} \VV_2  \rho_{\eps_3 \nn} 
\end{align*}
\end{lemma}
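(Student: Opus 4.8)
The plan is to carry out, with three spectral indices in place of the two appearing in Lemma~\ref{lem:hat-explicit}, the same diagonalization argument. The integrand in Definition~\ref{def:flat} is a product of three ``blocks'' separated by $\VV_1^*$ and $\VV_2$, namely $\tfrac{\rho^{\alpha-\beta}}{(1-s)I+s\rho}$, then $\Gr(\rho)^{1-\alpha}$, then $\tfrac{\rho^{\beta}}{(1-s)I+s\rho}$, and each block is a single function of $\rho$ or of $\Gr(\rho)$. First I would record that $\Gr(\rho)=I-xQ_1-yQ_2+izQ_1Q_2=\rho_{r\mm}$, since $Q_1,Q_2$ are odd and $Q_1Q_2$ is even; thus $\Gr(\rho)$ has modulus $r$ and direction $\mm$. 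Applying Lemma~\ref{lem:funct-calc} to each block separately, and using $(1-s)+s(1\pm r)=1\pm sr$, gives $\tfrac12\sum_{\eps_1\in\{-1,1\}}\tfrac{(1+\eps_1 r)^{\alpha-\beta}}{1+\eps_1 sr}\rho_{\eps_1\nn}$ for the first block, $\tfrac12\sum_{\eps_2\in\{-1,1\}}(1+\eps_2 r)^{1-\alpha}\rho_{\eps_2\mm}$ for the middle block, and $\tfrac12\sum_{\eps_3\in\{-1,1\}}\tfrac{(1+\eps_3 r)^{\beta}}{1+\eps_3 sr}\rho_{\eps_3\nn}$ for the last block. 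Since $\VV_1^*$ and $\VV_2$ separate the blocks, no product of two spectral projections is ever formed, so there is nothing to simplify at this stage; the contraction over the vector index hidden in $\VV_1^*\,\cdots\,\VV_2$ is carried along passively.

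Multiplying the three expansions, the integrand becomes
\[
 \tfrac18\sum_{\eps_1,\eps_2,\eps_3\in\{-1,1\}}
   \frac{(1+\eps_1 r)^{\alpha-\beta}(1+\eps_2 r)^{1-\alpha}(1+\eps_3 r)^{\beta}}
        {(1+\eps_1 sr)(1+\eps_3 sr)}\,
   \rho_{\eps_1\nn}\VV_1^*\rho_{\eps_2\mm}\VV_2\rho_{\eps_3\nn}\;.
\]
For each fixed sign triple the scalar coefficient is a product of a factor depending only on $s$ and a factor depending only on $(\alpha,\beta)$, so the triple integral in Definition~\ref{def:flat} factors as $\bigl(\int_0^1\tfrac{\dd s}{(1+\eps_1 sr)(1+\eps_3 sr)}\bigr)\bigl(\int_0^1\int_0^\alpha(1+\eps_1 r)^{\alpha-\beta}(1+\eps_2 r)^{1-\alpha}(1+\eps_3 r)^{\beta}\,\dd\beta\,\dd\alpha\bigr)$.

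It remains to evaluate the two scalar integrals. For the $(\alpha,\beta)$-integral I would use that the three-variable logarithmic mean $\mu(x,y,z)=2\int_0^1\int_0^\alpha x^{1-\alpha}y^{\alpha-\beta}z^{\beta}\,\dd\beta\,\dd\alpha$ is a symmetric function of $(x,y,z)$ (as is seen by rewriting it as an integral of $x^p y^q z^t$ over the standard $2$-simplex $p+q+t=1$, or by elementary changes of variable); hence the integral in question equals $\tfrac12\mu(1+\eps_2 r,1+\eps_1 r,1+\eps_3 r)=\tfrac12\mu(1+\eps_1 r,1+\eps_2 r,1+\eps_3 r)$. For the $s$-integral, the substitution $s=1/(1+x)$ turns $\int_0^1\tfrac{\dd s}{(1+\eps_1 sr)(1+\eps_3 sr)}$ into $\int_0^\infty\tfrac{\dd x}{\bigl((1+\eps_1 r)+x\bigr)\bigl((1+\eps_3 r)+x\bigr)}$, which by the computation in the proof of Theorem~\ref{inversion} equals $\mu(1+\eps_1 r,1+\eps_3 r)^{-1}$ (with value $(1+\eps r)^{-1}$ when $\eps_1=\eps_3=\eps$). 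Assembling the three factors $\tfrac12$ from the block expansions, the overall factor $2$ from Definition~\ref{def:flat}, the factor $\tfrac12$ from the $(\alpha,\beta)$-integral, and the reciprocal mean from the $s$-integral, and noting that $2\cdot\tfrac18\cdot\tfrac12=\tfrac18$, produces exactly the claimed identity.

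I do not anticipate a genuine obstacle: this is the bookkeeping already performed for Lemma~\ref{lem:hat-explicit}, now with three indices, so the only real danger is mislaying a constant or an index. The two steps that are not purely mechanical are the permutation symmetry of the three-variable mean $\mu$ — which is what matches the index ordering to the one in the statement — and the substitution $s=1/(1+x)$ that reduces the finite $s$-integral to the integral $\int_0^\infty\tfrac{\dd x}{(a+x)(b+x)}=\mu(a,b)^{-1}$ evaluated in the proof of Theorem~\ref{inversion}.
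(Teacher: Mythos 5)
Your proposal is correct and is essentially the paper's own argument: the paper proves this lemma by exactly the route you describe, namely expanding each of the three functional-calculus blocks in Definition~\ref{def:flat} via Lemma~\ref{lem:funct-calc} (using $\Gr(\rho)=\rho_{r\mm}$) and integrating, with the $s$-integral giving $\mu(1+\eps_1 r,1+\eps_3 r)^{-1}$ and the $(\alpha,\beta)$-integral giving $\tfrac12\mu$ of the three eigenvalues. Your bookkeeping of the constants and your use of the symmetry of the three-variable logarithmic mean are both correct, so nothing is missing.
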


\begin{proof}
This follows using Lemma \ref{lem:funct-calc} and the definition of $\rho \flat (\VV_1, \VV_2)$.
\end{proof}

The identity from the previous lemma allows us to obtain an explicit expression for the second term in  the Hessian of the entropy:

\begin{lemma}\label{lem:Hess-2}
Let $\rho \in \Dens_+$ and let $U \in \Cl$ be of the form $U = u Q_1+ v Q_2 + i w Q_1 Q_2$ for some $u, v, w \in \R$. Furthermore, we 
set  $\xi = \mu(1-r,1-r,1+r)$ and $\eta = \mu(1-r,1+r,1+r)$, and we consider the quantities
\begin{align*}
 \Gamma = \frac{r}{4}\frac{\eta -\xi}{\theta}\;, \qquad
 \Delta = \frac{r}{4} \left( \frac{\xi}{1-r} - \frac{\eta}{1+r}  \right)\;.
\end{align*}
Then, 
\begin{align*}
  - \frac12 \ip{\Num \rho, \rho \flat (\nabla U, \nabla U)}
   = \uu^T N_2(\rho) \uu
\end{align*}
where the right-hand side is a matrix-product with $\uu^T = (u,v,w)$ and
\begin{align*}
N_2(\rho) = \left(\begin{array}{ccc}
  A & 0 & aC \\
  0 & A & bC \\
  aC &  bC & B
  \end{array}\right) \;,
\end{align*}
with 
\begin{align*}
 A &= (1 - c^2) (  (1+c^2) \Delta -2c^2 \Gamma)\;,\\
 B &= (1+c^2)^2 \Delta + 2c^2(1-c^2)\Gamma\;,\\
 C &= c((1+c^2) \Delta + (1-2c^2)\Gamma)\;.
\end{align*}
\end{lemma}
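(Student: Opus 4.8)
The plan is to reduce the claim to a purely finite-dimensional, elementary computation by substituting the spectral decomposition of $\rho$ into the definition of $\rho \flat (\cdot,\cdot)$ (Definition~\ref{def:flat}) via the closed form in Lemma~\ref{lem:flat-explicit}, then pairing against $\Num\rho$ through the $L^2(\tau)$-inner product. First I would record the raw data: $\rho = I + xQ_1 + yQ_2 + izQ_1Q_2$, $\nabla U = uQ_1 + vQ_2$ (since $\nabla$ kills the constant term and $\nabla(iwQ_1Q_2)$ contributes through both $\nabla_1$ and $\nabla_2$ — one must be careful here and compute $\nabla(iwQ_1Q_2) = iw(Q_1\cdot(Q_1Q_2\text{-derivative}))$ componentwise using the defining formulas for $\nabla_i$), and $\Num\rho = \rho - I = xQ_1 + yQ_2 + izQ_1Q_2$ (using that $Q_j$ are degree-one and $Q_1Q_2$ is degree-two eigenvectors of $\Num$ with eigenvalues $1$ and $2$ respectively, so actually $\Num\rho = xQ_1+yQ_2+2izQ_1Q_2$ — this factor of $2$ must be tracked). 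I would also note $\Gr(\rho) = I - xQ_1 - yQ_2 + izQ_1Q_2 = \rho_{\mm'}$-type element, consistent with the $\mm = (-a,-b,c)$ convention already fixed in the text.

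Next I would substitute Lemma~\ref{lem:flat-explicit} directly: $\rho\flat(\nabla U,\nabla U)$ is a sum over $\eps_1,\eps_2,\eps_3 \in \{-1,1\}$ of scalar coefficients $\tfrac18 \mu(1+\eps_1 r, 1+\eps_2 r, 1+\eps_3 r)/\mu(1+\eps_1 r, 1+\eps_3 r)$ times products $\rho_{\eps_1\nn}(\nabla U)^*\rho_{\eps_2\mm}(\nabla U)\rho_{\eps_3\nn}$. Each $\rho_{\pm\nn}$ and $\rho_{\pm\mm}$ is (twice) a rank-one projection, so these eight products are explicitly computable $2\times 2$-block elements of $\Cl\otimes$-contracted form; the products $\rho_{\eps_1\nn}\VV^*\rho_{\eps_2\mm}\VV\rho_{\eps_3\nn}$ collapse using $\rho_{\nn}\rho_{\nn} = 2(1)\rho_\nn$, $\rho_{\nn}\rho_{-\nn}=0$, and the anticommutation relations to move $Q_1,Q_2$ past the projections. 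Then I would take $\langle \Num\rho, \cdot\rangle_{L^2(\tau)} = \tau((\Num\rho)^* \cdot)$, expand $\Num\rho = xQ_1 + yQ_2 + 2izQ_1Q_2 = r(aQ_1 + bQ_2) + 2izQ_1Q_2$, and read off the quadratic form in $(u,v,w)$. The logarithmic-mean bookkeeping is organized by the substitutions $\xi = \mu(1-r,1-r,1+r)$, $\eta = \mu(1-r,1+r,1+r)$ and the derived quantities $\Gamma, \Delta$; one checks that all eight terms in the $\eps$-sum reorganize, after using $\mu(1+r,1+r) = 1+r$, $\mu(1-r,1+r) = \theta$, and the symmetry $\mu(x,y,z) = \mu(z,y,x)$, into precisely the combinations $A, B, C$ as stated.

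The main obstacle, and essentially the only real work, is the bookkeeping: matching the $8$-term $\eps$-sum against the $3\times 3$ matrix $N_2(\rho)$ and verifying that the off-diagonal structure is exactly $(0$ in the $uv$ slot, $aC$ in $uw$, $bC$ in $vw)$ — in particular that the $uv$-coupling vanishes, which reflects the rotational symmetry in the $(Q_1,Q_2)$-plane and should fall out once one writes everything in terms of $a,b,c$ and notices the only surviving cross terms carry an overall $a$ or $b$. I expect the diagonal entries $A$ and $B$ to require the most care, since the $c^2$ and $(1\pm c^2)$ polynomial factors arise from combining the $\eps_2 = \pm 1$ contributions of $\rho_{\eps_2\mm}$, where $\mm = (-a,-b,c)$ flips the sign in the first two coordinates but not the third; tracking that asymmetry between $\mm$ and $\nn$ is where sign errors are most likely. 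I would cross-check the final formula against the $1$-dimensional case (set $b = c = 0$, $w = 0$) and against Remark~\ref{flatrem} (the commutative limit), and against the already-computed $N_1(\rho)$ so that $N_1(\rho) + N_2(\rho)$ combines with $M(\rho)$ in the way the $1$-convexity claim in the next subsection will require; consistency of these checks is what gives confidence that the tedious computation has been done correctly. This is why, as the text says, the proof is "a direct computation" — I would state it as such, carry out the projection-algebra reduction explicitly for one representative $\eps$-triple, and leave the remaining seven to the reader with the assurance that they are identical in nature.
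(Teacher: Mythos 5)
Your overall strategy coincides with the paper's: Lemma \ref{lem:Hess-2} is indeed obtained by nothing more than inserting the explicit formula of Lemma \ref{lem:flat-explicit} into $-\tfrac12\ip{\Num\rho,\ \rho\flat(\nabla U,\nabla U)}$, exploiting the projection algebra of $\tfrac12\rho_{\pm\nn}$, $\tfrac12\rho_{\pm\mm}$, and reorganizing the eight $\eps$-terms through $\theta$, $\xi$, $\eta$, $\Gamma$, $\Delta$ (using $\mu(x,x)=x$, $\mu(1-r,1+r)=\theta$ and the symmetry $\mu(x,y,z)=\mu(z,y,x)$). Your self-correction $\Num\rho = xQ_1+yQ_2+2izQ_1Q_2$ is also exactly the point one must not miss.

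However, one item of your ``raw data'' is wrong as written and would derail the computation if used literally: $\nabla U$ is not $uQ_1+vQ_2$. From the definition of $\nabla_i$ one has $\nabla_1 Q_1 = I$, $\nabla_2 Q_2 = I$, $\nabla_1(Q_1Q_2)=Q_2$, $\nabla_2(Q_1Q_2)=-Q_1$, hence $\nabla U = \big(uI + iwQ_2,\ vI - iwQ_1\big)\in\Cl^2$. Your formula contains two errors: the derivative lowers degree ($\nabla_1(uQ_1)=uI$, not $uQ_1$), and the term $iwQ_1Q_2$ contributes to both components; your parenthetical acknowledges the second point but never states the correct gradient, and the expression ``$\nabla(iwQ_1Q_2)=iw(Q_1\cdot(Q_1Q_2\text{-derivative}))$'' does not pin it down. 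This is not cosmetic: the whole third row and column of $N_2(\rho)$ --- every entry involving $w$, i.e.\ $aC$, $bC$ and $B$ --- arises precisely from the components $iwQ_2$ and $-iwQ_1$ of the gradient, so with $\nabla U = uQ_1+vQ_2$ the resulting quadratic form would be independent of $w$ and could not equal $\uu^T N_2(\rho)\uu$. (A smaller quibble: in the $4\times 4$ representation the projections $\tfrac12\rho_{\pm\nn}$ have trace $\tfrac12$, i.e.\ multiplicity two, not rank one; what you need is only $\rho_{\nn}^2=2\rho_{\nn}$ and $\rho_{\nn}\rho_{-\nn}=0$, which you do state.) With the corrected gradient, the remainder of your plan, including the consistency checks against $M(\rho)$, $N_1(\rho)$, Remark~\ref{flatrem} and the one-dimensional case, does produce the stated matrix, in the same way the paper intends.
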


Now that we have obtained explicit formulas for the metric and the Hessian, we are ready to prove the following result.

\begin{theorem}\label{thm:2D-convex}
For all $\rho \in \Dens_+$ and all selfadjoint elements $U \in \Cl$ we have
\begin{align*} 
 \Hess_\rho S(\nabla U, \nabla U) \geq \| \nabla U\|_\rho^2\;.
\end{align*} 
\end{theorem}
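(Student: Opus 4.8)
The plan is to reduce the inequality to the positive semidefiniteness of an explicit $3\times 3$ matrix, use a rotational symmetry to cut this down to a $2\times 2$ problem, and finally verify the resulting scalar inequalities among (generalized) logarithmic means. Since $\nabla I = 0$, it suffices to treat $U = u Q_1 + v Q_2 + i w Q_1 Q_2$ with $u,v,w \in \R$, so that Lemmas~\ref{lem:metric2d}, \ref{lem:Hess-1} and \ref{lem:Hess-2} apply. Combining these with the formula \eqref{eq:Hess} of Proposition~\ref{prop:Hess}, the claim $\Hess_\rho S(\nabla U, \nabla U) \geq \|\nabla U\|_\rho^2$ is equivalent to
\[
  \uu^T K(\rho) \uu \geq 0 \quad \text{for all } \uu = (u,v,w) \in \R^3, \qquad K(\rho) := N_1(\rho) + N_2(\rho) - M(\rho)\, .
\]
Using $a^2 + b^2 + c^2 = 1$, a direct computation from the three lemmas shows that
\[
  K(\rho) = \begin{pmatrix} A & 0 & a F \\ 0 & A & b F \\ a F & b F & E \end{pmatrix}, \qquad
  F := C + \tfrac12 (\theta - 1) c, \qquad E := B + (1 - c^2) + \theta(1 + c^2),
\]
with $A,B,C$ as in Lemma~\ref{lem:Hess-2}. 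Thus everything reduces to showing $K(\rho) \geq 0$.

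The matrix $K(\rho)$ depends on $(a,b)$ only through $a^2+b^2 = 1-c^2$ and the vector $(a,b)$ itself, reflecting the $SO(2)$-invariance of the problem under simultaneous rotation of $(Q_1,Q_2)$ and of $(a,b)$. Concretely, if $c^2 < 1$ then $(-b, a, 0)$ is an eigenvector of $K(\rho)$ with eigenvalue $A$, and on its orthogonal complement, in the orthonormal basis $\{ (1-c^2)^{-1/2}(a,b,0),\ (0,0,1) \}$, the operator $K(\rho)$ is represented by $\begin{pmatrix} A & \sqrt{1-c^2}\, F \\ \sqrt{1-c^2}\, F & E \end{pmatrix}$ (the degenerate case $c^2 = 1$ has $a = b = 0$ and $K(\rho)$ already diagonal). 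Hence $K(\rho) \geq 0$ if and only if
\[
  A \geq 0, \qquad E \geq 0, \qquad A E \geq (1 - c^2) F^2\, .
\]

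For each fixed $c \in [-1,1]$, the quantities $A, E, F$ are explicit expressions in $r \in (0,1)$, built (through $\Gamma$ and $\Delta$) from $\theta = \mu(1-r,1+r)$, $\xi = \mu(1-r,1-r,1+r)$ and $\eta = \mu(1-r,1+r,1+r)$, with polynomial dependence on $c^2$. Monotonicity of $\mu(\cdot,\cdot)$ and $\mu(\cdot,\cdot,\cdot)$ in each argument gives at once $\Gamma \geq 0$ and $\eta \geq \xi$; substituting $r = \tanh t$ makes $\theta,\xi,\eta$ elementary functions of $t$, from which $A \geq 0$ and $E \geq 0$ follow by elementary estimates. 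For the last inequality, clearing denominators turns $AE \geq (1-c^2)F^2$ into a polynomial inequality in $c^2$ whose coefficients are explicit functions of $t$; these are controlled by sharp comparisons among $\theta,\xi,\eta$, again reducible to one-variable estimates (power series and monotonicity).

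The main obstacle is exactly the determinant condition $AE \geq (1-c^2)F^2$. Unlike the classical case over $\R^n$, where the Bochner identity rewrites $\Hess_\rho H$ as $\int_{\R^n} \|\Hess U\|^2 \rho \, \dd x$ and makes positivity manifest, here $\Hess_\rho S - \|\cdot\|_\rho^2$ admits no obvious sum-of-squares form, so the inequality must be teased out by hand from estimates on the generalized logarithmic means. Restricting to $n = 2$ is precisely what keeps this computation within reach.
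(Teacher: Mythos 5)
Your reduction is correct and is in fact the same as the paper's: the matrix you call $K(\rho)$ is exactly the paper's $P(\rho)=N_1(\rho)+N_2(\rho)-M(\rho)$ (your $F$ and $E$ coincide with the paper's $\tilde C$ and $\tilde B$), and your rotation/eigenvector argument just makes explicit the ``elementary computation'' by which the paper reduces positivity to the three conditions $A\geq 0$, $E\geq 0$, $AE\geq (1-c^2)F^2$. Up to that point there is nothing to object to.

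The genuine gap is that you never actually verify these three conditions; you only assert that they ``follow by elementary estimates'' and that the determinant condition is ``controlled by sharp comparisons among $\theta,\xi,\eta$, again reducible to one-variable estimates (power series and monotonicity).'' That is precisely where the content of the theorem lies, and it is not routine. Note first that even $A\geq 0$ needs $2c^2\Gamma\leq(1+c^2)\Delta$, which requires the comparison $2\Gamma\leq\Delta$, not merely $\Gamma\geq 0$ and $\eta\geq\xi$ as you suggest. In the paper the determinant inequality is obtained by the specific algebraic split $\tilde A\tilde B-\tilde C^2(a^2+b^2)=(1-c^2)(\mathcal A+\mathcal B)$, with $\mathcal A$ and $\mathcal B$ each shown nonnegative using the two scalar inequalities $0\leq 2\Gamma\leq\Delta$ and $(1-\theta)^2\leq 4\Delta$; and those scalar inequalities are themselves proved in Proposition \ref{prop:essential} by nontrivial manipulations -- the homogeneity of the logarithmic means to reduce $2\Gamma\leq\Delta$ to $\sqrt{1-r^2}\leq\theta/\sqrt{2-\theta}$, and the identity $\Delta=\tfrac{\theta}{4}\bigl(\tfrac{2\theta}{1-r^2}-2\bigr)$ together with the geometric--logarithmic mean inequality $\sqrt{1-r^2}\leq\theta$ for the second. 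Your proposal supplies neither the decomposition in $c^2$ nor the one-variable inequalities, so as written it does not prove the statement; to complete it you would have to either reproduce something equivalent to Proposition \ref{prop:essential} and the $\mathcal A+\mathcal B$ split, or genuinely carry out the $r=\tanh t$ analysis you gesture at, coefficient by coefficient in $c^2$.
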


\begin{proof}
It follows directly from Lemmas \ref{lem:metric2d}, \ref{lem:Hess-1}, and \ref{lem:Hess-2} that for $\rho$ and $U$ as in these lemmas, 
\begin{align*}
 \Hess_\rho S(\nabla U, \nabla U) - \| \nabla U\|_\rho^2
   = \uu^T P(\rho) \uu \;,
\end{align*}
where
\begin{align*}
 P(\rho)  =  N_1(\rho) +  N_2(\rho) - M(\rho)  
   = 
   \left(\begin{array}{ccc}
  {\tilde A} & 0 & a{\tilde C} \\
  0 & {\tilde A} & b{\tilde C}\\
  a{\tilde C} & b{\tilde C} & {\tilde B}
  \end{array}\right) \;,
\end{align*}   
where  
\begin{align*}
 {\tilde A} &= A =  (1 - c^2) (  (1+c^2) \Delta -2c^2 \Gamma)\;,\\
 {\tilde B} &= (1-c^2) + \theta(1+c^2) + (1+c^2)^2 \Delta + 2c^2(1-c^2)\Gamma\;,\\
  \tilde C &=  c\Big( \frac12 (\theta -1) + (1+c^2) \Delta + (1-2c^2)\Gamma \Big)\;.
\end{align*}
An elementary computation shows that a matrix of this form is positive definite
if and only if ${\tilde A} \geq 0$, ${\tilde B} \geq 0$, and
\begin{align}\label{eq:xw}
 {\tilde A}{\tilde B} \geq {\tilde C}^2 (a^2 + b^2 ) \;.
\end{align}
The proof of these inequalities relies on the following one-dimensional inequalities, which shall be proved in Proposition \ref{prop:essential} below:
\begin{align}\label{eq:easy}
0\leq 2 \Gamma &\leq \Delta\;,\\
\label{eq:difficult}
 (1-\theta)^2 &\leq 4 \Delta\;.
\end{align}
In fact, the non-negativity of ${\tilde A}$ and ${\tilde B}$ follows immediately from  \eqref{eq:easy}. 
In order to prove \eqref{eq:xw} we write
\begin{align*}
 {\tilde A}{\tilde B} - \tilde C^2(a^2 + b^2)
  & = (1-c^2) (\mathcal{A} + \mathcal{B})
\end{align*}
where 
\begin{align*}
\mathcal{A} &= (1+c^2)^2\Delta^2 - c^2 \Gamma^2 - 2c^2(1+c^2)\Gamma \Delta\;,\\
\mathcal{B} &=  (1+c^2)(1+\theta)\Delta - c^2(1+3\theta)\Gamma - \frac14c^2(1-\theta)^2\;.
\end{align*}
Using \eqref{eq:easy} we infer that
\begin{align*}
\mathcal{A} & = (c^4 + c^2)\Delta(\Delta-2\Gamma)
			 + (1+\frac34c^2)\Delta^2
			 + c^2( \frac14 \Delta^2 - \Gamma^2)
	     \\ &\geq 0\;.
\end{align*}
Furthermore, taking into account that $0 \leq \theta \leq 1$, using \eqref{eq:easy} once more, and finally \eqref{eq:difficult}, we obtain
\begin{align*}
\mathcal{B} &=
(1+\theta)\Delta - \frac14c^2(1-\theta)^2
+ c^2 ( (1+\theta)\Delta  - (1+3\theta)\Gamma  )
\\& \geq \Delta -  \frac14 (1-\theta)^2
 + c^2  (1+\theta) (\Delta  - 2 \Gamma  )
\\& \geq \Delta -  \frac14 (1-\theta)^2
\\& \geq 0\;,
\end{align*}
which completes the proof.
\end{proof}

The following one-dimensional inequalities were essential in the proof of Theorem \ref{thm:2D-convex}.

\begin{proposition}\label{prop:essential}
For $-1\leq r \leq 1$ we set $\theta = \mu(1-r,1+r)$ and 
\begin{align*}
   \xi = \mu(1-r,1-r,1+r)\;,\qquad
  \eta = \mu(1-r,1+r,1+r)\;.
\end{align*}
Then the quantities
\begin{align*}
 \Gamma = \frac{r}{4}\frac{\eta -\xi}{\theta} \qquad \text{and}\qquad
 \Delta = \frac{r}{4} \left( \frac{\xi}{1-r} - \frac{\eta}{1+r}  \right)
\end{align*}
satisfy the following inequalities:
\begin{align}\label{eq:easy-lem}
0\leq 2 \Gamma &\leq \Delta\;,\\
\label{eq:difficult-lem}
 (1-\theta)^2 &\leq 2 \Delta\;.
\end{align}
\end{proposition}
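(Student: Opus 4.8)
The plan is to reduce everything to explicit one-variable functions of $r$ and then verify the two inequalities by elementary calculus. First I would compute the three means explicitly. Writing $u = 1-r$ and $v = 1+r$, we have $\theta = \mu(u,v) = \frac{v-u}{\log v - \log u} = \frac{r}{\arctanh r}$, and for the triple means one uses $\mu(x,y,z) = 2\int_0^1\int_0^\alpha x^{1-\alpha}y^{\alpha-\beta}z^\beta\,\dd\beta\,\dd\alpha$. Carrying out the inner integral in $\beta$ gives, when the arguments are distinct,
\[
 \mu(x,y,z) = \frac{2}{\log y - \log z}\left(\frac{y-x}{\log y - \log x} - \frac{z-x}{\log z - \log x}\right)\;,
\]
i.e.\ a divided difference of the map $w \mapsto \frac{w-x}{\log w - \log x}$. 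Specializing to $\xi = \mu(u,u,v)$ and $\eta = \mu(u,v,v)$ (which are limiting cases where two arguments coincide, handled by l'Hôpital), I would obtain closed forms for $\xi$ and $\eta$, hence for $\Gamma$ and $\Delta$, as explicit functions of $r$ involving only $r$ and $L := \arctanh r = \frac12\log\frac{1+r}{1-r}$. At this stage $\Gamma$ and $\Delta$ are smooth, even functions of $r$ on $(-1,1)$, so it suffices to treat $r \in [0,1)$, and one may as well substitute $r = \tanh L$ and view everything as a function of $L \in [0,\infty)$, which tends to make the expressions rational in $e^{\pm 2L}$ or in $\sinh, \cosh$.

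Next I would prove \eqref{eq:easy-lem}. Non-negativity $\Gamma \geq 0$ should amount to $\eta \geq \xi$, which follows from monotonicity of $\mu$ in its arguments ($v > u$). For $2\Gamma \leq \Delta$, after clearing denominators this becomes a single inequality between explicit functions of $L$; I expect both sides to vanish to the same order as $L \to 0$ (all these means degenerate to the common value $1$ there, so $\Gamma, \Delta = O(r^2)$), so the cleanest route is to expand both sides in powers of $r$ (or $L$) and show the difference $\Delta - 2\Gamma$ has a power series with non-negative coefficients, or else to show its derivative has a fixed sign. A Taylor expansion check near $r = 0$ fixes the leading behavior; global positivity on $(0,1)$ I would get by writing $\Delta - 2\Gamma$ over a common positive denominator and checking the numerator is a positive combination of terms like $L\cosh L - \sinh L \geq 0$, $\sinh L - L \geq 0$, etc.

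Then I would prove \eqref{eq:difficult-lem}, $(1-\theta)^2 \leq 2\Delta$, which the authors flag (note it is even stated with constant $4$ in the body of Theorem~\ref{thm:2D-convex} but $2$ here, so the sharper bound is the one to prove). Again substitute $\theta = r/L$, so $1 - \theta = (L - r)/L = (\arctanh r - r)/\arctanh r$, and express $\Delta$ explicitly; multiplying through by $L^2$ (positive) reduces to a polynomial-type inequality in $r$ and $L$ with $r = \tanh L$. Expanding near $r = 0$: $1 - \theta \sim r^2/3$ so $(1-\theta)^2 \sim r^4/9$, and I would expect $\Delta \sim c\, r^4$ with $2c \geq 1/9$, i.e.\ a strict inequality in the leading order, which is encouraging. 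For the global bound the strategy is the same as above: put $2\Delta - (1-\theta)^2$ over a common denominator that is manifestly positive and show the numerator is non-negative for $L \geq 0$, e.g.\ by exhibiting it as a sum of products of the standard nonnegative elementary pieces ($L\cosh L - \sinh L$, $\sinh L - L$, $\sinh L \cosh L - L$, $L^2$, etc.), or, failing a clean decomposition, by differentiating: show the function vanishes to appropriate order at $0$ and has non-negative derivative thereafter. \emph{The main obstacle is precisely this last inequality} \eqref{eq:difficult-lem}: unlike the "easy" one it is not a straightforward consequence of monotonicity of means, the two sides are close in leading order, and finding the right positive-combination certificate (or a clean monotonicity argument after enough differentiations) is where the real work lies. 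If a slick algebraic decomposition is not forthcoming, a safe fallback is to prove it by a monotonicity/convexity argument on $[0,\infty)$ in the variable $L$ together with the verified Taylor behavior at the endpoint $L = 0$, which suffices since all functions involved are analytic and the claimed inequality is non-strict.
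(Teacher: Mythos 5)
Your overall strategy (pass to explicit one-variable expressions in $r$ and $L=\arctanh r$, then verify two elementary inequalities) is viable and in fact close in spirit to what the paper does, but as written it is a plan rather than a proof: the two decisive steps are never carried out. Concretely, (i) you never compute the closed forms for $\xi$ and $\eta$ on which everything else rests; doing so (e.g.\ from your own divided-difference formula, with $\log\frac{1+r}{1-r}=2\arctanh r$) gives $\xi=\theta(\theta-1+r)/r$ and $\eta=\theta(1+r-\theta)/r$, hence the exact identities $\Gamma=\tfrac12(1-\theta)$ and $\Delta=\tfrac{\theta}{2}\bigl(\tfrac{\theta}{1-r^2}-1\bigr)$, after which both \eqref{eq:easy-lem} and \eqref{eq:difficult-lem} become elementary statements about $\theta$ and $\sqrt{1-r^2}$; (ii) neither $2\Gamma\le\Delta$ nor $(1-\theta)^2\le 2\Delta$ is actually established — you defer both to an unexhibited ``positive-combination certificate'' or an unspecified monotonicity argument, and you yourself flag that this is ``where the real work lies''; and (iii) the one quantitative check you do offer for the difficult inequality is incorrect: $\Delta$ is not of order $r^4$ near $r=0$ but satisfies $\Delta=\tfrac{r^2}{3}+O(r^4)$ (consistent with your earlier remark that $\Delta=O(r^2)$), so \eqref{eq:difficult-lem} is actually slack at leading order, while it is the ``easy'' inequality $2\Gamma\le\Delta$ that is tight to order $r^2$ (both sides equal $\tfrac{r^2}{3}+O(r^4)$). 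This error is harmless for the truth of the statement but shows the computations your plan depends on were not performed, which is precisely the gap.

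For comparison, the paper avoids any series or monotonicity analysis: for \eqref{eq:easy-lem} it uses the $1$-homogeneity of $\mu$ and the identity $\mu(1,1,c)/\mu(1,1,c^{-1})=\bigl(\tfrac{\theta}{1-r}-1\bigr)/\bigl(1-\tfrac{\theta}{1+r}\bigr)$ with $c=\tfrac{1+r}{1-r}$ to reduce the claim to $\sqrt{1-r^2}\le\theta/\sqrt{2-\theta}$; for \eqref{eq:difficult-lem} it uses the closed form $\Delta=\tfrac{\theta}{4}\bigl(\tfrac{2\theta}{1-r^2}-2\bigr)$ and two applications of the geometric--logarithmic mean inequality $\sqrt{1-r^2}\le\theta\le1$ to reduce the claim to $\theta(1-\theta)^2\le\theta-\theta^2$, i.e.\ to $\theta\le1$. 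To complete your argument you would need to supply reductions of this kind (or an explicit certificate in $L$, which you only conjecture exists) and then verify the resulting elementary inequalities; until then the proposal does not prove the proposition.
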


\begin{proof}
The first inequality from \eqref{eq:easy-lem} is clear from the monotonicity of $\mu$.
It follows from the 1-homogeneity of $\mu$ that the second inequality in \eqref{eq:easy-lem} can be reformulated as
\begin{align}\label{eq:equiv1}
      \Big(1 + \frac{2(1+r)}{\theta}\Big) \mu(1,1,c^{-1})
 \leq \Big(1 + \frac{2(1-r)}{\theta}\Big) \mu(1,1,c)\;,
\end{align}
where $c = \frac{1+r}{1-r}$. Using the identity
\begin{align*}
 \frac{\mu(1,1,c)}{\mu(1,1,c^{-1})}
   = \frac{\frac{\theta}{1-r} - 1}{1 - \frac{\theta}{1+r}}
\end{align*}
it follows that \eqref{eq:equiv1} is equivalent to 
\begin{align}\label{eq:log-geom}
G \leq \frac{\theta}{\sqrt{2-\theta}}\;,
\end{align}
where $G = \sqrt{1-r^2}$ is the geometric mean of $1-r$ and $1+r$.
Since \eqref{eq:log-geom} is readily checked, we obtain \eqref{eq:easy-lem}.

In order to prove \eqref{eq:difficult-lem}, we use the identity
\begin{align*}
 \Delta = \frac{\theta}{4}\bigg( \frac{2\theta}{1-r^2} - 2 \bigg)\;.
\end{align*}
Therefore the inequality \eqref{eq:difficult-lem} is equivalent to 
\begin{align*}
  (1-\theta)^2 \leq {\theta}\bigg( \frac{\theta}{1-r^2} - 1 \bigg)\;.
\end{align*}
In view of the geometric-logarithmic mean inequality $\sqrt{1-r^2} \leq \theta$, it suffices to show that
\begin{align*}
  \theta(1-\theta)^2 \leq  {\theta} - 1 + r^2 \;.
\end{align*}
By another application of this inequality, it even suffices to show that
\begin{align*}
   \theta(1-\theta)^2 \leq  {\theta} - \theta^2 \;,
\end{align*}
which reduces to $\theta \leq 1$. This inequality holds by the concavity of $\theta$, hence the proof is complete.
\end{proof}

 \appendix
\section{Some identities from non-commutative calculus}

Throughout this section we let $\Alg$ be the collection of $m \times m$-matrices with complex entries. The subset of self-adjoint elements shall be denoted by $\Alg_h$, and we let $\Alg_+$ be the collection of strictly positive elements in $\Alg$. 

For $x,y,z \in \Alg$ we consider the contraction operation $* : (\Alg \ot \Alg) \times \Alg \to \Alg$ defined by
 \begin{align}\label{eq:contr}
  (x \ot y)\contr z &:= xzy\;,
   \end{align}
and linear extension.

For a smooth function $f : (0,\infty) \to \R$ we define
 \begin{align*}
    \partial f(\lambda,\mu) := \left\{ \begin{array}{ll}
   \frac{f(\lambda) - f(\mu)}{\lambda-\mu}, \quad & \lambda \neq \mu\;,\\
  f'(\lambda),        \quad &\lambda = \mu\;.
\end{array} \right.
 \end{align*}
Let $X, Y \in \Alg_+$ with spectral decomposition 
 $X = \sum_{j=1}^m \lambda_j \hat x_j$ and $Y = \sum_{k=1}^m \mu_k \hat y_k$ for some $\lambda_j, \mu_k > 0$ and projections $\hat x_j, \hat y_k$ with 
$\sum_{j=1}^m \hat x_j = \sum_{k=1}^m \hat y_k = I$.
We define the \emph{non-commutative derivative} of $f$ as
 \begin{align*}
    \ncd f(X,Y) = \sum_{j,k =1}^m\partial f(\lambda_j,\mu_k)
    \hat x_j \ot \hat y_k\;.
  \end{align*}
  
The relevance of $\ncd f(X,Y)$ is due to the fact that it allows to formulate suitable versions of the chain rule in a non-commutative setting.

 \begin{proposition}\label{prop:chain-rule}
Let $f : (0,\infty) \to \R$ be a smooth function.
  \begin{enumerate}[(1)]
  \item(Discrete chain rule)
For $X, Y \in \Alg_+$ we have
 \begin{align} \label{eq:chain-diff-disc}
  f(X) - f(Y)  = \ncd f(X,Y) \contr (X - Y)\;.
 \end{align}
 \item(Chain rule)
For a smooth curve $t \mapsto X(t) \in \Alg_+$ we have
 \begin{align} \label{eq:chain-diff}
  \diff{}{t} f(X(t))  = \ncd f(X(t),X(t)) \contr X'(t)\;.
 \end{align}
 \end{enumerate}
 \end{proposition}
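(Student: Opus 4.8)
The plan is to prove the discrete chain rule (1) by a direct spectral computation and then to deduce the differential chain rule (2) from it by a limiting argument. For (1), I would fix spectral decompositions $X = \sum_{j} \lambda_j \hat x_j$ and $Y = \sum_k \mu_k \hat y_k$ as in the statement, and expand the right-hand side. Since $X \hat x_j = \lambda_j \hat x_j$ and $\hat y_k Y = \mu_k \hat y_k$, the definition of the contraction gives
\begin{align*}
 \ncd f(X,Y) \contr (X-Y)
   = \sum_{j,k} \ncd f(\lambda_j,\mu_k)\, \hat x_j (X-Y) \hat y_k
   = \sum_{j,k} \ncd f(\lambda_j,\mu_k)(\lambda_j - \mu_k)\, \hat x_j \hat y_k\;.
\end{align*}
The only thing to observe is the elementary identity $\ncd f(\lambda,\mu)(\lambda-\mu) = f(\lambda) - f(\mu)$, which holds by definition when $\lambda \neq \mu$ and holds trivially, both sides being $0$, when $\lambda = \mu$. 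Substituting it and using $\sum_j \hat x_j = \sum_k \hat y_k = I$ collapses the double sum to $\big(\sum_j f(\lambda_j) \hat x_j\big) - \big(\sum_k f(\mu_k) \hat y_k\big) = f(X) - f(Y)$, which is (1). This step is routine.

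For (2), I would fix $t$, apply (1) with $X(t+h)$ in place of $X$ and $X(t)$ in place of $Y$, and divide by $h$:
\begin{align*}
 \frac{f(X(t+h)) - f(X(t))}{h}
   = \ncd f\big(X(t+h), X(t)\big) \contr \frac{X(t+h) - X(t)}{h}\;.
\end{align*}
Since $\frac{X(t+h)-X(t)}{h} \to X'(t)$ as $h \to 0$, everything reduces to showing that $\ncd f(A,B) \contr Z \to \ncd f\big(X(t),X(t)\big) \contr Z$ whenever $(A,B) \to \big(X(t),X(t)\big)$ in $\Alg_+ \times \Alg_+$, with $Z \in \Alg$ fixed. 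I expect this continuity to be the main obstacle, since the spectral projections $\hat x_j, \hat y_k$ are not continuous functions of $A, B$ when eigenvalues coalesce, so one cannot pass to the limit term by term.

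I would circumvent this as follows. Because $t$ may be restricted to a compact interval, all operators occurring above have spectra inside a fixed compact interval $K \subset (0,\infty)$. When $p$ is a polynomial, the formula $\ncd (x^m)(\lambda,\mu) = \sum_{l=0}^{m-1} \lambda^l \mu^{m-1-l}$ shows that $\ncd p(A,B) \contr Z$ is a finite sum of terms of the form $A^l Z B^{r}$, hence jointly continuous in $(A,B,Z)$; and for any smooth $g$ the spectral formula for $\ncd g$ yields the Hilbert--Schmidt bound
\begin{align*}
 \big\| \ncd g(A,B) \contr Z \big\|_{2}
   \leq \Big( \sup_{\lambda,\mu \in K} \big| \ncd g(\lambda,\mu) \big| \Big) \, \| Z \|_{2}
   \leq \| g' \|_{C(K)} \, \| Z \|_{2}\;,
\end{align*}
the last step using $\ncd g(\lambda,\mu) = \int_0^1 g'\big((1-s)\mu + s\lambda\big) \dd s$ together with the convexity of $K$. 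Approximating $f$ by polynomials in the $C^1(K)$-norm and combining these two facts gives the required continuity of $(A,B) \mapsto \ncd f(A,B) \contr Z$, and hence (2); the same estimates make the convergence uniform for $t$ in the compact interval, so that $t \mapsto f(X(t))$ is genuinely $C^1$ with the stated derivative.
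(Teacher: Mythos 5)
Your proof is correct and follows essentially the same route as the paper: part (1) is the same spectral computation (the paper expands $f(X)-f(Y)$ over the products $\hat x_j\hat y_k$ and uses $\partial f(\lambda_j,\mu_k)(\lambda_j-\mu_k)=f(\lambda_j)-f(\mu_k)$, exactly the identity you invoke in reverse), and part (2) is obtained by passing to the limit in (1), just as in the paper. The only difference is that you carefully justify the limit passage (polynomial approximation plus the bound $|\partial f(\lambda,\mu)|\leq \|f'\|_{C(K)}$), a step the paper leaves implicit.
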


 \begin{proof}
To prove \eqref{eq:chain-diff-disc}, we write\begin{align*}
 f(X) - f(Y)
  & = \sum_{j,k=1}^{m} (f(\lambda_{j}) - f(\mu_{k})) \hat x_{j}\hat y_{k}
  \\&  = \sum_{j,k=1}^m \partial f(\lambda_j,\mu_k)(\lambda_{j} - \mu_{k})
       \hat x_j \hat y_k
  \\&  = \sum_{j,k=1}^m \partial f(\lambda_j,\mu_k)
       \hat x_j \ot  \hat y_k 
       \contr \bigg(\sum_{l,p=1}^m (\lambda_l - \mu_p) 
       \hat x_l  \hat y_p\bigg)
  \\& =  \ncd f(X,Y) \contr (X - Y)\;,
\end{align*}
where we used that $\hat x_j \hat x_l \hat y_p \hat y_k = \delta_{jl}\delta_{pk}\hat x_j \hat y_k$.

The identity \eqref{eq:chain-diff} is obtained by passing to the limit in \eqref{eq:chain-diff-disc}.
 \end{proof} 

It will be useful to compute the non-commutative derivatives of some frequently occurring functions. 
 
\begin{proposition}\label{prop:identities}
For $A,B \in \Alg_+$ we have 
  \begin{align*}
 \ncd[t \mapsto t^n](A,B)
   &= \sum_{j=0}^{n-1} A^{n-j-1} \ot B^{j}\;, \quad\quad\qquad\quad\qquad\qquad\qquad n =1,2,\ldots\;,\\
 \ncd[t \mapsto t^\alpha](A,B)
   &= \int_0^1 \int_0^\alpha 
  \frac{A^{\alpha - \beta}}{(1-s)I + s A}  \ot  
    \frac{B^\beta}{(1-s)I + s B}  \dd \beta \dd s\;, \quad \alpha \in (0,1)\;,\\
 \ncd\exp(A,B)
   & = \int_0^1 e^{(1-s)A} \ot e^{sB} \dd s\;,\\
   \ncd\log(A,B)
    & = \int_0^1
     ((1-s)I + s A )^{-1} \ot ((1-s)I + s B )^{-1} \dd s\;.
 \end{align*}
\end{proposition}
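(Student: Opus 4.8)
The plan is to reduce all four identities to elementary scalar identities by diagonalising. Fix spectral decompositions $A = \sum_{j} \lambda_j \hat x_j$ and $B = \sum_k \mu_k \hat y_k$ with $\lambda_j, \mu_k > 0$. Every operator occurring on the right-hand sides --- the powers $A^p$, the resolvents $((1-s)I + sA)^{-1}$, the exponentials $e^{sA}$, and the analogues with $B$ --- is a function of $A$ alone (resp.\ $B$ alone), hence diagonal in the projections $\hat x_j$ (resp.\ $\hat y_k$); since $\Alg$ is finite dimensional the integrals over $s$ and $\beta$ commute with the finite spectral sums. Thus each right-hand side has the form $\sum_{j,k} c(\lambda_j,\mu_k)\, \hat x_j \ot \hat y_k$, where $c(\lambda,\mu)$ is the corresponding scalar integral, and comparing with the definition $\ncd f(A,B) = \sum_{j,k} \partial f(\lambda_j,\mu_k)\, \hat x_j \ot \hat y_k$ it suffices to verify, for all $\lambda,\mu > 0$, the scalar identity $\partial f(\lambda,\mu) = c(\lambda,\mu)$. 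We treat the off-diagonal case $\lambda \neq \mu$ directly, and obtain the diagonal case $\lambda = \mu$ either by explicit evaluation or by letting $\mu \to \lambda$.

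For $f(t) = t^n$ this is the geometric-sum identity $\frac{\lambda^n - \mu^n}{\lambda - \mu} = \sum_{j=0}^{n-1} \lambda^{n-1-j}\mu^j$, both sides reducing to $n\lambda^{n-1}=\partial f(\lambda,\lambda)$ when $\lambda = \mu$. For $f = \log$, writing $(1-s) + s\lambda = 1 + s(\lambda-1)$ and using partial fractions in $s$,
\[
 \int_0^1 \frac{\dd s}{(1+s(\lambda-1))(1+s(\mu-1))}
  = \frac{1}{\lambda - \mu}\Big[ \log(1+s(\lambda-1)) - \log(1+s(\mu-1)) \Big]_{s=0}^{s=1}
  = \frac{\log\lambda - \log\mu}{\lambda - \mu}\;,
\]
and the case $\lambda = \mu$ follows from $\int_0^1 (1+s(\lambda-1))^{-2}\dd s = 1/\lambda = \partial\log(\lambda,\lambda)$. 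For $f = \exp$, since scalars commute, $\int_0^1 e^{(1-s)\lambda} e^{s\mu}\dd s = \int_0^1 e^{\lambda + s(\mu - \lambda)}\dd s = \frac{e^\mu - e^\lambda}{\mu - \lambda} = \partial\exp(\lambda,\mu)$, with the obvious diagonal limit $e^\lambda$.

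The power $f(t) = t^\alpha$ with $\alpha \in (0,1)$ requires only one extra observation: in the scalar integrand $\frac{\lambda^{\alpha-\beta}\mu^\beta}{((1-s)+s\lambda)((1-s)+s\mu)}$ the factor $\lambda^{\alpha-\beta}\mu^\beta$ depends on $\beta$ only and the resolvent factor on $s$ only, so the double integral factorises:
\[
  \Big( \int_0^\alpha \lambda^{\alpha-\beta}\mu^\beta \dd\beta \Big)
  \Big( \int_0^1 \frac{\dd s}{((1-s)+s\lambda)((1-s)+s\mu)} \Big)
  = \frac{\mu^\alpha - \lambda^\alpha}{\log\mu - \log\lambda} \cdot \frac{\log\lambda - \log\mu}{\lambda - \mu}
  = \frac{\lambda^\alpha - \mu^\alpha}{\lambda - \mu}\;,
\]
using the elementary evaluation $\int_0^\alpha \lambda^{\alpha-\beta}\mu^\beta\dd\beta = (\mu^\alpha - \lambda^\alpha)/\log(\mu/\lambda)$ together with the $\log$ identity just established; the diagonal case again gives $\alpha\lambda^{\alpha-1} = \partial[t\mapsto t^\alpha](\lambda,\lambda)$. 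No step presents a genuine obstacle here: the only points needing care are the bookkeeping that legitimises passing from operators to scalars --- immediate in finite dimensions --- and the separate, routine treatment of the diagonal terms $\lambda = \mu$.
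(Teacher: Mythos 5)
Your proposal is correct and follows essentially the same route as the paper: both reduce the operator identities to the scalar identities $\partial f(\lambda,\mu)=c(\lambda,\mu)$ via the spectral decompositions of $A$ and $B$, and then verify those scalar identities (you simply spell out the elementary integrations that the paper leaves as stated). No gaps; the diagonal cases and the factorisation of the double integral for $t\mapsto t^\alpha$ are handled correctly.
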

 
\begin{proof}
This follows from the following elementary identities, which hold for $\lambda, \mu > 0$:
\begin{align*}
   \ncd[t \mapsto t^n](\lambda,\mu)
    & = \sum_{l=0}^{n-1} \lambda^{n-l-1}\mu^l\;,\quad\quad\quad\qquad\qquad\qquad\qquad\qquad n =1,2,\ldots,\\
 \ncd[t \mapsto t^\alpha](\lambda,\mu)
   &= \int_0^1 \int_0^\alpha 
  \frac{\lambda^{\alpha - \beta}\mu^\beta}{((1-s) + s \lambda)((1-s) + s \mu)} 
 \dd \beta \dd s\;, \qquad \alpha \in (0,1)\;,\\
   \ncd\exp(\lambda, \mu)
    & = \int_0^1  e^{(1-t)  \lambda + t\mu} \dd s\;,\\
   \ncd\log(\lambda, \mu)
    & = \int_0^1 \frac{1}{
     ((1-s) + s \lambda) ((1-s) + s \mu )}  \dd s\;.
\end{align*}
\end{proof}

\medskip
 \noindent{\bf Acknowledgement}  This work was begun when both authors were visiting the Institute of Pure and Applied Mathematics at U.C.L.A. They would like to thank I.P.A.M. for its hospitality and support. 

\bigskip

\bibliographystyle{plain}

 \end{document}